\def\f        {{\boldsymbol f}}
\def\g        {{\boldsymbol g}}
\def\n        {{\boldsymbol n}}
\def\u        {{\boldsymbol u}}
\def\v        {{\boldsymbol v}}
\def\w        {{\boldsymbol w}}
\def\x        {{\boldsymbol x}}
\def\y        {{\boldsymbol y}}
\def\W        {{\boldsymbol W}}
\def\L        {{\boldsymbol L}}
\def\H        {{\boldsymbol H}}
\def\V        {{\boldsymbol V}}
\def\ds{{\rm d}s}
\def\dt{{\rm d}t}
\def\R {{\mathds R}}
\def\N {{\mathds N}}
\newtheorem{theorem}{Theorem}[section]
\newtheorem{lemma}[theorem]{Lemma}
\newtheorem{corollary}[theorem]{Corollary}
\newtheorem{definition}[theorem]{Definition}
\newtheorem{remark}[theorem]{Remark}
\begin{document}

\title[Convergence to suitable weak solutions for a subgrid FEM model]{Convergence to suitable weak solutions for a finite element approximation of the Navier-Stokes equations with numerical subgrid scale modeling}

\author[S. Badia]{ Santiago Badia$^\dag$}

\thanks{$\dag$ Universitat Polit\`ecnica de Catalunya, Jordi Girona1-3, Edifici C1, E-08034 Barcelona $\&$ Centre Internacional de M\`etodes Num\`erics en Enginyeria, Parc Mediterrani de la Tecnologia, Esteve Terrades 5, E-08860 Castelldefels, Spain E-mail: {\tt sbadia@cimne.upc.edu}.  SB was partially supported by by the European Research Council under the FP7 Program Ideas through the Starting Grant No. 258443 - COMFUS: Computational Methods for Fusion Technology and the FP7 NUMEXAS project under grant agreement 611636. SB gratefully acknowledges the support received from the Catalan Government through the ICREA Acad\`emia Research Program. }

\author[J. V. Gutiérrez-Santacreu]{Juan Vicente Guti\'errez-Santacreu$^\ddag$}
\thanks{$\ddag$ Dpto. de Matemática Aplicada I, E. T. S. I. Informática, Universidad de Sevilla. Avda. Reina Mercedes, s/n. E-41012 Sevilla, Spain.  E-mail: {\tt juanvi@us.es}. JVGS was partially supported by the Spanish grant No. MTM2015-69875-P from Ministerio de Economía y Competitividad with the participation of FEDER}

\date{\today}

\begin{abstract} In this work we prove that weak solutions constructed by a variational multiscale method are suitable in the sense of Scheffer. In order to prove this result, we consider a subgrid model that enforces orthogonality between subgrid and finite element components. Further, the subgrid component must be tracked in time. Since this type of schemes introduce pressure stabilization, we have proved the result for equal-order velocity and pressure finite element spaces that do not satisfy a discrete inf-sup condition.
\end{abstract}

\maketitle

\noindent{\bf 2010 Mathematics Subject Classification:} 35Q30; 65N30; 76N10.

\noindent{\bf Keywords:} Navier--Stokes equations; Suitable weak solutions; Stabilized finite element methods, Subgrid scales.

\tableofcontents

\section{Introduction}

Incompressible Newtonian fluids are governed by the Navier-Stokes
equations. The existence of solutions is known from the works by Leray
\cite{Leray} and Hopf \cite{Hopf}. However, uniqueness is still an open
question. The loss of regularity is related to turbulence
\cite{guermond_use_2008}, and Leray denoted weak solutions as
\emph{turbulent solution}. Scheffer defined the concept of suitable
weak solutions in \cite{Scheffer_1977} and proved a bound for the
Haussdorff dimension of the singular set of a weak suitable
solution. This result was later improved by Cafarelli, Kohn, and
Nirenberg \cite{Caffarelli-Kohn-Nirenberg}, proving that this dimension
is smaller than 1. This is the sharpest regularity result so far.

Suitable weak solutions of the Navier-Stokes equations can be
constructed by regularization (see, e.g., \cite{Lions_1969}). More
recently, Guermond proved that inf-sup stable finite element (FE)
approximations having a discrete commutator property also converge to
suitable weak solutions, first for periodic boundary conditions in the
three-dimensional torus \cite{guermond_finite-element-based_2006}, and
next on general domains and no-slip boundary conditions
\cite{Guermond_2007}. The Fourier method does not satisfy the required
assumptions, and it is still an open question whether it provides
suitable solutions.

The Navier-Stokes equations have a dissipative structure, due to the
viscous term. The system has a singular limit in the assymptotic
regime as the Reynolds (Re) number, which is the ratio of inertia
forces to viscous forces, goes to infinity. The singular limit and the
fact that the system is indefinite complicate its numerical
approximation. The first property requires to introduce some kind of
convection stabilization, whereas the second prevents the use of the same
FE space for both the velocity and pressure unknowns, the discrete
system is unstable.

At the continuous level, the nonlinear convective term transfers
energy from the largest to the smallest scales, till reaching the
Kolmogorov scale, where energy is dissipated. In direct numerical
simulations (DNS) the mesh needs to be fine enough to capture the smallest
scales in the flow. However, this approach is unacceptable for
industrial turbulent flows, due to the limits in computational
resources. In real applications, under-resolved simulations are
needed. The smallest scales that can be captured in these simulations
are far from the Kolmogorov scale and dissipation is negligible. Thus,
one has to add so-called large eddy simulation (LES) turbulent models
that add artificial diffusion mechanisms. The concept of suitability
and the fact that energy is dissipated at the mesh scale in a
physically consistent way have been related in
\cite{guermond_use_2008}. Otherwise, an energy pile-up occurs at the
smallest grid scales, leading to instabilities.

Convection stabilization and turbulence models are strongly
related. In this sense, many authors have considered so-called
implicit LES (ILES) methods that do not modify the original
Navier-Stokes equations but introduce additional numerical artifacts
when carrying out the discretization \cite{Boris1992,Grinstein2007}. In
the frame of FE techniques, one approach is to consider variational
multiscale (VMS) methods \cite{hughes_variational_1998, Hughes2000}. The
idea is to use a two-scale decomposition of the original problem and
provide a numerically motivated closure for the fine scale (see, e.g.,
\cite{guermond_stabilization_1999}). A similar stabilization procedure
can be used for the convective term and the pressure term, leading to
methods that do not require to satisfy a discrete inf-sup
condition. An alternative to traditional residual-based methods is to
consider subscales that are in some sense orthogonal to the FE
space. This idea has been proposed by Codina \cite{codina_time_2007},
where $L^2(\Omega)$ orthogonality was used. This method involves
global projections, which has motivated the use of local projections
(see, e.g., \cite{becker_finite_2001, Badia2012}). The treatment of the
time dimension in the subgrid model has also been object of active
research. In particular, the use of dynamic subscales methods that
track the subgrid scale in time have been proposed in
\cite{codina_time_2007}.

Even though DNS is impractical in real applications, it is better
understood than stabilized or ILES schemes. The groundbreaking works
by Guermond have proved that the FE Galerkin method leads to weak
suitable solutions in
\cite{guermond_finite-element-based_2006,Guermond_2007}. However, the
extension to ILES methods is not straightforward, due to the
introduction of additional terms to the numerical formulation. The
analysis of these methods has usually been restricted to \emph{a
  priori} error estimates for smooth enough solution (see, e.g.,
\cite{codinab5}).  Residual-based VMS schemes are not amenable for weak
convergence analysis, due to the proliferation of terms, e.g.,
including new velocity-pressure coupling terms. However, enforcing the
modelled subgrid scales to be orthogonal to the FE space and
considering the dynamic formulation in \cite{codina_time_2007}, the
authors have proved in \cite{Badia-Gutierrez} that the resulting scheme
converges to weak (turbulent) solutions of the Navier-Stokes
equations. For the same scheme, long-term stability estimates and
existence of a global attractor have been proved in
\cite{Badia-Codina-Gutierrez}. Further, a very detailed numerical
experimentation of these methods for isotropic and wall-bounded
turbulent flows can be found in \cite{colomes_assessment_2015}, proving
that these subgrid models act as accurate turbulence
models. Theoretical analyses supporting these results can also be
found in \cite{guasch_statistical_2013}.

In this work, we want to analyze whether VMS-type FE ILES schemes
converge to suitable weak solutions in the sense of Scheffer. We prove
that subgrid closures that are orthogonal and dynamic converge in fact
to suitable solutions for equal order FE pairs for the velocity and
pressure unknowns.

The outline of the work is the following. First, we state the problem
and introduce the notation in Section \ref{sec:statement}. The FE
approximation based on the VMS-type ILES scheme is introduced in Section
\ref{sec:fe}. Section \ref{sec:tech} includes some technical results
in fractional Sobolev spaces. Energy estimates are proved in Section
\ref{sec:energy}. Finally, the convergence towards weak and suitable
solutions is proved in \ref{weaksol}.

\section{Statement of the problem} \label{sec:statement}
Throughout this paper we follow faithfully the notation used in \cite{Guermond-Pasciak_2008} and \cite{Guermond_2007} so that the reader can trace with ease the main differences between these two works and the one presented herein. 
\subsection{Notation} Let $\Omega$ be an open subset of $\R^3$. For $p\in[1,\infty]$, we denote by $L^p(\Omega)$ the usual Lebesgue space, i.e.,  
$$
L^p(\Omega) = \{v : \Omega \to \R\, :\, v \mbox{ Lebesgue-measurable}, \int_\Omega |v(\x)|^p d\x<\infty \},
$$
with the usual modification when $p=\infty$. This space is a Banach space endowed with the norm
$\|v\|_{L^p(\Omega)}=(\int_{\Omega}|v(\x)|^p\,{\rm d}\x)^{1/p}$ if $p\in[1, \infty)$ or $\|v\|_{L^\infty(\Omega)}={\rm ess}\sup_{\x\in \Omega}|v(\x)|$ if $p=\infty$. In particular,  $L^2(\Omega)$ is a Hilbert space.  We shall use  
$\left(u,v\right)=\int_{\Omega}u(\x)v(\x){\rm d}\x$ for its inner product and $\|\cdot\|$ for its norm.  
For $m\in\N$, we denoted by $H^m(\Omega)$ the classical Sobolev-Hilbert spaces, i.e., 
$$
H^{m}(\Omega) = \{v \in L^2(\Omega)\,:\, \partial^k v \in L^2(\Omega)\ \forall ~ |k|\le m\}
$$ associated to the norm 
$$
\|v\|_{H^{m}(\Omega)} =
\left[\sum_{0\le |k| \le m} \|\partial^k v\|^2_{L^{2}(\Omega)}\right]^{\frac{1}{2}}\,,
$$ where $k = (k_1,...,k_d)\in{\mathbb{N}^d}$ is a multi-index and $|k| = \sum_{i=1}^d k_i$. Let $\mathcal{D}(\Omega)$ be the space of infinitely times differentiable functions with compact support in $\Omega$, i.e. the space of test functions on $\Omega$. Thus $H^{m}_0(\Omega)$ is defined as the completion of ${\mathcal D}(\Omega)$ with respect to the $H^{m}(\Omega)$-norm. Fractional-order Hilbert-Sobolev spaces are defined by the real method or $K$-method of interpolation due to Peetre and Lions \cite{Adams_Fournier}. Thus, we consider two spaces: $H^s(\Omega)=[L^2(\Omega), H^1(\Omega)]_{s}$, for $s\in (0,1)$, and  $\tilde H^s_0(\Omega)=[L^2(\Omega), H^1_0(\Omega)]_{s}$ for $s\in[0,1]$. Moreover, for $s\in(0,1)$, $H^s_0(\Omega)$ is the closure of $\mathcal{D}(\Omega)$ with respect to the $H^s(\Omega)$-norm. Note that the spaces $H^s(\Omega)$ and $H^s_0(\Omega)$ coincide for $s\in [0,\frac{1}{2}]$, with uniform norms \cite[Th 11.1]{Guermond_2009}, and the spaces $H^s(\Omega)$ and $\tilde H^s_0(\Omega)$ coincide with equivalent norms \cite{Lions_Magenes_1968} for $s\in[0,\frac{1}{2})$.  We also consider $H^s(\Omega)=[H^1(\Omega), H^2(\Omega)]_{s}$  for $s\in (1,2]$ and $\tilde H^s_0(\Omega)=H^s(\Omega)\cap H^1_0(\Omega)$ for $s\in (1,2]$.

The dual space of $\mathcal{D}(\Omega)$, the space of distributions, is denoted by $\mathcal{D}' (\Omega)$. Moreover, for $s<0$, $\tilde H^s(\Omega)$ is the dual of $\tilde H^{-s}_0(\Omega)$ and the space $H^{-s}_0(\Omega)$ is the complexion of $\mathcal{D}(\Omega)$ under the norm
$$
\|v\|_{H^{-s}(\Omega)}=\sup_{w\in \mathcal{D}(\Omega)\backslash\{\boldsymbol{0}\}}\frac{(v,w)}{\|w\|_{H^s(\Omega)}},
$$
We use  $\langle \cdot, \cdot\rangle$ to denote the duality pairing. For $s\in[0,\frac{1}{2})\cup (\frac{1}{2},\frac{3}{2})$, $H^{-s}(\Omega)$ coincides with $\tilde H^{-s}_0(\Omega)$.

We will use boldfaced letters for spaces of vector functions, e.g. $\L^2(\Omega)$ in place of $L^2(\Omega)^d$.

We will make use of the following space of vector fields:
$$\boldsymbol{\vartheta}=\{\v\in \boldsymbol{\mathcal{D}}(\Omega): \nabla\cdot\v=0 \mbox{ in } \Omega \}. $$
Related to the space $\boldsymbol{\vartheta}$, we consider the closures
in the $\L^2(\Omega)$ and $\H^1(\Omega)$-norm,  which are characterized by
$$
\begin{array}{lll}
\H&=& \{ \u \in \L^2(\Omega) : \nabla\cdot\u =0 \mbox{ in } \Omega, \u\cdot\boldsymbol{n} = 0 \hbox{
on }
\partial\Omega \},
\\
{\V}&=& \{\u \in \H^1(\Omega) : \nabla\cdot\u =0 \mbox{ in } \Omega, \u = {\bf 0}
\hbox{ on }
\partial\Omega \},
\end{array}
$$
where $\n$ is the outward normal to $\Omega$ on $\partial \Omega$. This characterization is true for locally Lipschitz-continuous domains (see \cite[Theorems 1.4 and 1.6]{Temam_2001} for a detailed proof). Furthermore, $L^2_{\int=0}(\Omega)$ (resp. $H^1_{\int =0}(\Omega)$)  is the space of zero-average $L^2(\Omega)$-functions (resp. zero-average $H^1(\Omega)$-functions ). Thus, by the real method of interpolation, $H^s_{\int=0}(\Omega)=[L^2_{\int=0}(\Omega), H^1_{\int=0}(\Omega)]$ for $s\in (0,1)$ (see \cite{Guermond_2009}). 

Let $X$ be a Banach space. Thus, $L^p(a,b;X)$ denotes the space of Bochner-measurable, $X$-valued functions on the interval $(0, T)$ such that $\int_0^T\|f(s)\|^p_{X} {\rm d} s<\infty$ if $1\le p<\infty$ or  ${\rm ess}\sup_{s\in(0,T)}\|f(s)\|_X<\infty$ if $p=\infty$. 
 
Moreover, $W^{1,1}(0,T; X)$ is the space of functions $f\in L^1(0,T; X)$ and $\frac{{\rm d}}{ {\rm d}s}f\in L^1(0,T; X) $ such that $\int_0^T(\|f(s)\|_X+\|\frac{{\rm d}}{{\rm d}s}f(s)\|_X)\,\ds<\infty$ and $W^{1,1}_0(0,T; X)$ is the closure of ${\mathcal D}(0, T; X)$   with respect to  the $W^{1,1}(0,T; X)$-norm, with $\mathcal{D}(0, T;X)$ being the space of infinitely times differentiable  functions defined on $(0,T)$ having values into $X$ with compact support in $(0,T)$.  Additionally, the dual  space of $W^{1,1}_0(0,T; X)$  is denoted by $W^{-1, \infty}(0,T; X')$ provided that $X$ is separable and reflexive.

The Fourier transform of a function $f \in L^1(\R; X)$ is  denoted by
$$
\mathcal{F}f (\xi):=\int_{-\infty}^{+\infty} e^{-  2\pi i t\cdot\xi} f(t) dt.
$$
Let $H$ be a Hilbert space and let $\mathcal{S}'(\R; H)$ be the space of tempered distributions taking value in $H$. Thus, for $\gamma\in\R$, one defines
$$
H^{\gamma} (\R; H)=\{ v\in \mathcal{S}'(\R; H); \int_\R (1+|\xi|)^{2\gamma} \|\mathcal{F} v\|_H^2 d\xi\},
$$
where $H$ is a Hilbert space. Additionally, the space $H^\gamma(0,T;H)$ is made up of tempered distributions in $\mathcal{S}'(0,T;H)$ with the norm
$$
\| v \|_{H^\gamma (0,T; H) }= \inf_{\mathfrak{v}\in \mathcal{S}'(\R;H) }\| \mathfrak{v}\|_{H^\gamma(\R;H)},
$$
where $\mathfrak{v}$ is the extension of $v$ by zero off $(0,T)$ belonging to $\mathcal{S}'(\R;H)$. 

Note that throughout this paper we use the symbol $C$ (with or without subscripts) to represent generic positive constants which can take different values at different places. 

\subsection{The Navier-Stokes equations}
The Navier-Stokes equations for the motion of a viscous, incompressible, Newtonian fluid can be written as
\begin{equation}\label{NS}
\left\{
\begin{array}{rclcc}
\displaystyle
\partial_t\u-\nu \Delta \u + (\u\cdot \nabla) \u  +
\nabla p &=&  \f & \mbox{ in }& \Omega\times (0, T),
\\
\nabla\cdot \u&=&0 &\mbox{ in } & \Omega\times (0, T),
\end{array}
\right.
\end{equation}
with $ \Omega$ being a bounded, three-dimensional domain and with $0<T<+\infty$. Here $\u: \Omega\times (0,T)\to \R^3$ represents the incompressible fluid velocity and $p: \Omega\times (0,T)\to \R $ represents the fluid pressure. Moreover, $\f$ is the external body force which acts on the system, and $\nu > 0$ is the kinematic fluid viscosity.

These equations are supplemented by the no-slip boundary condition
\begin{equation}\label{boundary-condition}
\u={\boldsymbol 0}\quad \mbox { on } \quad  \partial \Omega\times (0,T),
\end{equation}
and the initial condition
\begin{equation}\label{initial-condition}
\u(0)=\u_0\quad\mbox{ in }\quad \Omega.
\end{equation}

The first authors dealing with the concept of \emph{weak} solutions for the Navier-Stokes equations were Leray \cite{Leray} for the Cauchy problem in the whole space and later Hopf \cite{Hopf} for the initial-boundary value problem in bounded domains. Particularly, weak solutions were called \emph{turbulent} by  Leray due to the possible connection between the lack of regularity of weak solutions and turbulence.    
\begin{definition}\label{def:weak-solution} A function $\u$ is said to be a weak solution of
problem (\ref{NS})-(\ref{boundary-condition}) if:
\begin{equation}\label{regularity}
\u\in L^\infty(0,T; \H)\cap L^2(0,T; \V)
\end{equation}
and
$$
\begin{array}{l}
\displaystyle -\int_0^T(\u(t),\partial_t\v(t))\,\dt+\int_0^T
\langle(\u(t)\cdot\nabla)\u(t),\v(t)\rangle\,\dt +\int_0^T\nu(\nabla\u(t),\nabla\v(t))\,\dt
\\
\displaystyle =(\u_0, \v(0)) +\int_0^T\langle\f(t),\v(t)\rangle\,\dt
\end{array}
$$
for all $\v\in {W}^{1,1}(0,T;\V)$ with $\v(T)=\bf{0}$. Moreover, the energy
inequality
\begin{equation}\label{Energy}
\frac{1}{2}\|\u(t)\|^2+\nu\int_{0}^{t}\|\nabla\u(s)\|^2\,\ds\le
\frac{1}{2}\|\u_0\|^2+\int_{0}^t\langle\f(s),\u(s)\rangle\,\ds
\end{equation}
holds  a. e.~in $[0,T]$.
\end{definition}

An equivalent definition for weak solutions involving the pressure term is defined as follows.  
\begin{definition}\label{def:weak-solution2}
A pair $(\u, p)$ is said to be a weak solution of
problem (\ref{NS})-(\ref{boundary-condition}) if:
\begin{equation*}
\u\in L^\infty(0,T; \H)\cap L^2(0,T; \V)\quad \mbox{ and }\quad p\in W^{-1, \infty}(0,T, L^2(\Omega)/\R)
\end{equation*}
and
$$
\left\{
\begin{array}{l}
\partial_t\u+(\u\cdot\nabla)\u-\nu\Delta \u+\nabla p =\f\quad \mbox{ in } W^{-1, \infty}(0,T;\H^{-1}(\Omega)),
\\
\u(0)=\u_0 \quad\mbox{ in } \quad \H.
\end{array}
\right.
$$
Moreover, the energy inequality
$$
\frac{1}{2}\|\u(t)\|^2+\nu\int_{0}^{t}\|\nabla\u(s)\|^2\,\ds\le
\frac{1}{2}\|\u_0\|^2+\int_{0}^t\langle\f(s),\u(s)\rangle\,\ds
$$
holds  a. e.~in $[0,T]$.
\end{definition}

We refer the reader to \cite[Th. 1.3, Ch. V]{Girault-Raviart_1979} for a proof of the equivalence between Definitions \ref{def:weak-solution} and \ref{def:weak-solution2} with $p\in \mathcal{D}'((0,T)\times\Omega)$, that can easily be extended to $p\in W^{-1,\infty}(0,T; L^2(\Omega)/\R)$, by using de Rham's Lemma in \cite[Lm. 2]{Simon_1990}.

The two previous definitions of weak solutions can be proved for $\Omega$ being a bounded, Lipschitzian domain, and $\f\in L^2(0,T; \H^{-1}(\Omega))$ only. The weak solution that will be proved in this paper requires $\Omega$ to be, for instance, convex, and $\f\in L^2(0, T+1; \H^{-1}(\Omega))\cap L^p(0,T+1;\L^q(\Omega))$, with $p\in[1,2]$ and $q\in[1,\frac{3}{2}]$ satisfying $\frac{2}{p}+\frac{3}{q}=4$.

\begin{definition}\label{def:weak-solution3}
A pair $(\u, p)$ is said to be a weak solution of
problem (\ref{NS})-(\ref{boundary-condition}) if:
$$
\u\in L^\infty(0,T; \H)\cap L^2(0,T; \V)\quad \mbox{ and }\quad p\in H^{- r }(0,T, H^{1-s}_{\int=0}(\Omega))
$$
with $s\in(\frac{1}{2}, \frac{7}{10}]$ and $r>\bar r=\frac{3}{4}-\frac{s}{2}$, and
$$
\left\{
\begin{array}{rcccl}
\partial_t\u+(\u\cdot\nabla)\u-\nu\Delta \u+\nabla p &=&\f & \mbox{ in }& H^{- r}(0,T;\tilde\H^{-s}_0(\Omega)),
\\
\u(0)&=&\u_0& \mbox{ in } & \H.
\end{array}
\right.
$$
Moreover, the energy inequality
$$
\frac{1}{2}\|\u(t)\|^2+\nu\int_{0}^{t}\|\nabla\u(s)\|^2\,\ds\le
\frac{1}{2}\|\u_0\|^2+\int_{0}^t\langle\f(s),\u(s)\rangle\,\ds
$$
holds  a. e.~in $[0,T]$.
\end{definition}

Scheffer \cite{Scheffer_1977} introduced the definition of suitable weak solutions so as to prove a partial regularity theorem. Afterwards,  Caffarelli, Kohn, and Nirenberg \cite{Caffarelli-Kohn-Nirenberg} improved Scheffer's results, and F.-H. Lin \cite{Lin_1998} simplified the proofs of the results in \cite{Caffarelli-Kohn-Nirenberg}. 

\begin{definition}\label{Suitable_solution} A weak solution $(\u, p) $ is said to be suitable if the local energy inequality 
$$
\partial_t(\frac{1}{2}\u^2)+\nabla\cdot((\frac{1}{2}\u^2+p)\u)-\nu \Delta (\frac{1}{2}\u^2)+\nu(\nabla\u)^2-\f\cdot\u\le 0
$$
holds in $\mathcal {D}' ((0,T)\times \Omega; \R^+)$.
\end{definition}

\section{Finite element approximation}\label{sec:fe}
\subsection{Hypotheses}
Throughout this paper we will assume the following hypotheses: 
\begin{enumerate}
\item[(H1)] Let $\Omega $ be a connected, bounded, open subset of  $\R^3$ having a polyhedral boundary  such that there exist $\v\in \V\cap\H^2(\Omega)$ and $p\in H^1_{\int=0}(\Omega)$ satisfying
$$
\begin{array}{rcccl}
-\Delta\v+\nabla p&=&\g&\mbox{ in }&\Omega,
\\
\nabla\cdot\v&=&0&\mbox{ in } & \Omega.
\end{array}
$$ 
\item [(H2)] Consider $\{{\mathcal T}_h\}_{h>0}$ to be a shape-regular and quasi-uniform family of simplicial and conforming meshes of $\Omega$ such that $\overline{\Omega}=\cup_{K\in {\mathcal T}_h } K$ with $h=\max_{K\in{\mathcal T}_h} h_K$ where $h_K=\mathrm{ diam } \,K$. 
\item [(H3)] Let  $\{\W_h\}_{h>0}$ and $\{Q_h\}_{h>0}$ be  two families of finite-element spaces associated with  $\{{\mathcal T}_h\}_{h>0}$ such that $\W_h\subset \H^1_0(\Omega)$ and  $Q_h\subset H^1_{\int=0}(\Omega)$. Moreover, the finite-element spaces are required to satisfy the following conditions. Let $\pi_{\W_h}:\L^2(\Omega)\to \W_h$ and $\pi_{Q_h}:\L^2(\Omega)\to Q_h$  be the orthogonal projections onto $\W_h$ and $Q_h$, respectively,  with respect to the $\L^2(\Omega)$-inner product. Furthermore, we denote $\pi_{\W_h}^\perp(\cdot) := (\cdot) - \pi_{\W_h}(\cdot)$ and $\pi_{Q_h}^\perp(\cdot) := (\cdot) - \pi_{Q_h}(\cdot)$. 
\begin{enumerate}[(a)]
\item  There exists a constant $C_{\rm inv}>0$, independent of $h$, such that, for all $\w_h\in\W_h$, 
\begin{equation}\label{inverseL^inf-L^k}
\|\w_h\|_{\L^\infty(\Omega)}\le C_{\rm inv} h^{-\frac{3}{k}} \|\w_h\|_{\L^k(\Omega)}
\end{equation}
and
\begin{equation}\label{inverse-GradL^k-L^k}
\|\nabla\w_h\|_{\L^k(\Omega)}\le C_{\rm inv} h^{-1} \|\w_h\|_{\L^k(\Omega)}
\end{equation}
for $k\in[2,\infty]$,

\begin{equation}\label{inverseH^1-H^s}
\|\w_h\|_{\H^1(\Omega)} \le C_{\rm inv} h^{-1+s} \|\w_h\|_{\tilde\H^s_0(\Omega)}  
\end{equation}
for each $s\in[0,1]$, and 
\begin{equation}\label{inverseH^s-L^2andL^2-H^-s}
\|\w_h\|_{\tilde \H^s_0(\Omega)}\le C_{\rm inv} h^{-s} \|\w_h\|\quad\mbox{ and }\quad  \|\w_h\|\le C_{	\rm inv} h^{-s} \|\w_h\|_{\tilde \H^s_0(\Omega)}
\end{equation}
for $s\in[0,1]$. 
\item There exists a constant $C_{\rm st}(s)>0$, independent of $h$, such that, for $s\in[0,\frac{3}{2})$,
\begin{equation}\label{stability}
\|\pi_{\W_h} \w\|_{\tilde\H^s_0(\Omega)}\le C_{\rm st} (s) \| \w \|_{\tilde\H^s_0(\Omega)}\quad \mbox{ for all }\quad \w\in\tilde\H^s_0(\Omega),
\end{equation}
\item There exists a constant $C_{\rm int}>0$, independent of $h$, such that, for all $l$ and $s$, satisfying $0\le l \le \min\{1,s\}$ and $l\le s \le 2$, there holds
\begin{equation}\label{approx-velocity}
\|\pi_{\W_h}^\perp\w\|_{\tilde\H^l_0(\Omega)}\le C_{\rm int} h^{s-l} \|\w\|_{\tilde\H^{s}_0(\Omega)}\quad\mbox{ for all }\quad\w\in \tilde\H^s_0(\Omega),
\end{equation}
and
\begin{equation}\label{approx-pressure}
\|\pi^\perp_{Q_h}q\|_{H^l(\Omega)}\le C_{\rm int} h^{s-l} \|q\|_{H^{s}(\Omega)} \quad\mbox{ for all }\quad q\in H^s_{\int=0}(\Omega).
\end{equation}
\item There exists $C_{\rm com}>0$, independent of $h$, such that, for $0\le l \le m\le 1$ and $\varphi\in W^{2,\infty}_0(\Omega)$, 
\begin{equation}\label{comm-prop-vel}
\|\pi^\perp_{\W_h}(\varphi \w_h)\|_{\H^l(\Omega)}\le C h^{1+m-l} \|\w_h\|_{\H^m(\Omega)} \|\varphi\|_{W^{m+1,\infty}_0(\Omega)}\quad\mbox{ for all }\quad \w_h\in\W_h, 
\end{equation}
and
\begin{equation}\label{comm-prop-pre}
\|\pi_{Q_h}^\perp(\varphi q_h)\|_{\H^l(\Omega)}\le C h^{1+m-l} \|q_h\|_{\H^m(\Omega)} \|\varphi\|_{W^{m+1,\infty}_0(\Omega)}\quad\mbox{ for all }\quad q_h\in Q_h.
\end{equation}
\end{enumerate}
\item[(H4)] Let $\u_0\in\V$ and $\f\in L^2(0,T+1; \H^{-1}(\Omega))\cap L^p(0,T+1;\L^q(\Omega))$, with $p\in[1,2]$ and $q\in[1,\frac{3}{2}]$ satisfying $\frac{2}{p}+\frac{3}{q}=4$.  
\end{enumerate}

Hypothesis $(\rm H1)$ is ensured for domains having a $C^{1,1}$ boundary or being a convex polygon (cf. \cite{Kellogg-Osborn_1976} or \cite{Grisvard_1985}) or polyhedron (cf. \cite{Dauge_1989} ), with continuous dependence on $\f$. 

Hypothesis $\rm (H3)$ is extremely flexible and allows equal-order finite-element spaces for velocity and pressure. For instance, let  $\mathcal{P}_k(K)$ be the set of piecewise polynomial functions of degree less than or equal to $k$ on  $K$ being a tetrahedra. Thus the space of continuous, piecewise polynomial functions of degree less than or equal to $k$  on a mesh ${\mathcal T}_h$  is denoted as
$$
X_h = \left\{ v_h \in {C}^0(\overline\Omega) \;:\; v_h|_K \in \mathcal{P}_k(K), \  \forall K \in \mathcal{T}_h \right\},
$$
We choose the following continuous finite-element spaces 
$$
\W_h=\boldsymbol{X}_h\cap\H^1_0(\Omega)\quad\hbox{and}\quad Q_h=X_h\cap L^2_{\int=0}(\Omega),
$$
for approximating velocity and pressure, respectively.

The shape-regular and quasi-uniform properties of $\{{\mathcal T}_h\}_{h>0}$ assumed in $(\rm H2)$ suffice to ensure the properties of $(\rm H3)(a)$. We recommend the books  \cite[Sec. 4.5 ]{Brenner-Scott} and \cite[Sec. 1.7]{Ern-Guermond_2004} for a proof of \eqref{inverseL^inf-L^k} and  \eqref{inverse-GradL^k-L^k}, Appendix \ref{App-A} for a proof of \eqref{inverseH^1-H^s}, and \cite{Girault-Raviart_1986} for a proof of \eqref{inverseH^s-L^2andL^2-H^-s}. Moreover, the error estimates stated in (\rm H3) make use of $(\rm H2)$ as well (see \cite[Lm A.3, Rm 2.1]{Guermond-Pasciak_2008} for a proof).

The local approximation properties for the orthogonal projection operators $\pi_{\W_h}$ and $\pi_{Q_h}$ guarantee hypothesis $(\rm H4)$. The reader is referred to \cite{Bertoluzza_1999}.

\begin{remark} Let $p$ and $q$ be as in $(\rm H4)$. We know from Sobolev's embeddings that $\tilde\H^s_0(\Omega)$ is embedded in $\L^{q'}(\Omega)$, where $\frac{1}{q'}+\frac{1}{q}=1$ and $s=3(\frac{1}{q}-\frac{1}{2})$; hence $\L^q(\Omega)$ is embedded in $\tilde\H^{-s}_0(\Omega)$. Moreover, $H^r(\R; H) $ is embedded in $L^{p'}(\R; H)$, where $\frac{1}{p'}+\frac{1}{p}=1$ and $r >\bar r=\frac{1}{p}-\frac{1}{2}$ with $H$ being a Hilbert space; hence $L^{p}(\R; H) $ is embedded in $H^{-r}(\R; H) $ .  Let $\boldsymbol{\mathfrak{f}}$ be the extension of $\f$ outside $[0,T]$ as zero. Then, by Hausdorff-Young's inequality for the Fourier transform, we have
\begin{equation}\label{rm3.1-lab1}
\|\mathcal{F}\boldsymbol{\mathfrak{f}}\|_{H^{-r} (\R; \tilde\H^{-s}_0(\Omega))}\le C \|\mathcal{F}\boldsymbol{\mathfrak{f}}\|_{L^{p'} (\R; \L^{q}(\Omega))}\le C  \|\boldsymbol{\mathfrak{f}}\|_{L^{p} (\R; \L^{q}(\Omega))}= C \|\f\|_{L^{p} (0,T ; \L^{q}(\Omega))}.
\end{equation}
Therefore, 
\begin{equation}\label{rm3.1-lab2}
\f\in H^{-r}(0,T; \tilde\H^{-s}_0(\Omega)) 
\end{equation}
As a reference for further development, it is well to point out, here, the conditions for $p$, $q$, $s$ and $\bar r$:
\begin{enumerate}
\item [({\rm C})]
 Let  $s=3(\frac{1}{q}-\frac{1}{2})$ and $\bar r=\frac{1}{p}-\frac{1}{2}$  be defined for $p$ and  $q$ as in  ({\rm H4}). 
\end{enumerate}
\end{remark}

\subsection{The discrete problem}
Find $\u_h \in H^1(0,T;  \W_h)$, $p_h\in L^2(0,T; Q_h)$ and $\tilde\u_h\in H^1(0,T;\tilde \W_h)$ such that, for all $(\v_h,\tilde\v_h, q_h)\in \W_h\times\tilde\W_h\times Q_h$, 
\begin{subequations}\label{eq:GalerkinNS}
\begin{equation}\label{eq:Galerkin-u}
\left\{
\begin{array}{l}
\displaystyle (\partial_t\u_h,\v_h)+b(\u_h,\u_h,
\v_h)+\nu(\nabla\u_h,\nabla\v_h)
\\
\displaystyle
-(p_h,\nabla\cdot\v_h)- b(\u_h,\v_h,\tilde\u_h)=(\f_h,\v_h),
\end{array}
\right.
\end{equation}
\begin{equation}\label{eq:Galerkin-p}
(\u_h,\nabla q_h)+(\tilde\u_h,\nabla q_h)=0,
\end{equation}
\begin{equation}\label{eq:Sub-Scale}
\left\{
\begin{array}{l}
\displaystyle
(\partial_t\tilde\u_h,\tilde\v_h)+b(\u_h,\u_h,\tilde\v_h)
\\
\displaystyle
+\tau^{-1}(\tilde\u_h,\tilde\v_h)+(\nabla
p_h,\tilde\v_h)=0,
\end{array}
\right.
\end{equation}\label{eq:Initial_Condition}
\begin{equation}
\u_h(0)=\u_{0h}, 
\end{equation}
\end{subequations}
where $$\tau=\frac{1}{\frac{C_
s\nu}{h^2}+\frac{C_c\|\u_h\|_{L^\infty(\Omega)}}{h}}=\frac{h^2}{C_s\nu+C_c h\|\u_h\|_{\L^\infty(\Omega)}},$$  with  $C_s$ and $C_c$ being algorithmic positive constants, and $\f_h\in\W_h$ is defined by duality as $(\f_h,\w_h)=\langle\f,\w_h\rangle$, for all $\w_h\in\W_h$. Let us define
$$
b(\u_h,\v_h, \w_h)= \langle \mathcal{N}(\u_h,\v_h), \w_h \rangle,
$$
where $\mathcal{N}(\u_h,\v_h) = (\u_h \cdot \nabla) \v_h + \frac{1}{2}(\nabla \cdot \u_h) \v_h.$

Let $\{\boldsymbol{\psi}_i\}_{i=1,...,{\rm n}_{\rm u}}$ be a basis of $\W_h$ and let  $\{ \psi_i\}_{i=1,...,{\rm n}_{p}}$ be a basis of $Q_h$, where ${\rm n}_{\rm u}$ and ${\rm n}_{p}$ denote the space dimension for $\W_h$ and $Q_h$, respectively.  Thus, one defines
$$
\tilde \W_h = {\rm span} \{ \pi_{\W_h}^\perp(\mathcal{N}(\boldsymbol{\phi}_i,
\boldsymbol{\phi}_j)), \pi_{\W_h}^\perp(\nabla \phi_k) \},
$$
and  $\W_\star=\W_h \oplus \tilde\W_h$. Moreover, one defines 
$$
\V_\star=\{\v_\star\in\W_\star : (\v_h, \nabla q_h)+(\tilde\v_h, \nabla q_h)=0 \mbox{ for all } q_h\in Q_h\}.
$$
which is a non-conforming approximation space of $\V$.

The initialization of the discrete problem can be obtained by the
following projection problem: find $\u_{0h} \in \V_h$, $\tilde\u_{0h}
\in \tilde{\V}_h$ and $\xi_h \in Q_h$ such that
\begin{equation}
\left\{
\begin{array}{rcll}\label{proj_u0}
\left( \u_{0h},\v_h\right) - \left(\xi_h,\nabla \cdot \v_h \right)
&= &(\u_0,\v_h),& \mbox{ for all }\quad \v_h \in \V_h,
\\
\left( \tilde\u_{0h},\tilde\v\right) + \left(\nabla \xi_h, \tilde \v
\right) &=& (\u_0,\tilde \v_h),& \mbox{ for all } \quad \tilde{\v}_h \in \tilde{\V}_h,
\\
\left( \nabla\cdot\u_{0h}, q_h \right) - \left( \tilde\u_{0h}, \nabla q_h \right)& =& 0, &\mbox{ for all } \quad q_h \in Q_h.
\end{array}
\right.
\end{equation}

\subsection{Discrete operators} 
This subsection is devoted to introducing the discrete operators that are used throughtout this paper. 

Firstly, we will consider a conforming and non-conforming approximation of the Laplace operator $-\Delta: \tilde\H^2_0(\Omega) \to \L^2(\Omega)$. The non-conforming approximation is based on a stabilizing technique. 

Consider $-\Delta_h:\H^1_0(\Omega)\to\W_h$ to be the discrete Laplacian operator defined as:
$$
-(\Delta_h\w, \bar\w_h)=(\nabla\w_h, \nabla\bar\w_h) \quad\mbox{ for all }\quad \bar\w_h\in\W_h.
$$ 
The restriction of this operator $-\Delta_h$  to $\W_h\subset\H^1_0(\Omega)$ gives a self-adjoint, positive-definite operator. Therefore, we are allowed to define the fractional power of $-\Delta_h$, say $(-\Delta_h)^s$, for all $s\in \R$, by the Hilbert-Schmidt theorem. The domain of definition of $(-\Delta_h)^s$ is $ D((-\Delta_h)^s)\equiv \W_h$ since $\dim \W_h<\infty$. Hence, $\W_h^s$ makes reference to $\W_h$ equipped with the Hilbert norm  
$$
\|\w_h\|_{\W^s_h}=((-\Delta_h)^\frac{s}{2}\w_h, (-\Delta_h)^\frac{s}{2}\w_h)^{\frac{1}{2}}.
$$             
The family $\{\W_h^s\}_{s\in\R}$ is a scale of Hilbert spaces with respect to the real method of interpolation.  
Analogously, consider $-\Delta_\star : \W_\star\to \W_\star$ to be the stabilized discrete Laplacian operator defined as  
$$
-(\Delta_\star\w_\star, \bar\w_\star)=(\nabla \pi_{\W_h} \w_\star, \nabla \pi_{\W_h}\bar\w_\star)+h^{-2} (\pi^{\perp}_h\w_\star, \pi^{\perp}_h\bar\w_\star)\quad\mbox{ for all }\quad \bar\w_\star\in\W_\star.
$$
It is easy to see that $-\Delta_\star\w_\star=-\pi_{\W_h}\Delta_\star\w_\star-\pi_{\W_h}^\perp\Delta_\star\w_\star =-\Delta_h\pi_{\W_h}\w_\star- h^{-2}\pi^\perp_{\W_h}\w_\star$. We have that $-\Delta_\star$ is self-adjoint and positive-definite. Therefore, we are also allowed to define the fractional power of $-\Delta_\star$, say $(-\Delta_\star)^s$, for all $s\in\R$, by the Hilbert-Schmidt theorem. Thus, $\W_\star^s$ is $\W_\star$ equipped with the Hilbert norm
$$
\|\w_\star\|_{\W^s_\star}=((- \Delta_\star)^{\frac{s}{2}}\w_\star, (-\Delta_\star)^\frac{s}{2}\w_\star).
$$

Secondly, we will consider a non-conforming approximation of the Stokes operator $A:=P(-\Delta): \V\cap \H^2(\Omega)\to \H$ where $P$ is the Leray-Helmholtz projector operator. 

Let $A_\star : \V_\star\to\V_\star$ be defined as
$$
(A_\star\v_\star, \bar\v_\star)=(\nabla\pi_{\W_h}\v_\star, \nabla\pi_{\W_h}\bar\v_\star)+ h^{-2} (\pi^\perp_{\W_h}\v_\star, \pi^\perp_{\W_h}\bar\v_\star) \quad \mbox{ for all } \quad\bar\v_\star\in \V_\star.
$$
Equivalently, one can write $A_\star=\pi_{\W_h}A_\star+\pi^\perp_{\W_h} A_\star:=A_h+\tilde A_h$ satisfying
\begin{equation}\label{Stokes-stab}
\left\{
\begin{array}{rcll}
(A_h\v_\star, \w_h)+(\nabla r_h, \w_h)&=&(\nabla \pi_{\W_h}\v_\star, \nabla \w_h)& \mbox{ for all } \quad \w_h\in\W_h, 
\\
(A_h\v_\star, \nabla q_h)+(\tilde A_h\v_\star, \nabla q_h)&=&0 & \mbox{ for all } \quad q_h\in Q_h,
\\
(\tilde A_h\v_\star, \tilde\w_h)+(\nabla r_h, \tilde\w_h)&=&h^{-2}(\pi^\perp_{\W_h}\v_\star, \tilde \w_h)& \mbox{ for all} \quad \tilde\w_h\in\tilde\W_h.
\end{array}
\right.
\end{equation}
Again, $A_\star$ is a self-adjoint, positive-definite operator. Therefore, the fractional power of $A_\star$, say $A^s_\star$, is well-defined for all $s\in \R$. Moreover, $\V^s_\star$ denotes $\V_\star$ equipped with the Hilbert norm
$$
\|\v_\star\|_{\V_\star^s}=(A_\star^{\frac{s}{2}}\v_\star, A_\star^{\frac{s}{2}}\v_\star )^{\frac{1}{2}}.
$$
The family $\{\V_\star^s \}_{s\in\R}$ is a scale of Hilbert space with respect to the real method of interpolation. 

Next we will consider a non-conforming approximation of the Leray-Helmholtz projection operator. Let $P_\star : \L^2(\Omega)\to\V_\star$ be defined as
$$
(P_\star\v, \bar\v_\star)=(\v, \bar\v_\star) \quad \mbox{ for all } \quad\bar\v_\star\in \V_\star.
$$
Equivalently, one can write $P_\star=\pi_{\W_h}P_\star+\pi^\perp_{\W_h} P_\star:=P_h+\tilde P_h$ satisfying
\begin{equation}\label{stab-Leray-proj}
\left\{
\begin{array}{rcll}
(P_h\v, \w_h)+(\nabla r_h, \w_h)&=&( \pi_{\W_h}\v,  \w_h)& \mbox{ for all } \quad \w_h\in\W_h 
\\
(P_h\v, \nabla q_h)+(\tilde P_h\v, \nabla q_h)&=&0 & \mbox{ for all } \quad q_h\in Q_h,
\\
(\tilde P_h\v, \tilde\w_h)+(\nabla r_h, \tilde\w_h)&=&(\pi_{\W_h}^\perp\v, \tilde \w_h)& \mbox{ for all } \quad \tilde\w_h\in\tilde\W_h.
\end{array}
\right.
\end{equation}

Finally, we define the stabilized Ritz projection operator onto $\V_\star$.  Let $R_\star: \H^1_0(\Omega)=\pi_{\W_h}\H^1_0\oplus \pi^\perp_{\W_h} \H^1_0(\Omega)\to \V_\star$ be defined as 
$$
(\nabla \pi_{\W_h} R_\star\v, \nabla \pi_{\W_h}\v_\star)+h^{-2} (\pi_{\W_h}^\perp R_\star \v, \pi_{\W_h}^\perp\bar\v_\star)=(\nabla\pi_{\W_h}\v, \nabla \pi_{\W_h}\v_\star)+h^{-2} (\pi^\perp_{\W_h} \v_, \pi_{\W_h}^\perp\bar\v_\star),
$$
for all $\quad\v_\star\in\V_\star$. Equivalently, one can write $R_\star=\pi_{\W_h}R_\star+\pi^\perp_{\W_h} R_\star:=R_h+\tilde R_h$ satisfying
\begin{equation}\label{stab-Ritz-proj}
\left\{
\begin{array}{rcll}
(\nabla R_h\v, \nabla \w_h)+(\nabla r_h, \w_h)&=&(\nabla\pi_{\W_h}\v,  \nabla\w_h)& \mbox{ for all } \quad \w_h\in\W_h 
\\
(R_h\v, \nabla q_h)+(\tilde R_h\v, \nabla q_h)&=&0 & \mbox{ for all } \quad q_h\in Q_h,
\\
h^{-2}(\tilde R_h\v, \tilde\w_h)+(\nabla r_h, \tilde\w_h)&=&h^{-2}(\pi_{\W_h}^\perp\v, \tilde \w_h)& \mbox{ for all } \quad \tilde\w_h\in\tilde\W_h.
\end{array}
\right.
\end{equation}

\section{Technical preliminary results} \label{sec:tech}
This section is mainly devote to some technical results concerning
equivalence between norms and inf-sup conditions in fractional-order
Sobolev spaces.
\begin{lemma}\label{lm4.1} Suppose that conditions $(\rm H1)$-$(\rm H3)$ hold. Then there exist two positive constants $c, C$ such that, for all $s\in\R$, 
\begin{equation}\label{lm4.1-lab1}
c(\|\w_h\|_{\W_h^s}+h^{-s} \|\tilde \w_h\|)\le \|\w_\star\|_{\W^s_\star}\le C (\|\w_h\|_{\W_h^s}+h^{-s} \|\tilde\w_h\|),
\end{equation}
for all $ \w_\star=\w_h+\tilde\w_h\in \W_\star$.
\end{lemma}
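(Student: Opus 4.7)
The plan is to exploit the block-diagonal structure of $-\Delta_\star$ with respect to the $L^2$-orthogonal decomposition $\W_\star = \W_h \oplus \tilde\W_h$, and then pass to fractional powers via spectral calculus. Note that $\tilde\W_h$ is defined as a range of $\pi_{\W_h}^\perp$, hence $\pi_{\W_h}\tilde\w_h = 0$ and $\pi_{\W_h}^\perp \w_h = 0$ for any $\w_h \in \W_h$ and $\tilde\w_h \in \tilde\W_h$; in particular, for $\w_\star = \w_h + \tilde\w_h$ we have $\pi_{\W_h}\w_\star = \w_h$ and $\pi_{\W_h}^\perp \w_\star = \tilde\w_h$, and the decomposition is $L^2$-orthogonal.

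First I would verify that $-\Delta_\star$ respects this decomposition. For $\w_h \in \W_h$ regarded as an element of $\W_\star$ and any $\bar\w_\star = \bar\w_h + \bar{\tilde\w}_h \in \W_\star$, a direct substitution in the defining identity of $-\Delta_\star$ gives $(-\Delta_\star \w_h, \bar\w_\star) = (\nabla\w_h,\nabla\bar\w_h) = (-\Delta_h \w_h, \bar\w_h)$, so the $\tilde\W_h$-component of $-\Delta_\star \w_h$ vanishes and its $\W_h$-component equals $-\Delta_h \w_h$. The analogous computation on $\tilde\w_h \in \tilde\W_h$ yields $(-\Delta_\star\tilde\w_h, \bar\w_\star) = h^{-2}(\tilde\w_h, \bar{\tilde\w}_h)$, so $-\Delta_\star$ restricted to $\tilde\W_h$ acts as $h^{-2}\operatorname{Id}$. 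Hence, in the block decomposition $\W_\star = \W_h \oplus \tilde\W_h$, the operator $-\Delta_\star$ reads as the block diagonal
\begin{equation*}
-\Delta_\star = \begin{pmatrix} -\Delta_h & 0 \\ 0 & h^{-2}\operatorname{Id} \end{pmatrix}.
\end{equation*}

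Since both diagonal blocks are self-adjoint and positive-definite on their respective (finite-dimensional) spaces, the Hilbert--Schmidt theorem applied blockwise shows that the spectral resolution of $-\Delta_\star$ is the union of those of $-\Delta_h$ and $h^{-2}\operatorname{Id}$, and the functional calculus commutes with the direct sum. Consequently, for every $s \in \R$,
\begin{equation*}
(-\Delta_\star)^{s/2}\w_\star = (-\Delta_h)^{s/2}\w_h + h^{-s}\tilde\w_h,
\end{equation*}
where the two summands lie in $\W_h$ and $\tilde\W_h$ respectively. Taking squared $L^2$-norms and using again the $L^2$-orthogonality $\W_h \perp \tilde\W_h$, I obtain the exact identity
\begin{equation*}
\|\w_\star\|_{\W_\star^s}^2 \;=\; \|\w_h\|_{\W_h^s}^2 \;+\; h^{-2s}\,\|\tilde\w_h\|^2.
\end{equation*}

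Finally, the elementary inequality $\tfrac{1}{\sqrt 2}(a+b) \le \sqrt{a^2+b^2} \le a+b$ for $a,b \ge 0$ applied to $a = \|\w_h\|_{\W_h^s}$ and $b = h^{-s}\|\tilde\w_h\|$ yields \eqref{lm4.1-lab1} with explicit constants $c = 1/\sqrt{2}$ and $C = 1$, independent of $s$ and $h$. There is no real obstacle here beyond verifying the block-diagonal structure; once that is in place the conclusion is purely algebraic, and the result holds for every $s \in \R$ because the spectral calculus makes sense on each finite-dimensional factor separately.
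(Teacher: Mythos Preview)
Your proof is correct and follows essentially the same approach as the paper: both rest on the observation that $-\Delta_\star$ is block-diagonal with respect to the $L^2$-orthogonal splitting $\W_\star=\W_h\oplus\tilde\W_h$, acting as $-\Delta_h$ on $\W_h$ and as $h^{-2}\mathrm{Id}$ on $\tilde\W_h$, so that $(-\Delta_\star)^{s/2}\w_\star=(-\Delta_h)^{s/2}\w_h+h^{-s}\tilde\w_h$. You have simply spelled out in full the spectral-calculus step and the passage from the Pythagorean identity to the two-sided bound, which the paper compresses into a single line.
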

\begin{proof} The proof follows by observing that $(-\Delta_\star\w_\star)^s=(-\Delta \pi_{\W_h}\w_\star)^s+ h^{-s} \pi^\perp_{\W_h}\tilde\w_\star$ for all $\w_\star$. 

\end{proof}
\begin{corollary}\label{co4.2}
Suppose that conditions $(\rm H1)$-$(\rm H3)$ hold. Then there exist two positive constants $c, C$ such that, for all $s\in(-\frac{3}{2},\frac{3}{2})$, 
\begin{equation}\label{co4.2-lab1}
c(\|\w_h\|_{\tilde\H_0^s(\Omega)}+h^{-s} \|\tilde\w_h\|)\le \|\w_\star\|_{\W^s_\star}\le C (\|\w_h\|_{\tilde\H_0^s(\Omega)}+h^{-s} \|\tilde\w_h\|),
\end{equation}
for all $\w_\star=\w_h+\tilde\w_h\in \W_\star$.
\end{corollary}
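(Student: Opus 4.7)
The plan is to combine Lemma \ref{lm4.1} with a scalar norm equivalence on $\W_h$ alone. From that lemma we already have
$$\|\w_\star\|_{\W^s_\star} \simeq \|\w_h\|_{\W^s_h} + h^{-s}\|\tilde\w_h\|$$
for every $s\in\R$, so the statement of Corollary \ref{co4.2} reduces to establishing
$$
\|\w_h\|_{\W^s_h} \simeq \|\w_h\|_{\tilde\H^s_0(\Omega)}\qquad\text{for all }\w_h\in\W_h,\ s\in(-\tfrac{3}{2},\tfrac{3}{2}). \qquad (\ast)
$$

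First I would settle the two endpoints $s=0$ and $s=1$ directly: at $s=0$ both sides coincide with $\|\w_h\|$, and at $s=1$ the definition of $-\Delta_h$ gives $\|\w_h\|_{\W^1_h}^2=(-\Delta_h\w_h,\w_h)=\|\nabla\w_h\|^2$, which via Poincar\'e's inequality (recall $\W_h\subset\H^1_0(\Omega)$) is equivalent to $\|\w_h\|_{\tilde\H^1_0(\Omega)}$. The intermediate range $s\in(0,1)$ is then covered by the real method of interpolation, applied simultaneously to the scale $\{\W^s_h\}_s$ (an interpolation scale by the spectral construction from $-\Delta_h$) and to $\{\tilde\H^s_0(\Omega)\}_s$ (an interpolation scale by its very definition through Peetre--Lions).

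The extension to $s\in(1,\tfrac{3}{2})$ uses the elliptic regularity hypothesis $(\rm H1)$ together with the projection stability \eqref{stability} and the approximation estimate \eqref{approx-velocity} from $(\rm H3)$. The idea is to lift $\g:=-\Delta_h\w_h\in\W_h$ to the continuous solution $u\in\tilde\H^s_0(\Omega)$ of $-\Delta u=\g$ with homogeneous Dirichlet boundary data; $(\rm H1)$ combined with interpolation between the $H^1$ and $H^2$ regularity estimates yields $\|u\|_{\tilde\H^s_0(\Omega)}\leq C\|\g\|_{\tilde\H^{s-2}_0(\Omega)}$, while \eqref{stability} and \eqref{approx-velocity} let us identify $\w_h$ with $\pi_{\W_h}u$ up to controllable perturbations; this produces both inequalities of $(\ast)$ on $(1,\tfrac{3}{2})$. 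Finally, the negative-order range $s\in(-\tfrac{3}{2},0)$ is obtained by $L^2$-duality: $\W^{-s}_h$ is the $L^2$-dual of $\W^s_h$ inside $\W_h$, and the dual of $\tilde\H^s_0(\Omega)$ coincides with $\tilde\H^{-s}(\Omega)$ throughout the admitted range.

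The only genuine obstacle is the window $s\in(1,\tfrac{3}{2})$, precisely the interval where the $\tilde\H^s_0$ norm crosses the boundary-trace threshold and where $\W^s_h$, built from the discrete Laplacian, implicitly carries a boundary condition through the embedding $\W_h\subset\H^1_0(\Omega)$. The strict cutoff $s<\tfrac{3}{2}$ in the statement mirrors exactly the strict cutoff in $(\rm H3)(b)$ and in the interior/convex-polyhedron elliptic regularity used to produce the lift $u$; beyond that threshold the comparison breaks down because neither $\pi_{\W_h}$ remains stable nor $u$ retains the required regularity.
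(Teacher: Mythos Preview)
Your reduction via Lemma~\ref{lm4.1} to the scalar equivalence $(\ast)$ on $\W_h$ is exactly what the paper does. The paper, however, does not prove $(\ast)$ at all: it simply cites \cite[Lemma~2.2]{Guermond-Pasciak_2008}, where this norm equivalence between $\|\cdot\|_{\W^s_h}$ and $\|\cdot\|_{\tilde\H^s_0(\Omega)}$ for $s\in(-\tfrac{3}{2},\tfrac{3}{2})$ is established. What you have written is, in effect, a sketch of the proof of that cited lemma, and the overall architecture (endpoints $s=0,1$, interpolation on $[0,1]$, elliptic lifting for $s\in(1,\tfrac32)$, $L^2$-duality combined with $\pi_{\W_h}$-stability for $s<0$) is indeed how Guermond and Pasciak proceed. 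Two small imprecisions in your outline: the lift $u$ with $-\Delta u=-\Delta_h\w_h$ makes $\w_h$ the \emph{Ritz} projection of $u$, not $\pi_{\W_h}u$, so the comparison should go through $R_h$ rather than the $L^2$-projection; and $(\rm H1)$ is stated for the Stokes problem, whereas your lifting step needs $H^2$-regularity for the Dirichlet Laplacian, which holds under the same convexity assumption but is not literally what $(\rm H1)$ says. Neither point is fatal, and your expansion is a legitimate replacement for the citation the paper relies on.
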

\begin{proof} The proof is based on the result of \cite[Lemma 2.2]{Guermond-Pasciak_2008}:
$$c \|\w_h\|_{\tilde\H_0^s}\le \|\w_h\|_{\W^s_h}\le C \|\w_h\|_{\tilde\H_0^s}\quad \mbox{ for all }\quad \w_h\in \W_h.$$
with $s\in(-\frac{3}{2},\frac{3}{2})$.
\end{proof}
In the next lemma,  we prove the stability of the stabilized discrete Leray-Helmholtz operator $P_\star=P_h+\tilde P_h$.
\begin{lemma}\label{lm4.3} Assume that conditions $(\rm H1)$-$(\rm H3)$ are satisfied. Then there exists a positive constant C, independent of $h$, such that,  for all $s\in [0, \frac{1}{2})$,
\begin{equation}\label{stab-Ph_and_tPh}
\|P_h\v\|_{\tilde\H^s_0(\Omega)}+h^{-s}\|\tilde P_h\v\|\le C \|\v\|_{\tilde\H^s_0(\Omega)}\quad \mbox{ for all }\quad \v\in \tilde\H^s_0(\Omega),
\end{equation}
where $P_\star=P_h+\tilde P_h$ is the $\L^2(\Omega)$-orthogonal projection operator onto $\V_\star$.
\end{lemma}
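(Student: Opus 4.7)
The plan is to exploit the norm equivalence of Corollary \ref{co4.2}, which reduces the desired estimate to the bound
$$\|P_\star \v\|_{\W^s_\star} \le C\,\|\v\|_{\tilde\H^s_0(\Omega)}, \qquad s\in[0,\tfrac12),$$
since $P_\star\v = P_h\v+\tilde P_h\v$ with $P_h\v\in\W_h$ and $\tilde P_h\v\in\tilde\W_h$. The base case $s=0$ is immediate: $P_\star$ is an $\L^2$-orthogonal projection onto $\V_\star$, so $\|P_\star\v\|\le\|\v\|$, and since $\W_h\perp\tilde\W_h$ in $\L^2$ we even have $\|P_h\v\|^2+\|\tilde P_h\v\|^2\le\|\v\|^2$.

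For $s\in(0,\tfrac12)$, the strategy is to decompose $\v=\v_1+\v_2$ with $\v_1=\pi_{\W_h}\v$ and $\v_2=\pi^\perp_{\W_h}\v$, and to treat each piece by exploiting the structure of the right-hand sides in \eqref{stab-Leray-proj}. For $\v_2$, only the third equation has a nonzero forcing, namely $(\pi^\perp_{\W_h}\v_2,\tilde\w_h)=(\v_2,\tilde\w_h)$; the $\L^2$-stability from Step~1 then yields $\|P_h\v_2\|+\|\tilde P_h\v_2\|\le C\|\v_2\|$, while the approximation property \eqref{approx-velocity} gives $\|\v_2\|\le C h^s\|\v\|_{\tilde\H^s_0(\Omega)}$. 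The inverse estimate \eqref{inverseH^s-L^2andL^2-H^-s} converts the $\L^2$ bound on $P_h\v_2\in\W_h$ into
$$\|P_h\v_2\|_{\tilde\H^s_0(\Omega)}\le C h^{-s}\|P_h\v_2\|\le C\|\v\|_{\tilde\H^s_0(\Omega)},$$
and directly $h^{-s}\|\tilde P_h\v_2\|\le C\|\v\|_{\tilde\H^s_0(\Omega)}$.

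For $\v_1=\pi_{\W_h}\v\in\W_h$, the first equation of \eqref{stab-Leray-proj} gives the identity $P_h\v_1=\v_1-\pi_{\W_h}(\nabla r_h)$, while the third equation gives $\tilde P_h\v_1=-\pi_{\tilde\W_h}(\nabla r_h)$. The subgrid space $\tilde\W_h$ contains $\pi^\perp_{\W_h}(\nabla\phi_k)$ for every pressure basis function, which provides the inf-sup control needed to bound the pressure surrogate: testing the system with $(\w_h,\tilde\w_h)=(P_h\v_1-\v_1,\tilde P_h\v_1)$ and using the divergence constraint produces a Brezzi-type estimate of the form $\|\nabla r_h\|\le C\|\v_1\|_{\H^1(\Omega)}$, whence $\|P_h\v_1\|_{\H^1(\Omega)}\le C\|\v_1\|_{\H^1(\Omega)}$ and $\|\tilde P_h\v_1\|\le C h\|\v_1\|_{\H^1(\Omega)}$ by \eqref{approx-velocity}. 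Interpolating between this $\H^1$ bound and the $\L^2$ bound of Step~1 on the scale $\{\W^s_\star\}_s$, and then invoking the stability \eqref{stability} of $\pi_{\W_h}$ in $\tilde\H^s_0(\Omega)$ to dominate $\|\v_1\|_{\tilde\H^s_0(\Omega)}$ by $\|\v\|_{\tilde\H^s_0(\Omega)}$, yields the estimate for $\v_1$.

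The main obstacle is the $\v_1$ piece: one must establish the discrete inf-sup that controls $r_h$, which is exactly what the orthogonal subgrid space $\tilde\W_h$ is designed to provide, and then interpolate carefully so that the constants do not degenerate as $s\uparrow\tfrac12$. The restriction $s<\tfrac12$ reflects the transition at which $\tilde\H^s_0(\Omega)$ and $\H^s(\Omega)$ cease to coincide with equivalent norms, so that the boundary behaviour of the subgrid scales does not enter the estimate.
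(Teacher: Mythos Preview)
Your approach has a genuine gap at the $\H^1$ endpoint for the piece $\v_1=\pi_{\W_h}\v$. From the system \eqref{stab-Leray-proj} with $\v=\v_1\in\W_h$ one indeed obtains $P_h\v_1=\v_1-\pi_{\W_h}\nabla r_h$, $\tilde P_h\v_1=-\pi^\perp_{\W_h}\nabla r_h$, and the discrete Neumann problem $(\nabla r_h,\nabla q_h)=(\v_1,\nabla q_h)$, which yields $\|\nabla r_h\|\le C\|\v_1\|_{\H^1(\Omega)}$. But this bound is \emph{not} enough for what you claim next. To get $\|P_h\v_1\|_{\H^1(\Omega)}\le C\|\v_1\|_{\H^1(\Omega)}$ you would need $\|\pi_{\W_h}\nabla r_h\|_{\H^1(\Omega)}\le C\|\v_1\|_{\H^1(\Omega)}$; since $\nabla r_h$ is a discontinuous (piecewise polynomial) field, it is not in $\H^1(\Omega)$, so neither the $\H^1$-stability of $\pi_{\W_h}$ nor the inverse estimate \eqref{inverse-GradL^k-L^k} combined with $\|\nabla r_h\|$ delivers this (the latter produces only $Ch^{-1}\|\nabla r_h\|$, which blows up). Likewise, the claim $\|\tilde P_h\v_1\|\le Ch\|\v_1\|_{\H^1(\Omega)}$ ``by \eqref{approx-velocity}'' fails for the same reason: \eqref{approx-velocity} with $s=1$ requires the argument to lie in $\tilde\H^1_0(\Omega)$, which $\nabla r_h$ does not.

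There is also a conceptual red flag: if your interpolation between $s=0$ and $s=1$ worked, it would give the lemma for every $s\in[0,1]$, not just $s<\tfrac12$. The restriction $s<\tfrac12$ is not an artifact; it reflects the fact that the continuous Leray projector $P$ satisfies $\|P\v\|_{\tilde\H^s_0(\Omega)}\le C\|\v\|_{\tilde\H^s_0(\Omega)}$ only for $s<\tfrac12$, because $P\v$ has merely zero normal trace and does not belong to $\tilde\H^s_0(\Omega)$ once $s\ge\tfrac12$. The paper's proof uses precisely this: it compares $P_\star$ with the continuous $P$ through the error estimate
\[
\|P_h\v-P\v\|+\|\tilde P_h\v\|\le C\,h^{s}\,\|\v\|_{\tilde\H^s_0(\Omega)},
\]
obtained from \eqref{approx-velocity}--\eqref{approx-pressure} and a C\'ea-type bound, and then combines the inverse estimate \eqref{inverseH^s-L^2andL^2-H^-s} with the stability of $P$ in $\tilde\H^s_0(\Omega)$ (cited from Guermond--Pasciak) to conclude. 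That comparison with the continuous projector is the missing ingredient in your argument.
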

\begin{proof} Let $\v\in\tilde\H^s_0(\Omega)$. Then, by the Helmholtz-Hodge decomposition, there exists $r\in H^1_{\int=0}(\Omega)$ such that 
$$
\v=P\v+\nabla r,
$$
whose variational formulation reads as:
\begin{equation}\label{lm4.2-lab1}
\left\{
\begin{array}{rclll}
(P\v, \bar\v)+(\nabla r, \bar\v)&=&(\v, \bar\v)& \mbox{ for all }&\bar \v\in \L^2(\Omega), 
\\
(\v, \nabla q)&=&0&\mbox{ for all }& \H^1_{\int =0}(\Omega),
\end{array}
\right.
\end{equation}
Note that problem (\ref{stab-Leray-proj}) is the stabilized discrete counterpart of (\ref{lm4.2-lab1}). From \cite[Chapter II, Theorem 1.1]{Girault-Raviart_1986}, we get  
\begin{equation}\label{lm4.2-lab2}
\|P_\star\v-P\v\|+\|\nabla r_h-\nabla r\|\le C (\inf_{\w_\star\in\W_\star}\|P\v-\w_\star\|+ \inf_{q_h\in Q_h}\|\nabla r-\nabla q_h\|).
\end{equation}
Using the fact that  $P_\star=P_h+\tilde P_h$ with $P_h$ and $\tilde P_h$ being $\L^2(\Omega)$-orthogonal by definition, we have 
$$
\begin{array}{rcl}
\|P_\star\v-P\v\|^2&=&\|P_h\v-P\v\|^2+\|\tilde P_h\v\|^2-2(P\v, \tilde P_h\v )
\\
&=&\|P_h\v-P\v\|^2+\|\tilde P_h\v\|^2-2(P\v-\pi_{\W_h}P\v, \tilde P_h\v )
\\
&\ge& \|P_h\v-P\v\|^2+\|\tilde P_h\v\|^2- 2\|P\v-\pi_{\W_h}P\v\|^2-\frac{1}{2} \| \tilde P_h\v\|^2
\\
&=&\|P_h\v-P\v\|^2+\frac{1}{2}\|\tilde P_h\v\|^2- 2\|P\v-\pi_{\W_h}P\v\|^2.
\end{array}
$$
Inserting this back into (\ref{lm4.2-lab2}), we obtain 
$$
\|P_h\v-P\v\|+\|\nabla r_h-\nabla r\|+\|\tilde P_h\v\|\le C (\inf_{\w_h\in\W_h}\|P\v-\w_h\|+ \inf_{q_h\in Q_h}\|\nabla r-\nabla q_h\|).
$$
From \eqref{approx-velocity} and \eqref{approx-pressure}, we find that 
\begin{equation}\label{lm4.2-lab3}
\|P_h\v-P\v\|+\|\tilde P_h \v\|\le C h^{s} \|\v\|_{\tilde\H^s_0(\Omega)},
\end{equation}
and
\begin{equation}\label{lm4.2-lab4}
h^{-s}\|\tilde P_h \v\|\le C  \|\v\|_{\tilde\H^s_0(\Omega)}. 
\end{equation}
In view of \eqref{inverseH^s-L^2andL^2-H^-s}, \eqref{stability},  \eqref{approx-velocity}  and (\ref{lm4.2-lab3}), we write 
\begin{equation}\label{lm4.2-lab5}
\begin{array}{rcl}
\|P_h\v\|_{\tilde\H^s_0(\Omega)}&\le&\|P_h\v-\pi_{\W_h} P\v\|_{\H^s_0(\Omega)}+\|\pi_{\W_h} P_h\v\|_{\tilde\H^s_0(\Omega)}
\\
&\le& C h^{-s}\|P_h\v-\pi_{\W_h} P\v\|+ C \|P\v\|_{\tilde\H^s_0(\Omega)}
\\
&\le& C h^{-s} (\|P_h\v-P\v\|+\|P\v-\pi_{\W_h} P\v\|)+C \|P\v\|_{\tilde \H^s_0(\Omega)}
\\
&\le& C \|\v\|_{\tilde\H^s_0(\Omega)}.
\end{array}
\end{equation}
In the last line we have made use of the inequality $\|P\v\|_{\H^s_0(\Omega)}\le C \|\v\|_{\tilde\H^s_0(\Omega)}$ for all $s\in[0,\frac{1}{2})$. For a proof, see \cite[Lemma 1.1]{Guermond-Pasciak_2008}. Also see  \cite[Remark 3.1]{Guermond-Pasciak_2008} for an explanation of the restriction of $s\in[0,\frac{1}{2})$.

Finally, the proof follows by combining (\ref{lm4.2-lab4}) and (\ref{lm4.2-lab5}).
\end{proof}
As a consequence of Lemma \ref{lm4.3}, we immediately obtain the following.
\begin{corollary}Assume that conditions $(\rm H1)$-$(\rm H3)$ hold. Then there exists a positive constant C, independent of $h$, such that,  for all $s\in [0, \frac{1}{2})$,
\begin{equation}\label{stab_P_star}
\|P_\star\w_\star\|_{\W^{s}_\star}\le C \|\w_\star\|_{\W^s_\star}\quad \mbox{ for all }\quad \w_\star\in \W_\star,
\end{equation}
where $P_\star=P_h+\tilde P_h$ is the $\L^2(\Omega)$-orthogonal projection operator onto $\V_\star$.
\end{corollary}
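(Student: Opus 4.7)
The plan is to exploit the decomposition $\w_\star = \w_h + \tilde\w_h$ and the linearity of $P_\star$, treating the two pieces separately. We cannot simply invoke Lemma~\ref{lm4.3} with $\v = \w_\star$ because the subgrid component $\tilde\w_h \in \tilde\W_h$ need not sit in $\tilde\H^s_0(\Omega)$ in a norm-controlled way; this is the one mild obstacle, and it is overcome by an inverse estimate.

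First I would handle the conforming piece. Since $\w_h \in \W_h \subset \H^1_0(\Omega) \subset \tilde\H^s_0(\Omega)$ for $s \in [0,\tfrac12)$, Lemma~\ref{lm4.3} applies directly to $\v = \w_h$, giving
\[
\|P_h \w_h\|_{\tilde\H^s_0(\Omega)} + h^{-s}\|\tilde P_h \w_h\| \le C\|\w_h\|_{\tilde\H^s_0(\Omega)}.
\]
Combining this with the upper bound in Corollary~\ref{co4.2} (applied to $P_\star \w_h = P_h \w_h + \tilde P_h \w_h \in \W_\star$) yields $\|P_\star \w_h\|_{\W^s_\star} \le C\|\w_h\|_{\tilde\H^s_0(\Omega)}$, and the lower bound in Corollary~\ref{co4.2} (applied to $\w_\star$) gives $\|\w_h\|_{\tilde\H^s_0(\Omega)} \le C\|\w_\star\|_{\W^s_\star}$.

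Next I would handle the subgrid piece $P_\star \tilde\w_h$. By Corollary~\ref{co4.2},
\[
\|P_\star \tilde\w_h\|_{\W^s_\star} \le C\bigl(\|P_h \tilde\w_h\|_{\tilde\H^s_0(\Omega)} + h^{-s}\|\tilde P_h \tilde\w_h\|\bigr).
\]
Since $P_h \tilde\w_h \in \W_h$, the inverse inequality \eqref{inverseH^s-L^2andL^2-H^-s} gives $\|P_h \tilde\w_h\|_{\tilde\H^s_0(\Omega)} \le C h^{-s}\|P_h \tilde\w_h\|$. Now I would use that $P_\star$ is an $\L^2(\Omega)$-orthogonal projection and that its components $P_h = \pi_{\W_h} P_\star$ and $\tilde P_h = \pi^\perp_{\W_h} P_\star$ land in $\L^2$-orthogonal ranges; Pythagoras gives $\|P_h \v\|^2 + \|\tilde P_h \v\|^2 = \|P_\star \v\|^2 \le \|\v\|^2$, hence both $\|P_h \tilde\w_h\|$ and $\|\tilde P_h \tilde\w_h\|$ are bounded by $\|\tilde\w_h\|$. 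Therefore $\|P_\star \tilde\w_h\|_{\W^s_\star} \le C h^{-s}\|\tilde\w_h\|$, and the lower bound in Corollary~\ref{co4.2} again converts this to $C\|\w_\star\|_{\W^s_\star}$.

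Adding the two estimates finishes the proof. The only nontrivial observation is the $\L^2$-orthogonality of $P_h$ and $\tilde P_h$ (coming from the fact that $\pi_{\W_h}$ and $\pi^\perp_{\W_h}$ have orthogonal ranges), which lets us cleanly bound each component of $P_\star \tilde\w_h$ in $\L^2$ by $\|\tilde\w_h\|$ before invoking the inverse estimate. The restriction $s < \tfrac12$ is inherited from Lemma~\ref{lm4.3}; the ranges of $s$ in Corollary~\ref{co4.2} and in the inverse estimate are both wider and pose no obstruction.
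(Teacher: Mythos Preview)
Your proof is correct and follows essentially the same route as the paper: split $\w_\star=\w_h+\tilde\w_h$, apply Lemma~\ref{lm4.3} to the conforming piece, and control $P_\star\tilde\w_h$ in $\L^2$ before invoking an inverse estimate and Corollary~\ref{co4.2}. The only cosmetic difference is that the paper obtains $\|P_h\tilde\w_h\|^2+\tfrac12\|\tilde P_h\tilde\w_h\|^2\le\tfrac12\|\tilde\w_h\|^2$ by testing the variational system \eqref{stab-Leray-proj} directly, whereas you use the equivalent (and slightly cleaner) Pythagoras argument $\|P_h\tilde\w_h\|^2+\|\tilde P_h\tilde\w_h\|^2=\|P_\star\tilde\w_h\|^2\le\|\tilde\w_h\|^2$; your version also makes explicit the cross-term bounds ($h^{-s}\|\tilde P_h\w_h\|$ and $\|P_h\tilde\w_h\|_{\tilde\H^s_0}$) that the paper leaves to the reader.
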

\begin{proof} Let $\w_\star\in \W_\star$ such that $\w_\star=\w_h+\tilde\w_h$. Take $\v=\w_h$ in (\ref{stab-Ph_and_tPh}) to get
\begin{equation}\label{co4.4-lab1}
\|P_h\w_h\|_{\tilde\H^s_0(\Omega)}\le C \|\w_h\|_{\tilde\H^s_0(\Omega)}.
\end{equation}
Next, select $\v=\tilde\w_h$ in (\ref{stab-Leray-proj}). Now, pick $\w_h=P_h\tilde\w_h$, $q_h=r_h$, and $\tilde\w_h=\tilde P_h\tilde\w_h$ to obtain
$$
\|P_h\tilde\w_h\|^2+\frac{1}{2}\|\tilde P_h\tilde\w_h\|^2\le \frac{1}{2}\|\tilde\w_h\|^2.
$$
In particular, we have
\begin{equation}\label{co4.4-lab2}
h^{-s}\|\tilde P_h\tilde\w_h\|^2\le h^{-s}\|\tilde\w_h\|^2.
\end{equation}
Combining  (\ref{co4.4-lab1}) and (\ref{co4.4-lab2}), we prove (\ref{stab_P_star}) by Corollary \ref{co4.2}.
\end{proof}
The following lemma sets up the equivalence between $\|\cdot\|_{\W^s_\star}$ and $\|\cdot\|_{\V^s_\star}$.
\begin{lemma}\label{lm4.5} Suppose that $(\rm H1)$-$(\rm H2)$ are satisfied. There exist two positive constants $C, c$ such that, for each $s\in (-\frac{1}{2}, 2)$,
\begin{equation}\label{W^s<V^s}
c\|\v_\star\|_{\W^s_\star}\le  \|\v_\star\|_{\V^s_\star}\quad\mbox{  for all }\quad \v_\star\in \V_\star,
\end{equation}
and, for each $s\in [-2, 2]$, 
\begin{equation}\label{V^s<W^s}
\|\v_\star\|_{\V^s_\star}\le C \|\v_\star\|_{\W^s_\star}\quad\mbox{  for all }\quad \v_\star\in \V_\star.
\end{equation}

\end{lemma}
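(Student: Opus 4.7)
The plan is to establish the norm equivalences at the integer endpoints $s=0,1,2$ and fill in the remaining ranges by real-method interpolation and duality. For the easy direction \eqref{V^s<W^s}, equality holds at $s=0$ since both norms reduce to $\|\v_\star\|$. At $s=2$, a comparison of the defining equations \eqref{Stokes-stab} for $A_\star$ with those for $-\Delta_\star$ shows that $A_\star \v_\star = P_\star(-\Delta_\star \v_\star)$ on $\V_\star$, the difference $\nabla r_h$ (with $r_h\in Q_h$) being a ``pressure-gradient'' element of $\W_\star$ that lies in the $\L^2$-orthogonal complement of $\V_\star$ in $\W_\star$. Since $P_\star$ is the $\L^2$-orthogonal projector onto $\V_\star$, this immediately gives $\|A_\star\v_\star\|\le\|-\Delta_\star\v_\star\|$, i.e.\ \eqref{V^s<W^s} at $s=2$. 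Real-method interpolation between $s=0$ and $s=2$ then yields \eqref{V^s<W^s} for $s\in[0,2]$, and a standard duality argument extends it to $[-2,2]$.

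For the hard direction \eqref{W^s<V^s}, the key observation is equality at $s=1$: for $\v_\star\in\V_\star$, the defining bilinear forms of $A_\star$ and $-\Delta_\star$ coincide when tested against $\v_\star$ itself, so
\begin{equation*}
\|\v_\star\|_{\V^1_\star}^2 = (A_\star\v_\star,\v_\star) = \|\nabla\pi_{\W_h}\v_\star\|^2 + h^{-2}\|\pi^\perp_{\W_h}\v_\star\|^2 = (-\Delta_\star\v_\star,\v_\star) = \|\v_\star\|_{\W^1_\star}^2.
\end{equation*}
Combined with trivial equality at $s=0$, real-method interpolation gives \eqref{W^s<V^s} (in fact as equality) on $\V_\star$ for $s\in[0,1]$. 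To cover $s\in(-\tfrac{1}{2},0)$ I use duality: the identity $(\v_\star,\bar\w_\star)=(\v_\star,P_\star\bar\w_\star)$ valid for $\v_\star\in\V_\star$, combined with the $\L^2$-stability of $P_\star$ on $\tilde\H^s_0(\Omega)$ for $s\in[0,\tfrac{1}{2})$ from Lemma \ref{lm4.3} and Corollary \ref{co4.2}, shows that $P_\star:\W^s_\star\to\V^s_\star$ is $h$-uniformly bounded; the duality characterization of $\|\cdot\|_{\W^{-s}_\star}$ then yields \eqref{W^s<V^s} for $s\in(-\tfrac{1}{2},0]$.

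The remaining range $s\in(1,2)$ is the delicate part and the principal obstacle. At $s=2$ itself, the relation $-\Delta_\star\v_\star - A_\star\v_\star = \nabla r_h$ forces one to bound $\|\nabla r_h\|$ by $\|A_\star\v_\star\|$ alone, which would require an inf-sup mechanism beyond what the subgrid stabilization provides; thus \eqref{W^s<V^s} is not expected to hold uniformly at $s=2$, and this is precisely why the stated range is open there. My plan is to interpolate between $s=1$ (equality) and a carefully chosen intermediate point $s^*<2$, the resulting constant degenerating as $s\to 2^-$. Throughout, I exploit that $\V_\star$ is a finite-dimensional closed subspace of $\W_\star$, so real-method interpolation commutes with restriction and matches the spectral-theoretic definition of the fractional-power norms $\|\cdot\|_{\V^s_\star}$ and $\|\cdot\|_{\W^s_\star}$ through the self-adjoint positive operators $A_\star$ and $-\Delta_\star$.
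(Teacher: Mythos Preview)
Your main gap is in the hard direction \eqref{W^s<V^s} for $s\in(1,2)$. You assert that $s=2$ is inaccessible because one would have to control $\|\nabla r_h\|$ by $\|A_\star\v_\star\|$, but this is exactly what hypothesis $(\rm H1)$ provides and is how the paper proceeds: solve the continuous Stokes problem $-\Delta\v+\nabla r=A_\star\v_\star$, $\nabla\cdot\v=0$, and invoke $(\rm H1)$ to get $\|\Delta\v\|+\|\nabla r\|\le C\|A_\star\v_\star\|$. The discrete triple $(\v_h,\tilde\v_h,r_h)$ solves the stabilized Stokes system with the same right-hand side, and standard error estimates for that scheme give $\|\nabla(\v-\v_h)\|+h^{-1}\|\tilde\v_h\|\le Ch\|A_\star\v_\star\|$. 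Testing $-\Delta_\star\v_\star$ against itself, using this error bound together with an inverse inequality and the bound on $\|\Delta\v\|$, one obtains $\|\Delta_\star\v_\star\|\le C\|A_\star\v_\star\|$ uniformly in $h$, i.e.\ \eqref{W^s<V^s} at $s=2$. Interpolation with $s=0$ then gives all of $[0,2]$ at once; no degenerate constant near $2$ is needed. Your ``plan'' to interpolate between $s=1$ and some unspecified $s^*<2$ is therefore both unnecessary and, as written, not a proof.

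A second, more technical gap affects both directions. You interpolate the identity map on $\V_\star$ between two norms induced from $\W_\star$, claiming that ``real-method interpolation commutes with restriction'' because $\V_\star$ is finite-dimensional. Finite-dimensionality gives equivalence of norms for each fixed $h$, but the constants are $h$-dependent; to get $h$-uniform bounds you need a projection $\W_\star\to\V_\star$ bounded at both endpoints. The paper supplies exactly this for the easy direction via the stabilized Ritz projection $R_\star$: the relation $A_\star R_\star=P_\star(-\Delta_\star)$ gives $\|R_\star\w_\star\|_{\V^2_\star}\le\|\w_\star\|_{\W^2_\star}$, while the $L^2$-bound $\|R_\star\w_\star\|\le C\|\w_\star\|$ follows by duality from $R_\star^T=-\Delta_\star A_\star^{-1}$ and the \emph{already proven} hard-direction bound at $s=2$. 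Interpolating $R_\star:\W^s_\star\to\V^s_\star$ and using $R_\star|_{\V_\star}=\mathrm{id}$ then yields \eqref{V^s<W^s} for $s\in[0,2]$. Note that the easy direction thus relies on the hard direction at $s=2$, which reinforces why you cannot skip that step.
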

\begin{proof} Assertion (\ref{W^s<V^s}) is proved as follows. Take $\v_\star\in \V_\star$ such that $\v_\star=\v_h+\tilde\v_h $. Let $s\in [0, 2]$.  Define $(\v, r)\in (\V\cap\H^1_0(\Omega))\times H^1_{\int=0}(\Omega)$  such that
$$
\left\{
\begin{array}{rclrl}
-\Delta \v+\nabla r&=&A_\star\v_\star& \mbox{ in } & \Omega,
\\
\nabla\cdot\v&=&0&\mbox{ in }& \Omega.
\end{array}
\right.
$$
By virtue of $(\rm H1)$, we have that $\|\Delta\v\|+\|\nabla r\|\le C \|A_\star\v_\star\|$. 
Moreover,  we have that  $(\v_h, \tilde\v_h, r_h)\in\W_h\times \tilde \W_h\times Q_h $ satisfies, for all $(\w_h, \tilde\w_h, q_h)\in \W_h\times\tilde\W_h\times Q_h$,
$$
\begin{array}{rcl}
(\nabla\v_h, \nabla\w_h)+(\nabla r_h, \w_h)&=&(A_\star\v_\star, \w_h),
\\
(\v_h, \nabla q_h)+(\tilde\v_h, \nabla q_h)&=&0,
\\
h^{-2}(\tilde \v_h, \tilde \w_h)+(\nabla r_h, \tilde \w_h)&=&(A_\star\v_\star, \tilde \w_h).
\end{array}
$$
Comparing both problems, we get the following error estimates, that can be found in \cite[Lemma 3.2 ]{Badia-Codina-Gutierrez}: 
\begin{equation}\label{lm4.5-lab1-bis}
\|\nabla (\v-\v_h)\|+h^{-1}\|\tilde\v_h\|\le C h \|A_\star\v_\star\|.
\end{equation}
Next, let us write
$$
-(\Delta_\star\v_\star, \w_\star)=(\nabla(\pi_{\W_h}\v_\star-\v), \nabla\pi_{\W_h}\w_\star)+h^{-2}(\pi^\perp_{\W_h}\v_\star, \pi^\perp_{\W_h}\w_\star)-(\Delta\v, \w_h),
$$
where we have used the fact that $-\Delta_\star\v_\star=-\Delta_h\pi_{\W_h}\v_\star- h^{-2}\pi^\perp_{\W_h}\v_\star$. Select $\w_\star=-\Delta_\star\v_\star$ to find, from \eqref{inverseH^s-L^2andL^2-H^-s} and \eqref{lm4.5-lab1-bis}, that 
$$
\begin{array}{rcl}
\|\Delta_\star\v_\star\|^2&=&\|\nabla(\v_h-\v)\| \|\nabla\pi_{\W_h}\Delta_\star\v_\star\|+h^{-2}\|\pi_{\W_h}^\perp\v_\star\| \|\pi_{\W_h}^\perp \Delta_\star\v_\star\|+\|\Delta\v\| \|\Delta_\star\v_\star\|
\\
&\le& C\| A_\star\v_\star\| \|\Delta_\star\v_\star\|+ C\|A_\star\v_\star\| \|\Delta_\star\v_\star\|.
\end{array}
$$
Thus, we obtain
\begin{equation}\label{lm4.5-lab1}
\|\Delta_\star\v_\star\|\le C \|A_\star\v_\star\|.
\end{equation}
Equivalently,
$$
\|\v_\star\|_{\W_\star^2}\le C \|\v_\star\|_{\V_\star^2}.
$$
We also have
$$
\|\v_\star\|_{\W_\star^0}\le C \|\v_\star\|_{\V_\star^0}.
$$
By interpolation, one then deduces
\begin{equation}\label{lm4.5-lab2}
\|\v_\star\|_{\W_\star^s}\le C \|\v_\star\|_{\V_\star^s}.
\end{equation}
for all $s\in[0,2]$. 

Let $s\in (-\frac{1}{2}, 0]$.  Then
$$
\|\v_\star\|_{\W^s_\star}=\sup_{\w_\star\in \W^{-s}_\star\backslash \{\boldsymbol{0}\}}\frac{(\v_\star, \w_\star)}{\|\w_\star\|_{\W^{-s}_\star}}=\sup_{\w_\star\in \W^{-s}_\star\backslash \{\boldsymbol{0}\}}\frac{(\v_\star, P_\star\w_\star)}{\|\w_\star\|_{\W^{-s}_\star}}.
$$
In view of \eqref{stab_P_star}, we have
$$
\begin{array}{rcl}
\|\v_\star\|_{\W^s_\star}&\le&\displaystyle C \sup_{\w_\star\in  \W^{-s}_\star\backslash \{\boldsymbol{0}\}}\frac{(\v_\star, P_\star\w_\star)}{\|P_\star\w_\star\|_{\W^{-s}_\star}}\le C \sup_{\w_\star\in  \V^{-s}_\star\backslash \{\boldsymbol{0}\}}\frac{(\v_\star,\w_\star)}{\|\w_\star\|_{\V^{-s}_\star}}
\\
&\le&\displaystyle C \|\v_\star\|_{\V_\star^s}  \sup_{\w_\star\in \V^{-s}_\star\backslash \{\boldsymbol{0}\}}\frac{\|\w_\star\|_{\V^{-s}_\star}}{\|\w_\star\|_{\W^{-s}_\star}}\le C \|\v_\star\|_{\V_\star^s}.
\end{array}
$$
Assertion (\ref{V^s<W^s}) is proved as follows. Pick $\v_\star	\in\V_\star$. Let $s\in  [0, 2] $.  It should be first noted that $A_\star R_\star \w_\star = P_\star\Delta_\star\w_\star$ for all $\w_\star\in\W_\star$. Then, it follows that  
\begin{equation}\label{lm4.5-lab3}
\|R_\star\w_\star\|_{\V^2_\star}=\|A_\star R_\star\w_\star\|=\|P_\star\Delta_\star\w_\star\|\le \|\Delta_\star\w_\star\|=\|\w_\star\|_{\W^2_\star}.
\end{equation}
Next, we have that 
\begin{equation}\label{lm4.5-lab4}
\|R_\star\w_\star\|\le C \|\w_\star\|
\end{equation}
holds for all $\w_\star\in \W_\star$, i.e., $\|R_\star\w_\star\|_{\V^0_\star}\le C \|\w_h\|_{\W^0_h}$  for all $\w_\star\in \W_\star$. Indeed, let $R^T_\star: \V_\star\to\W_\star $ the adjoint operator of $R_\star$, namely $R^T_\star=-\Delta_\star A_\star^{-1}$, since $R_\star=- A_\star^{-1} P_\star \Delta_\star$. It is clear that if 
\begin{equation}\label{lm4.5-lab5}
\|R^T_\star \v_\star\|\le C \|\v_\star\|
\end{equation}
holds for all $\v_\star\in\V_\star$, then (\ref{lm4.5-lab4}) is true. But (\ref{lm4.5-lab5}) is a consequence of (\ref{lm4.5-lab1}). By interpolation between \eqref{lm4.5-lab3} and \eqref{lm4.5-lab4} for $\w_\star=\v_\star$, we obtain  $\|\v_\star\|_{\V^s_\star}\le C \|\v_\star\|_{\W^s_\star} $ for all $\v_\star\in \V_\star$ and $s\in[0,2]$. 

Let $s\in [-2, 0]$. From (\ref{lm4.5-lab2}), we have
$$
\|\v_\star\|_{\V^s_\star}=\sup_{\v_\star\in\V_\star\backslash\{\boldsymbol 0\}}\frac{(\v_\star, \bar\v_\star)}{\|\bar\v_\star\|_{\V^{-s}_\star}}\le C \sup_{\v_\star\in\V_\star\backslash\{\boldsymbol 0\}}\frac{(\v_\star, \bar\v_\star)}{\|\bar\v_\star\|_{\W^{-s}_\star}}\le\|\v_\star\|_{\W^s_\star}.
$$ 

It completes the proof.
\end{proof}
As a corollary to Lemma \ref{lm4.5}, we have the following inequality whose proof needs Corollary \ref{co4.2}. 
\begin{corollary}\label{co4.6}  Suppose that conditions $(\rm H1)$-$(\rm H2)$ are satisfied. Then there exist two positive constants $C, c$ such that, for each $s\in (-\frac{1}{2}, \frac{3}{2})$,
\begin{equation}\label{co4.6-lab1}
c(\|\v_h\|_{\tilde\H^s_0(\Omega)}+h^{-s}\|\tilde \v_h\|)\le  \|\v_\star\|_{\V^s_\star} \quad \mbox{ for all }\quad\v_\star\in\V_\star,
\end{equation}
and, for each $s\in (-\frac{3}{2}, \frac{3}{2})$,  
\begin{equation}\label{co4.6-lab2}
 \|\v_\star\|_{\V^s_\star}\le C(\|\v_h\|_{\tilde\H^s_0(\Omega)}+h^{-s}\|\tilde \v_h\|)\quad \mbox{ for all }\quad\v_\star\in\V_\star.
\end{equation}
\end{corollary}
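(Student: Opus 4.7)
The plan is to chain Corollary \ref{co4.2} with Lemma \ref{lm4.5}. Since $\v_\star\in\V_\star\subset\W_\star$, both the $\W^s_\star$-norm and the $\V^s_\star$-norm are defined on $\v_\star$, and Corollary \ref{co4.2} already gives a two-sided control of $\|\v_\star\|_{\W^s_\star}$ by $\|\v_h\|_{\tilde\H^s_0(\Omega)}+h^{-s}\|\tilde\v_h\|$ in the range $s\in(-\tfrac{3}{2},\tfrac{3}{2})$, while Lemma \ref{lm4.5} relates $\|\v_\star\|_{\W^s_\star}$ and $\|\v_\star\|_{\V^s_\star}$. So I merely need to intersect the ranges of validity.

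For the lower bound \eqref{co4.6-lab1}, I would fix $s\in(-\tfrac{1}{2},\tfrac{3}{2})$, which lies in both $(-\tfrac{3}{2},\tfrac{3}{2})$ and $(-\tfrac{1}{2},2)$. Apply first the lower inequality of Corollary \ref{co4.2} to $\w_\star=\v_\star$,
\begin{equation*}
c\bigl(\|\v_h\|_{\tilde\H^s_0(\Omega)}+h^{-s}\|\tilde\v_h\|\bigr)\le \|\v_\star\|_{\W^s_\star},
\end{equation*}
and then \eqref{W^s<V^s} from Lemma \ref{lm4.5} to get $\|\v_\star\|_{\W^s_\star}\le C\|\v_\star\|_{\V^s_\star}$. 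Composing the two inequalities yields the desired bound.

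For the upper bound \eqref{co4.6-lab2}, I would fix $s\in(-\tfrac{3}{2},\tfrac{3}{2})$, which is contained in $[-2,2]$, so \eqref{V^s<W^s} from Lemma \ref{lm4.5} gives $\|\v_\star\|_{\V^s_\star}\le C\|\v_\star\|_{\W^s_\star}$. Then the upper inequality of Corollary \ref{co4.2} yields
\begin{equation*}
\|\v_\star\|_{\W^s_\star}\le C\bigl(\|\v_h\|_{\tilde\H^s_0(\Omega)}+h^{-s}\|\tilde\v_h\|\bigr),
\end{equation*}
and composition closes the argument.

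There is no real obstacle here: the content is entirely in Corollary \ref{co4.2} (norm equivalence for the two-scale decomposition) and in Lemma \ref{lm4.5} (equivalence of the stabilized Laplacian and Stokes scales). The only thing to be careful about is the asymmetry of the valid exponent ranges, which is precisely why the lower bound is stated for $s\in(-\tfrac{1}{2},\tfrac{3}{2})$ while the upper bound extends down to $s\in(-\tfrac{3}{2},\tfrac{3}{2})$; this reflects that \eqref{W^s<V^s} in Lemma \ref{lm4.5} requires $s>-\tfrac{1}{2}$, whereas \eqref{V^s<W^s} holds throughout $[-2,2]$.
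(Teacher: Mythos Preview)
Your proof is correct and follows exactly the approach the paper indicates: chain Corollary~\ref{co4.2} with Lemma~\ref{lm4.5}, intersecting the admissible ranges of~$s$. Your explanation of why the lower bound is restricted to $s\in(-\tfrac12,\tfrac32)$ while the upper bound holds on all of $(-\tfrac32,\tfrac32)$ is also exactly right.
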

We are now concerned with the proof of an inf-sup condition in the framework of fractional Sobolev spaces.
\begin{lemma} Under conditions $(\rm H1)$-$(\rm H3)$, it follows that, for $s\in [0,1]$,
$$
\sup_{\w_\star \in  \W_\star\backslash \{ \boldsymbol 0\}} \frac{\left({\nabla q_h},{\w_\star}\right)}{\|\w_\star \|_{\W^{1-s}_\star}}  \gtrsim \|q_h\|_{H^s(\Omega)},
$$
\end{lemma}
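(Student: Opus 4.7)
My strategy is to establish the inequality at the two endpoints $s=0$ and $s=1$ by constructing explicit test functions in $\W_\star$, and then to obtain the intermediate values $s\in(0,1)$ by interpolation along the Hilbert scale $\{\W^r_\star\}_{r\in\R}$ on one side and $\{H^r_{\int=0}(\Omega)\}$ on the other. The key structural fact that I will exploit throughout is that, by the very definition of $\tilde\W_h$, $\pi^\perp_{\W_h}(\nabla q_h)\in\tilde\W_h$ for every $q_h\in Q_h$, so that $\nabla q_h=\pi_{\W_h}\nabla q_h+\pi^\perp_{\W_h}\nabla q_h\in \W_\star$.

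For $s=1$ take simply $\w_\star:=\nabla q_h\in\W_\star$. Then $(\nabla q_h,\w_\star)=\|\nabla q_h\|^2$ and $\|\w_\star\|_{\W^0_\star}=\|\nabla q_h\|$, so that the Poincar\'e--Wirtinger inequality on $H^1_{\int=0}(\Omega)$ closes the case. For $s=0$, use the continuous Ne\v{c}as inf-sup (Bogovskii) to find $\v\in\H^1_0(\Omega)$ with $-\nabla\cdot\v=q_h$ and $\|\v\|_{\H^1(\Omega)}\le C\|q_h\|$, and set
\[
\w_\star:=\pi_{\W_h}\v+\beta\,h^2\,\pi^\perp_{\W_h}(\nabla q_h)\in\W_\star,
\]
with $\beta>0$ to be chosen. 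Integration by parts together with the $L^2$-orthogonality $\pi_{\W_h}\v\perp\pi^\perp_{\W_h}(\nabla q_h)$ gives
\[
(\nabla q_h,\w_\star)=\|q_h\|^2-(\pi^\perp_{\W_h}\nabla q_h,\pi^\perp_{\W_h}\v)+\beta h^2\|\pi^\perp_{\W_h}\nabla q_h\|^2.
\]
The approximation bound $\|\pi^\perp_{\W_h}\v\|\le Ch\|\v\|_{\H^1(\Omega)}$ from \eqref{approx-velocity}, together with Young's inequality, allows one to absorb the cross term into the other two for $\beta$ large enough, giving $(\nabla q_h,\w_\star)\ge\tfrac{1}{2}\|q_h\|^2$. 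For the denominator, Corollary \ref{co4.2} with $s=1$, the stability of $\pi_{\W_h}$ in $\H^1(\Omega)$ from \eqref{stability}, and the inverse inequality $h\|\nabla q_h\|\le C\|q_h\|$ together yield $\|\w_\star\|_{\W^1_\star}\le C\|q_h\|$, closing this case.

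For the intermediate case $s\in(0,1)$, read the two endpoint inf-sup inequalities as saying that the inverse of the gradient, $S:\nabla q_h\mapsto q_h$, defined on the (finite-dimensional) range $\nabla Q_h$, is bounded as $(\W^1_\star)^*\to L^2_{\int=0}(\Omega)$ and as $(\W^0_\star)^*\to H^1_{\int=0}(\Omega)$, with constants independent of $h$. Since $\{\W^r_\star\}_{r\in\R}$ is a Hilbert scale generated by fractional powers of the self-adjoint operator $-\Delta_\star$, duality and reiteration give $[(\W^1_\star)^*,(\W^0_\star)^*]_s=(\W^{1-s}_\star)^*$; likewise $[L^2_{\int=0}(\Omega),H^1_{\int=0}(\Omega)]_s=H^s_{\int=0}(\Omega)$. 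Interpolation of $S$ then produces the desired inequality $\|q_h\|_{H^s(\Omega)}\lesssim\|\nabla q_h\|_{(\W^{1-s}_\star)^*}$ with an $h$-uniform constant.

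I expect the main obstacle to be the $s=0$ step: the $h$-scaling $\beta h^2$ of the subgrid component of $\w_\star$ is the one that simultaneously permits absorption of the cross term (via the $L^2$ approximation property of $\pi_{\W_h}$) and cancels against the inverse inequality in the $\W^1_\star$-norm estimate; no weaker scaling works. A secondary point worth checking carefully is the validity of the duality/reiteration identity $[(\W^1_\star)^*,(\W^0_\star)^*]_s=(\W^{1-s}_\star)^*$ with constants independent of $h$, which is where the Hilbert-scale structure of $\{\W^r_\star\}$ is essential.
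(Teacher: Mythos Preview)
Your endpoint constructions for $s=0$ and $s=1$ are correct and clean. The gap is in the interpolation step. Inf-sup conditions are \emph{lower} bounds for the operator $G:q_h\mapsto\nabla q_h$, and lower bounds do not interpolate automatically. Your device of passing to the inverse $S=G^{-1}:\nabla Q_h\to Q_h$ and interpolating its upper bounds fails because the domain is the \emph{subspace} $\nabla Q_h\subset\W_\star$: the $K$-functional on a subspace always dominates the ambient one, so interpolation of $S$ only gives
\[
\|q_h\|_{H^s(\Omega)}\le C\,\|\nabla q_h\|_{[\,\nabla Q_h|_{\W^{-1}_\star},\,\nabla Q_h|_{\W^{0}_\star}\,]_s},
\]
which is the wrong direction: you would still need $\|\cdot\|_{[\nabla Q_h]_s}\le C\,\|\cdot\|_{\W^{s-1}_\star}$ on $\nabla Q_h$, i.e.\ that $\nabla Q_h$ is complemented in the $\W^r_\star$-scale with an $h$-uniform projector. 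You have not supplied such a projector, and there is no obvious candidate. The ``secondary point'' you flag (duality/reiteration on the $\W^r_\star$ scale) is actually fine; the real obstacle is this subspace-interpolation issue.

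The paper sidesteps the difficulty entirely by proving the inequality \emph{directly for every} $s\in[0,1]$. It starts from the continuous fractional inf-sup $\sup_{\w\in\tilde\H^{1-s}_0}\frac{(\nabla q_h,\w)}{\|\w\|_{\tilde\H^{1-s}_0}}\ge\|q_h\|_{H^s}$, tests with $\pi_{\W_h}\w$, and uses the $\tilde\H^{1-s}_0$-stability \eqref{stability} and approximation \eqref{approx-velocity} of $\pi_{\W_h}$ to obtain
\[
\sup_{\w_h\in\W_h}\frac{(\nabla q_h,\w_h)}{\|\w_h\|_{\tilde\H^{1-s}_0}}\ \ge\ C_1\|q_h\|_{H^s}-C_2\,h^{1-s}\|\pi^\perp_{\W_h}\nabla q_h\|.
\]
The defect term is then absorbed by testing in $\tilde\W_h$ (using $\pi^\perp_{\W_h}(\nabla Q_h)\subset\tilde\W_h$), which contributes precisely $h^{1-s}\|\pi^\perp_{\W_h}\nabla q_h\|$ to the supremum over $\W_\star$ once one invokes the norm equivalence of Lemma~\ref{lm4.1}. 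No interpolation of inf-sup constants is required. If you want to salvage your approach, you would have to construct an $h$-uniform extension of $S$ to all of $\W_\star$ bounded in both endpoint norms, or equivalently exhibit a uniform projector onto $\nabla Q_h$ in the $\W^r_\star$-scale; neither is immediate.
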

\begin{proof} Let $q_h\in Q_h$. From \eqref{stability} and \eqref{approx-velocity}, we have 
$$
\begin{array}{rcl}
\displaystyle
\sup_{\w_h\in \W_h\backslash \{ \boldsymbol 0\}} \frac{(\nabla q_h, \w_h)}{\|\w_h\|_{ \tilde\H^{1-s}_0(\Omega)}} &\ge& \displaystyle \sup_{\w\in \tilde\H_0^{1-s}(\Omega)\backslash  \{ \boldsymbol 0\}} \frac{(\nabla q_h, \pi_{\W_h}\w)}{\|\pi_{\W_h}\w\|_{\tilde\H^{1-s}_0(\Omega)}}\ge C_{\rm st}^{-1} \sup_{\w\in \tilde\H_0^{1-s}(\Omega)\backslash  \{ \boldsymbol 0\}} \frac{(\nabla q_h, \pi_{\W_h}\w)}{\|\w\|_{\tilde\H^{1-s}_0(\Omega)}}
\\
&\ge& C_{\rm st}^{-1} \displaystyle\sup_{\w\in \tilde\H^{1-s}_0\backslash  \{ \boldsymbol 0\}} \frac{(\nabla q_h, \w)}{\|\w\|_{\tilde\H^{1-s}_0(\Omega)}}- C_{\rm st}^{-1}\sup_{\w\in \tilde\H^s_0\backslash  \{ \boldsymbol 0\}} \frac{(\nabla q_h, \pi_{\W_h}^\perp\w)}{\|\pi_{\W_h}\w\|_{\tilde\H^{1-s}_0(\Omega)}}
\\
&\ge&\displaystyle  C_{\rm st}^{-1} \|q_h \|_{H^s(\Omega)} - C_{\rm st}^{-1} C_{\rm int} h^{1-s} \|\nabla q_h\|.
\end{array}
$$
Moreover, we have, by \eqref{inverseH^s-L^2andL^2-H^-s}, 
$$
\sup_{\w_h \in  \W_h\backslash  \{ \boldsymbol 0\}} \frac{\left({\nabla q_h},{\w_h}\right)}{\|\w_h \|_{\tilde\H^{1-s}_0(\Omega)}} \ge
\frac{\|\pi_{\W_h}\nabla q_h\|^2}{\|\pi_{\W_h}\nabla q_h\|_{\tilde\H^{1-s}_0(\Omega)}} \ge C_{\rm inv }^{-1} h^{1-s} \| \pi_{\W_h}\nabla q_h \|.
$$
Combining the previous two estimates, we get
$$
\sup_{\w_h \in \W_h\backslash \{ \boldsymbol 0\}} \frac{\left({\nabla q_h},{\w_h}\right)}{\|\w_h \|_{\tilde\H^{1-s}_0(\Omega)}} \ge C_1 \|q_h \|_{H^s(\Omega)} - C_2 h^{1-s} \|\pi_{\W_h}^\perp\nabla q_h\|,
$$
where $C_1^{-1}= C_2^{-1} C_{\rm int}$ and $C_2^{-1}=(C_{\rm st} +C_{\rm int} C_{\rm inv})$. 
Since $\pi^\perp_{\W_h}(\nabla Q_h) \subset \tilde \W_h$, we find that 
$$
\sup_{\tilde\w_h \in  \tilde \W_h\backslash  \{ \boldsymbol 0\}} \frac{({\nabla q_h},{\tilde \w_h})}{h^{s-1}\| \tilde\w_h \|} \ge  h^{1-s} \|\pi_{\W_h}^\perp\nabla q_h\|.
$$
At this point, we have proved that 
$$
\sup_{\w_h \in  \W_h\backslash  \{ \boldsymbol 0\}} \frac{\left({\nabla q_h},{\w_h}\right)}{\|\w_h \|_{\tilde\H^{1-s}_0(\Omega)}} + C_2\sup_{\tilde\w_h \in  \tilde \W_h\backslash  \{ \boldsymbol 0\}} \frac{\left({\nabla q_h},{\tilde \w_h}\right)}{h^{s-1}\| \tilde\w_h \|} \ge C_1 \|q_h\|_{H^s(\Omega)}.
$$
Finally, observe, by \eqref{lm4.1-lab1}, that  
$$
\sup_{\w_\star \in  \W_\star\backslash \{ \boldsymbol 0\}} \frac{\left({\nabla q_h},{\w_\star}\right)}{\|\w_\star \|_{\W^{1-s}_\star}}\ge  \sup_{\w_h \in  \W_h\backslash \{ \boldsymbol 0\}} \frac{\left({\nabla q_h},{\w_h}\right)}{\|\tilde\w_h \|_{\tilde\H_0^{1-s}(\Omega)}}
$$
and
$$
\sup_{\w_\star \in  \W_\star\backslash \{ \boldsymbol 0\}} \frac{\left({\nabla q_h},{\w_\star}\right)}{\|\w_\star \|_{\W^{1-s}_\star}}\ge  \sup_{\tilde\w_h\in \tilde \W_h\backslash \{ \boldsymbol 0\}} \frac{\left(\nabla q_h,\tilde\w_h\right)}{h^{s-1}\|\w_h \|}.
$$
Then it follows that 
$$
(1+C_2)\sup_{\w_\star \in  \W_\star\backslash \{ \boldsymbol 0\}} \frac{\left({\nabla q_h},{\w_\star}\right)}{\|\w_\star \|_{\W^{1-s}_\star}}\ge C_1\|q_h\|_{H^s(\Omega)}
$$
or equivalently, for any $q_h \in Q_h$, there exists an element $\w_\star \in \W_\star$ with norm $\|\w_\star\|_{\W^{1-s}_\star} = 1$, such that $\left( \nabla q_h , v_\star \right) \ge C \|q_h \|_{H^s(\Omega)}$. It easily proves the lemma.
\end{proof}
\section{A priori energy estimates}\label{sec:energy}
In this section we derive a series of a priori energy estimates resulting in that the sequence of the approximate solutions $(\u_h, p_h)$ to scheme (\ref{eq:GalerkinNS}) approaches to a weak solution and a suitable weak solution in Section \ref{weaksol}.   
\begin{theorem} Under assumptions $\rm (H1)$, $\rm (H2)$, and $\rm (H4)$, there is a positive constant $C$, independent of $h$, such that 
\begin{equation}\label{energy-estimate}
\|\u_h\|_{L^\infty(0,T; \L^2(\Omega))\cap L^2(0,T; \H^1_0(\Omega))}+\|\tilde\u_h\|_{L^\infty(0,T; \L^2(\Omega))\cap \tau^{-1}L^2(0,T; \L^2(\Omega))}\le C.
\end{equation}
\end{theorem}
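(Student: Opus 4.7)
The plan is to derive the estimate by testing the three discrete equations with the natural candidates and summing them, exploiting the cancellation structure built into the scheme.

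First, I would test \eqref{eq:Galerkin-u} with $\v_h = \u_h$, \eqref{eq:Galerkin-p} with $q_h = p_h$, and \eqref{eq:Sub-Scale} with $\tilde\v_h = \tilde\u_h$. The symmetric skew form of $\mathcal{N}$, namely $b(\u_h,\v,\v) = 0$ for $\v \in \L^2$ (a standard consequence of $(\u_h\cdot\nabla)\v\cdot\v + \tfrac{1}{2}(\nabla\cdot\u_h)|\v|^2$ being a total divergence up to the boundary, and $\u_h\in\W_h\subset\H^1_0$), kills the convective term $b(\u_h,\u_h,\u_h)$ in \eqref{eq:Galerkin-u}. Moreover, the cross term $-b(\u_h,\u_h,\tilde\u_h)$ coming from \eqref{eq:Galerkin-u} precisely cancels against the term $+b(\u_h,\u_h,\tilde\u_h)$ obtained from \eqref{eq:Sub-Scale}. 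This is the reason for introducing the cross-stabilization term in \eqref{eq:Galerkin-u}.

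Next, I would deal with the pressure coupling. Integrating by parts (using $\u_h\in\H^1_0(\Omega)$) we have $(p_h,\nabla\cdot\u_h) = -(\nabla p_h,\u_h)$. Then testing \eqref{eq:Galerkin-p} with $q_h=p_h$ yields
\begin{equation*}
-(p_h,\nabla\cdot\u_h) + (\nabla p_h,\tilde\u_h) = 0,
\end{equation*}
so that the pressure term $-(p_h,\nabla\cdot\v_h)$ in \eqref{eq:Galerkin-u} and the term $(\nabla p_h,\tilde\v_h)$ in \eqref{eq:Sub-Scale} together vanish after summation. Adding the three tested equations then gives the energy identity
\begin{equation*}
\frac{1}{2}\frac{d}{dt}\bigl(\|\u_h\|^2+\|\tilde\u_h\|^2\bigr) + \nu\|\nabla\u_h\|^2 + \tau^{-1}\|\tilde\u_h\|^2 = \langle \f, \u_h\rangle.
\end{equation*}

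Finally I would bound the right-hand side by $\|\f\|_{\H^{-1}(\Omega)}\|\nabla\u_h\|$ and apply Young's inequality to absorb a factor $\tfrac{\nu}{2}\|\nabla\u_h\|^2$ into the left-hand side. Integration over $(0,t)$ with $t\le T$ then yields the required $L^\infty_t\L^2_\x\cap L^2_t\H^1_\x$ bound for $\u_h$ and the corresponding $L^\infty_t\L^2_\x$ and weighted $L^2$ bound for $\tilde\u_h$, provided the initial data contribution $\tfrac{1}{2}(\|\u_{0h}\|^2+\|\tilde\u_{0h}\|^2)$ is controlled by $\|\u_0\|^2$. For the latter, I would test the projection problem \eqref{proj_u0} with $(\v_h,\tilde\v_h,q_h) = (\u_{0h},\tilde\u_{0h},\xi_h)$ and sum; the pressure-like terms involving $\xi_h$ cancel and one obtains $\|\u_{0h}\|^2+\|\tilde\u_{0h}\|^2 = (\u_0,\u_{0h}+\tilde\u_{0h})$, whence Cauchy--Schwarz gives $\|\u_{0h}\|^2+\|\tilde\u_{0h}\|^2 \le 2\|\u_0\|^2$.

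No step here is a serious obstacle; the only subtlety is verifying that every non-diffusive term cancels exactly, which is an algebraic matter. The mildly delicate point is making sure the integration by parts in the pressure coupling is clean, but this is immediate because $\u_h$ vanishes on $\partial\Omega$ while no boundary condition is imposed on $\tilde\u_h$ (the duality $(\nabla p_h,\tilde\u_h)$ appears directly without integration by parts). The whole argument is thus a direct energy computation in the spirit of the Galerkin estimate for the continuous Navier--Stokes system, modified to accommodate the dynamic subgrid variable.
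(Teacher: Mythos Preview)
Your proposal is correct and follows exactly the paper's approach: test \eqref{eq:Galerkin-u}, \eqref{eq:Galerkin-p}, \eqref{eq:Sub-Scale} with $\u_h$, $p_h$, $\tilde\u_h$ respectively, exploit the built-in cancellations to obtain the energy identity, and then bound the forcing term via Young's inequality and integrate in time. You actually spell out more than the paper does (the cancellation mechanism for the convective and pressure terms, and the control of $\|\u_{0h}\|^2+\|\tilde\u_{0h}\|^2$ from \eqref{proj_u0}), but the argument is the same.
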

\begin{proof} Take $\v_h=\u_h$ in (\ref{eq:Galerkin-u}), $\tilde \v_h=\tilde\u_h$ in (\ref{eq:Sub-Scale}) and $q_h=p_h$ in (\ref{eq:Galerkin-p}) to get
$$
\frac{1}{2} \frac{\rm d}{\dt} \left( \| \u_h\|^2 +\|\tilde\u_h\|^2
\right) + \nu \| \nabla \u_h \|^2+ \tau^{-1} \| \tilde\u_h \|^2 =
(\f_h , \u_h).
$$
Next we estimate the term $(\f_h , \u_h)$. Thus, we have
$$
(\f_h , \u_h )\le \|\f\|_{\H^{-1}(\Omega)} \|\nabla\u_h\|\le \frac{ 1}{2\nu} \|\f\|^2_{\H^{-1}(\Omega)}+\frac{\nu}{2}\|\nabla\u_h\|^2.
$$
Therefore, we obtain
$$
\frac{\rm d}{\dt} \left( \| \u_h\|^2 + \|\tilde\u_h\|^2 \right)  + \nu \| \nabla \u_h \|^2 + {\tau^{-1}} \| \tilde\u_h \|^2 \le \frac{1}{\nu}\| \f \|^2_{\H^{-1}(\Omega)},
$$
which, integrated over $(0,t)$, leads to the desired result.
\end{proof}
\begin{remark}From (\ref{energy-estimate}), we have 
\begin{equation}\label{est_LrLk}
\|\u_h\|_{L^r(0,T; \H^{2/r}(\Omega))}+\|\u_h\|_{L^r(0,T;\L^k(\Omega))}\le C,
\end{equation}
where $\frac{3}{k}+\frac{2}{r}=\frac{3}{2}$ with $r\in[2,\infty]$ and $k\in[2, 6]$. The proof is based on the interpolation inequality between $\L^2(\Omega)$ and $\H^1(\Omega)$, i.e., $\|\w\|_{\H^{\frac{2}{r}}(\Omega)}\le C \|\w\|^{1-\frac{2}{r}}_{\L^2(\Omega)} \|\w\|^{\frac{2}{r}}_{\H^1(\Omega)}$ for $r\in[2,\infty]$, and the Sobolev embedding $ \H^\frac{2}{r}(\Omega) \hookrightarrow \L^k(\Omega)$ for $\frac{1}{k}=\frac{1}{2}-\frac{2}{3r}$ and $r>\frac{4}{3}$.
\end{remark}

Consider $\u_\star=\u_h+\tilde\u_h$. Then problem (\ref{eq:GalerkinNS}) reads as follows. Find $\u_\star\in H^1(0,T; \W_\star)$ and $p_h\in L^2(0,T; Q_h)$ such that
\begin{equation}\label{Sec5-Lab1}
(\partial_t \u_\star,\w_\star) +b_\star(\u_\star,\u_\star, \w_\star)
-\nu(\Delta_\star\u_\star,\w_\star)-(\nabla p_h, \w_\star)= (\f_h,\pi_{\W_h}(\w_\star)),
\end{equation}
for all $\w_\star\in \W_\star$, with the initial condition $\u_\star(0)=\u_{0h} + \tilde\u_{0h}$, where 
$$
b_\star(\u_\star,\v_\star, \w_\star)=b(\pi_{\W_h}\u_\star,\pi_{\W_h}\u_\star,\w_\star)-b (\pi_{\W_h}\u_\star, \pi_{\W_h}\w_\star,\pi^\perp_{\W_h} \u_\star)+ \tau^{-1}_\infty (\pi_{\W_h}^\perp\u_\star,\pi_{\W_h}^\perp \w_\star )
$$
with $\tau_\infty^{-1}=C_c \frac{\|\pi_{\W_h}\u_\star\|_{L^\infty(\Omega)}}{h}$.
We now define the operator $ \mathcal {N}_\star:\W_\star\times \W_\star\to \W_\star^{-1}$ via duality by the formula
$$
\langle \mathcal {N}_\star(\u_\star, \v_\star), \w_\star\rangle=b_\star(\u_\star,\v_\star, \w_\star). 
$$
\begin{lemma} Suppose that  assumptions $(\rm H1)$-$(\rm H3)$ hold. Let $q$ be as in $\rm(H4)$, but $q\in[1,\frac{3}{2})$, and let $s$ be as in $\rm (C)$, i.e, $s\in[\frac{1}{2},\frac{3}{2})$.  Then it follows that 
$$\|{\mathcal N}_\star(\u_\star, \u_\star)\|_{ \W^{-s}_\star}\le C \|\pi_{\W_h}\u_\star\|_{\L^k(\Omega)} (\|\nabla\pi_{\W_h} \u_\star\| +\tau^{-\frac{1}{2}} \|\pi_{\W_h}^\perp\u_\star\| )$$
with $\frac{1}{k}+\frac{1}{2}=\frac{1}{q}$.
\end{lemma}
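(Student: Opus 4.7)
The idea is to bound the dual norm by duality: for every test function $\w_\star = \w_h + \tilde\w_h \in \W_\star$, I will establish
$$
|b_\star(\u_\star,\u_\star,\w_\star)| \le C\,\|\pi_{\W_h}\u_\star\|_{\L^k(\Omega)}\bigl(\|\nabla\pi_{\W_h}\u_\star\| + \tau^{-\frac{1}{2}}\|\pi_{\W_h}^\perp\u_\star\|\bigr)\|\w_\star\|_{\W^s_\star},
$$
and then take the supremum over $\w_\star$ normalized in $\W^s_\star$. Expanding the definition of $b_\star$ yields the four pieces
$$
b_\star(\u_\star,\u_\star,\w_\star)=b(\u_h,\u_h,\w_h)+b(\u_h,\u_h,\tilde\w_h)-b(\u_h,\w_h,\tilde\u_h)+\tau_\infty^{-1}(\tilde\u_h,\tilde\w_h),
$$
and I will control each piece separately. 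The central ingredients are Corollary \ref{co4.2}, which gives $\|\w_h\|_{\tilde\H_0^s(\Omega)}+h^{-s}\|\tilde\w_h\|\le C\|\w_\star\|_{\W^s_\star}$; the Sobolev embedding $\tilde\H_0^s(\Omega)\hookrightarrow\L^{q'}(\Omega)$ with $\tfrac{1}{q'}=\tfrac{1}{2}-\tfrac{s}{3}=\tfrac{1}{2}-\tfrac{1}{k}$ (using $s=\tfrac{3}{k}$); and the inverse inequalities of Hypothesis (H3)(a), applied both on $\W_h$ and, via the finite-dimensional polynomial structure of $\tilde\W_h$ on the quasi-uniform mesh, on $\tilde\W_h$ in the form $\|\tilde\z_h\|_{\L^p(\Omega)}\le Ch^{3(1/p-1/2)}\|\tilde\z_h\|$ for $p\ge 2$.

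Each piece is estimated by three-factor H\"older with exponents $(k,2,q')$, arranged so that $\u_h$ always appears in $\L^k(\Omega)$. For $b(\u_h,\u_h,\w_h)$ and $b(\u_h,\u_h,\tilde\w_h)$ the derivative rests naturally on $\u_h$, yielding $\|\u_h\|_{\L^k}\|\nabla\u_h\|$ times either $\|\w_h\|_{\L^{q'}}\le C\|\w_h\|_{\tilde\H^s_0}\le C\|\w_\star\|_{\W^s_\star}$ or $\|\tilde\w_h\|_{\L^{q'}}\le Ch^{-s}\|\tilde\w_h\|\le C\|\w_\star\|_{\W^s_\star}$. The divergence subterm $\tfrac12((\nabla\cdot\u_h)\u_h,\cdot)$ is handled with the same triple (roles of $\u_h$ and the third argument interchanged), producing an identical bound $C\|\u_h\|_{\L^k}\|\nabla\u_h\|\|\w_\star\|_{\W^s_\star}$.

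The third piece $b(\u_h,\w_h,\tilde\u_h)$ cannot be rewritten by skew-symmetry because $\tilde\u_h$ lacks $\H^1_0$-regularity, so every gradient must stay on $\u_h$ or $\w_h$. Its convective subterm is bounded by $\|\u_h\|_{\L^k}\|\nabla\w_h\|\|\tilde\u_h\|_{\L^{q'}}$; using \eqref{inverseH^1-H^s}, $\|\nabla\w_h\|\le Ch^{-1+s}\|\w_h\|_{\tilde\H^s_0}$, together with the $\tilde\W_h$-inverse estimate $\|\tilde\u_h\|_{\L^{q'}}\le Ch^{-s}\|\tilde\u_h\|$, this collapses to $C\|\u_h\|_{\L^k}h^{-1}\|\tilde\u_h\|\|\w_\star\|_{\W^s_\star}$. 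For the divergence subterm I switch the H\"older triple to $(k,q',2)$ and apply \eqref{inverse-GradL^k-L^k}, $\|\nabla\u_h\|_{\L^k}\le Ch^{-1}\|\u_h\|_{\L^k}$, obtaining the same bound. The stabilization term is estimated by $\tau_\infty^{-1}\|\tilde\u_h\|\|\tilde\w_h\|$ with $\tau_\infty^{-1}\le Ch^{-1}\|\u_h\|_{\L^\infty}\le Ch^{-1-3/k}\|\u_h\|_{\L^k}=Ch^{-1-s}\|\u_h\|_{\L^k}$ by \eqref{inverseL^inf-L^k}, combined with $\|\tilde\w_h\|\le Ch^s\|\w_\star\|_{\W^s_\star}$; this again yields $C\|\u_h\|_{\L^k}h^{-1}\|\tilde\u_h\|\|\w_\star\|_{\W^s_\star}$.

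The final step converts the residual factor $h^{-1}\|\tilde\u_h\|$ from the last three estimates into $\tau^{-\frac12}\|\tilde\u_h\|$ via the lower bound $\tau^{-1}\ge C_s\nu h^{-2}$ built into the definition of $\tau$, which gives $h^{-1}\le (C_s\nu)^{-1/2}\tau^{-1/2}$. Summing the four bounds and passing to the supremum finishes the argument. The main technical obstacle is the low regularity of $\tilde\u_h\in\tilde\W_h\subset\L^2(\Omega)$, which rules out the usual skew-symmetric manipulation of $b$; every gradient must be moved onto a finite-element function at the cost of an extra inverse inequality, and the role of the stabilization parameter $\tau$ is precisely to absorb this cost into the $\tau^{-1/2}\|\tilde\u_h\|$ factor in the statement.
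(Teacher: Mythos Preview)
Your argument is correct and follows the same duality-plus-H\"older strategy as the paper, but with one organizational difference worth noting. The paper keeps the subgrid factors $\tilde\u_h,\tilde\w_h$ in $\L^2(\Omega)$ throughout: for $b(\u_h,\u_h,\w_\star)$ it bounds $\|\mathcal{N}(\u_h,\u_h)\|_{\W^{-s}_\star}$ directly via the dual form of Corollary~\ref{co4.2} as $C\|\mathcal{N}(\u_h,\u_h)\|_{\tilde\H^{-s}_0(\Omega)}+Ch^{s}\|\mathcal{N}(\u_h,\u_h)\|$, and for the cross term $b(\u_h,\w_h,\tilde\u_h)$ it uses the H\"older triple $(\infty,2,2)$, i.e.\ $\|\u_h\|_{\L^\infty(\Omega)}\|\nabla\w_h\|\|\tilde\u_h\|$, followed by the inverse inequality \eqref{inverseL^inf-L^k} on $\u_h\in\W_h$ alone. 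Your route instead places $\tilde\w_h$ and $\tilde\u_h$ in $\L^{q'}(\Omega)$, which needs inverse estimates of the form $\|\tilde\z_h\|_{\L^{q'}(\Omega)}\le Ch^{-s}\|\tilde\z_h\|$ on the subgrid space $\tilde\W_h$. These are \emph{not} among the stated hypotheses (H3)(a), which concern $\W_h$ only; they are nonetheless valid because $\tilde\W_h$ is spanned by piecewise polynomials of bounded degree on the same quasi-uniform mesh (each generator $\pi_{\W_h}^\perp(\mathcal N(\boldsymbol\phi_i,\boldsymbol\phi_j))$ or $\pi_{\W_h}^\perp(\nabla\phi_k)$ is a piecewise polynomial minus its projection onto $\W_h$), so the usual local scaling argument applies. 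The paper's arrangement stays strictly within the listed assumptions; yours is more symmetric in the H\"older exponents but relies on this additional, though routine, observation about $\tilde\W_h$.
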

\begin{proof} By definition of $b(\cdot, \cdot, \cdot)$, we have
$$
b(\pi_{\W_h}\u_\star, \pi_{\W_h}\u_\star, \w_\star )=\langle\mathcal{N}(\pi_{\W_h}\u_\star,\pi_{\W_h}\u_\star), \w_\star\rangle\le \|{\mathcal N}(\pi_{\W_h}\u_\star, \pi_{\W_h}\u_\star)\|_{ \W^{-s}_\star} \|\w_\star\|_{\W^s_\star},
$$
which, combined with \eqref{lm4.1-lab1}, gives  
$$
\|{\mathcal N}(\pi_{\W_h}\u_\star, \pi_{\W_h}\u_\star)\|_{ \W^{-s}_\star}\le C \|{\mathcal N}(\pi_{\W_h}\u_\star, \pi_{\W_h}\u_\star)\|_{\tilde\H^{-s}_0(\Omega)}+ C h^{s} \|{\mathcal N}(\pi_{\W_h}\u_\star, \pi_{\W_h}\u_\star)\|.$$ 
From the continuous embedding $\tilde\H^s_0(\Omega) \hookrightarrow \L^{q'}(\Omega)$, with $\frac{1}{q'}+\frac{1}{q}=1$, we bound    
$$
\|{\mathcal N}(\pi_{\W_h}\u_\star, \pi_{\W_h}(\u_\star))\|_{ \W^{-s}_\star}\le C \|{\mathcal N}_h(\pi_{\W_h}\u_\star, \pi_{\W_h}\u_\star)\|_{ \L^q(\Omega)}+h^{s} \|{\mathcal N}(\pi_{\W_h}\u_\star, \pi_{\W_h}\u_\star)\|.$$ 
Next, using  (\ref{inverseL^inf-L^k}), the definition of $s$, and the relation $\frac{1}{k}+\frac{1}{2}=\frac{1}{q}$ gives
$$
\|{\mathcal N}(\pi_{\W_h}\u_\star, \pi_{\W_h}\u_\star)\|_{ \L^q(\Omega)}\le \|\nabla\pi_{\W_h}\u_\star\| \|\pi_{\W_h}\u_\star\|_{\L^k(\Omega)},
$$
and 
$$
h^{s} \|{\mathcal N}_h(\pi_{\W_h}\u_\star, \pi_{\W_h}\u_\star)\| \le  C h^{s} \|\pi_{\W_h}\u_\star\|_{\L^\infty(\Omega)} \|\nabla\pi_{\W_h}\u_\star\| \le C \|\nabla \pi_{\W_h}\u_\star\| \|\pi_{\W_h}\u_\star\|_{\L^k(\Omega)},
$$
which imply that 
$$
b(\pi_{\W_h}\u_\star, \pi_{\W_h}\u_\star, \w_\star )\le  \|\nabla\pi_{\W_h}\u_\star\| \|\pi_{\W_h}\u_\star\|_{\L^k(\Omega)} \|\w_\star\|_{\W^s_\star}.
$$

Next, for $s\in[\frac{1}{2},1]$, we write
$$
\begin{array}{rcl}
b(\pi_{\W_h}\u_\star, \pi_{\W_h}\w_\star, \pi^\perp_{\W_h}\u_\star)&=&((\pi_{\W_h}\u_\star\cdot\nabla) \pi_{\W_h}\w_\star, \pi_{\W_h}^\perp\u_\star)
\\
&&+\frac{1}{2} (\nabla\cdot\pi_{\W_h}\u_\star\, \pi_{\W_h}\w_\star ,\pi_{\W_h}^\perp\u_\star)
\\
&\le&\|\pi_{\W_h}\u_\star\|_{\L^\infty(\Omega)}  \|\nabla\pi_{\W_h}\w_\star\| \|\pi_{\W_h}^\perp\u_\star\|
\\
&&+\frac{1}{2}\|\nabla\cdot\pi_{\W_h}\u_\star\|_{\L^k(\Omega)}\|\pi_{\W_h}\w_\star\|_{\L^\infty(\Omega)}\|\pi_{\W_h}^\perp\u_\star\| 
\\
&\le& C h^{-\frac{3}{k}} \|\pi_{\W_h}\u_\star\|_{\L^k(\Omega)} h^{-1+s}\|\pi_{\W_h}\w_\star\|_{\tilde\H^s_0(\Omega)}\|\pi_{\W_h}^\perp\u_\star\|
\\
&&+C h^{-1} \|\pi_{\W_h}\u_\star\|_{\L^k(\Omega)}  \|\pi_{\W_h}\w_\star\|_{\L^{q'}(\Omega)} \|\pi_{\W_h}^\perp\u_\star\|  
\\
&\le& C \|\pi_{\W_h}\u_\star\|_{\L^k(\Omega)}\tau^{-\frac{1}{2}}\|\pi_{\W_h}^\perp\u_\star\|  \|\pi_{\W_h}\w_\star\|_{\tilde\H^s_0(\Omega)}
\\
&\le& \|\pi_{\W_h}\u_\star\|_{\L^k(\Omega)}\tau^{-\frac{1}{2}}\|\pi_{\W_h}^\perp\u_\star\| \|\w_\star\|_{\W^s_\star},
\end{array}
$$
where we have utilized \eqref{inverseL^inf-L^k} and \eqref{inverseH^1-H^s}, the relation $\frac{1}{k}+\frac{1}{2}=\frac{1}{q}$ and the definition of $s$. Moreover, we have also utilized $\|\pi_{\W_h}\u_\star\|_{\L^{q'}(\Omega)}\le  C\|\pi_{\W_h}\u_\star\|_{\tilde\H^s_0(\Omega)}$,  $q'\in[3,\infty]$, and the relation \eqref{co4.2-lab1}. The estimate for $s\in(1,\frac{3}{2})$ follows readily from applying the above arguments and the continuous embedding $\tilde\H^s_0(\Omega) \hookrightarrow \H^{1}_0(\Omega)$.    

Finally,  
$$
\begin{array}{rcl}
\tau_\infty^{-1} (\pi_{\W_h}^\perp \u_\star , \pi_{\W_h}^\perp\w_\star )&\le& \tau_\infty^{-1} \|\pi_{\W_h}^\perp\u_\star\| \|\pi_{\W_h}^\perp\u_\star\|
\\
&=&C_c h^{-1}\|\pi_{\W_h}\u_\star\|_{\L^\infty(\Omega)} \|\pi_{\W_h}^\perp\u_\star\| \|\pi_{\W_h}^\perp\w_\star\|
\\
&\le& C \|\pi_{\W_h}\u_\star\|_{\L^k(\Omega)} \|\pi_{\W_h}^\perp\u_\star\| h^{-s} \|\pi_{\W_h}^\perp\w_\star\|.
\end{array}
$$
A duality argument shows then that  
$$ \|{\mathcal N}_\star(\u_\star, \u_\star)\|_{\W^{-s}_\star}\le C (\tau^{-\frac{1}{2}}\|\pi_{\W_h}^\perp\u_\star\|+\|\nabla\pi_{\W_h}\u_\star\|)\|\pi_{\W_h}\u_\star\|_{\L^k(\Omega)}.$$

\end{proof}
\begin{corollary}Suppose that  assumptions $(\rm H1)$-$(\rm H3)$ hold. Let $p$ be as in $({\rm H4})$, but $p\in[1,2)$, and  $s$ be as in $(\rm C)$, but $s\in[\frac{1}{2}, \frac{3}{2})$. Then there exists $C>0$, independent of $h$, such that 
$$
\|{\mathcal N}_\star(\u_\star, \u_\star)\|_{ L^{p}(0,T; \W^{-s}_\star)}\le C
$$
\end{corollary}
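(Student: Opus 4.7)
The plan is to combine the pointwise-in-time bound of the preceding lemma with the a priori energy estimates, using Hölder's inequality in time. Since $\pi_{\W_h}\u_\star = \u_h$ and $\pi_{\W_h}^\perp\u_\star = \tilde\u_h$ by construction, the preceding lemma gives
$$
\|{\mathcal N}_\star(\u_\star,\u_\star)\|_{\W^{-s}_\star} \le C\,\|\u_h\|_{\L^k(\Omega)}\bigl(\|\nabla\u_h\| + \tau^{-\frac{1}{2}}\|\tilde\u_h\|\bigr),
$$
with $k$ determined by $\tfrac{1}{k} + \tfrac{1}{2} = \tfrac{1}{q}$. Raising to the $p$-th power, integrating on $(0,T)$, and applying Hölder with conjugate exponents $2/p$ and $2/(2-p)$, I would obtain
$$
\|{\mathcal N}_\star(\u_\star,\u_\star)\|_{L^{p}(0,T;\W^{-s}_\star)} \le C\,\|\u_h\|_{L^{r}(0,T;\L^k(\Omega))}\bigl(\|\nabla\u_h\|_{L^2(0,T;\L^2(\Omega))} + \|\tau^{-\frac{1}{2}}\tilde\u_h\|_{L^2(0,T;\L^2(\Omega))}\bigr),
$$
where $r = 2p/(2-p)$.

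The second factor on the right is uniformly bounded in $h$ thanks to the energy estimate \eqref{energy-estimate} of Theorem 5.1. For the first factor, the interpolation bound \eqref{est_LrLk} of Remark 5.2 provides the required control, provided the exponents satisfy the Ladyzhenskaya-type constraint $\tfrac{3}{k} + \tfrac{2}{r} = \tfrac{3}{2}$ with $k \in [2,6]$ and $r \in [2,\infty]$. Substituting $r = 2p/(2-p)$ and using $\tfrac{1}{q} = \tfrac{1}{k} + \tfrac{1}{2}$, this constraint reduces to the scaling relation $\tfrac{2}{p} + \tfrac{3}{q} = 4$, which is exactly the condition imposed on the pair $(p,q)$ in hypothesis (H4).

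The argument is essentially a bookkeeping exercise rather than a deep one, so the only real obstacle is verifying that the chain $p \leftrightarrow q \leftrightarrow (k,r) \leftrightarrow s$ stays inside the admissible ranges at every stage. Concretely, the strict inequality $p<2$ imposed in the corollary corresponds, via $s = 3(\tfrac{1}{q}-\tfrac{1}{2}) = (5p-4)/(2p)$, precisely to the restriction $s<\tfrac{3}{2}$ required by the preceding lemma, while $p \ge 1$ yields $s \ge \tfrac{1}{2}$; the endpoint $q \in [1,3/2)$ of (H4) matches $k \in (2,6]$ and $r \in [2,\infty)$, so all applications of \eqref{est_LrLk} are legitimate. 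Once these ranges are checked, the displayed bound delivers the desired uniform estimate, and the corollary follows immediately.
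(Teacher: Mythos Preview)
Your proof is correct and follows exactly the argument the paper intends: combine the pointwise-in-time estimate of the preceding lemma with H\"older's inequality in time and the $L^{r}(0,T;\L^{k}(\Omega))$ bound \eqref{est_LrLk}, checking that the exponent relations collapse to the scaling condition $\tfrac{2}{p}+\tfrac{3}{q}=4$ of (H4). The paper's own proof is a single sentence pointing to \eqref{est_LrLk}; you have simply written out the bookkeeping it leaves implicit.
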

\begin{proof} The proof follows by using the $L^{r}(0,T; \L^k(\Omega))$ estimate stated in (\ref{est_LrLk}).
\end{proof}
\begin{lemma}\label{lm5.5} Suppose that $(\rm H1)$-$(\rm H3)$  hold. Let $s$ and $\bar r$ be as in $(\rm C)$, but  $s\in[\frac{1}{2}, \frac{3}{2})$ and $\bar r\in[0,\frac{1}{2})$. Then there exists  $C>0$, independent of $h$, such that
\begin{equation}\label{lm5.5-lab1}
\|{\mathcal N}_\star(\u_\star, \u_\star)\|_{ H^{-r}(\R;\W^{-s}_\star)}\le C, 
\end{equation}
with $r>\bar r$.
\end{lemma}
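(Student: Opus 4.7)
The plan is to upgrade the $L^{p}(0,T;\W^{-s}_\star)$ bound furnished by the preceding corollary into the fractional-in-time bound \eqref{lm5.5-lab1}, by invoking the embedding $L^{p}(\R;H)\hookrightarrow H^{-r}(\R;H)$ for $r>\bar r=\frac{1}{p}-\frac{1}{2}$ and $p\in[1,2]$, with $H=\W^{-s}_\star$ regarded as a Hilbert space. This is exactly the embedding that is exploited in Remark 3.1 to put the datum $\f$ into $H^{-r}(0,T;\tilde\H^{-s}_0(\Omega))$; here we apply the same machinery to the convective residual instead of the source.

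First I would let $\widetilde{\mathcal N}_\star$ denote the extension of $\mathcal N_\star(\u_\star,\u_\star)$ by zero outside $(0,T)$. By the preceding corollary, $\widetilde{\mathcal N}_\star\in L^{p}(\R;\W^{-s}_\star)$ with norm bounded uniformly in $h$. Then I would apply the Hausdorff--Young inequality in the time variable, with values in the Hilbert space $\W^{-s}_\star$, to obtain
\begin{equation*}
\|\mathcal F\widetilde{\mathcal N}_\star\|_{L^{p'}(\R;\W^{-s}_\star)}\le C\,\|\widetilde{\mathcal N}_\star\|_{L^{p}(\R;\W^{-s}_\star)},
\end{equation*}
with $\frac{1}{p}+\frac{1}{p'}=1$, so that $p'\ge 2$.

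Next, using the definition of the $H^{-r}(\R;\W^{-s}_\star)$-norm via the Fourier transform and applying Hölder's inequality in $\xi$ with exponents $2/a$ and $p'/2$, where $\frac{1}{a}=\frac{1}{p}-\frac{1}{2}$, I would bound
\begin{equation*}
\|\widetilde{\mathcal N}_\star\|^2_{H^{-r}(\R;\W^{-s}_\star)}=\int_{\R}(1+|\xi|)^{-2r}\|\mathcal F\widetilde{\mathcal N}_\star(\xi)\|^2_{\W^{-s}_\star}\,d\xi\le \bigl\|(1+|\xi|)^{-r}\bigr\|^{2}_{L^{a}(\R)}\,\|\mathcal F\widetilde{\mathcal N}_\star\|^{2}_{L^{p'}(\R;\W^{-s}_\star)}.
\end{equation*}
The weight $(1+|\xi|)^{-r}$ belongs to $L^{a}(\R)$ precisely when $ra>1$, i.e.\ $r>\frac{1}{a}=\frac{1}{p}-\frac{1}{2}=\bar r$. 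Finally, since the norm on $H^{-r}(0,T;\W^{-s}_\star)$ is defined as the infimum over extensions of $\mathcal N_\star(\u_\star,\u_\star)$ to $\R$, the zero extension is admissible and yields \eqref{lm5.5-lab1}.

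The main subtlety, rather than an obstacle, lies in the bookkeeping of parameters: the exponent $p$ from $(\rm H4)$ controls both the integrability in time (through the previous corollary) and the fractional exponent $\bar r$ (through $\bar r=\frac{1}{p}-\frac{1}{2}$), while $q$ controls $s=3(\frac{1}{q}-\frac{1}{2})$. Since the constraint $\frac{2}{p}+\frac{3}{q}=4$ forces $\bar r$ and $s$ to trade off in opposite directions ($s=\frac{1}{2}\Leftrightarrow \bar r=\frac{1}{2}$, while $s\nearrow\frac{3}{2}\Leftrightarrow \bar r\searrow 0$), the ranges $s\in[\frac{1}{2},\frac{3}{2})$ and $\bar r\in[0,\frac{1}{2})$ in the statement are exactly compatible with $p\in[1,2)$ in the preceding corollary, so no loss occurs when combining the two ingredients.
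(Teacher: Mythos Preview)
Your argument is correct and essentially identical to the paper's: extend by zero, apply Hausdorff--Young to land in $L^{p'}(\R;\W^{-s}_\star)$, then use H\"older in $\xi$ against the weight $(1+|\xi|)^{-2r}$, which is integrable to the required power precisely when $r>\bar r=\tfrac{1}{p}-\tfrac{1}{2}$. One cosmetic slip: the H\"older exponents you apply to $(1+|\xi|)^{-2r}$ and $\|\mathcal F\widetilde{\mathcal N}_\star\|^2$ are $a/2$ and $p'/2$ (not $2/a$ and $p'/2$), which is consistent with the displayed bound $\|(1+|\xi|)^{-r}\|_{L^a}^2\,\|\mathcal F\widetilde{\mathcal N}_\star\|_{L^{p'}}^2$ you actually wrote.
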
 
\begin{proof} Let ${\mathfrak N}_\star$ be the extension of ${\mathcal N}_\star$ by zero off $[0,T]$. We have, by the Hausdorff-Young inequality, that 
$$
\begin{array}{rcl}
\|{\mathfrak N}_\star(\u_\star,  \u_\star )\|_{ H^{-r}(\R; \W^{-s}_\star)}&=&\displaystyle\int_{\R} (1+|\xi|)^{-2r}\|{\mathcal F}{\mathfrak N}_\star(\u_\star, \u_\star)\|^2_{ \W^{-s}_\star }d\xi
\\
&\le&\|(1+|\xi|)^{-2r}\|_{L^{\frac{1}{2\bar r}}(\R)} \|{\mathcal F}{\mathfrak N}_\star(\u_\star, \u_\star)\|^2_{ L^{p'}(\R; \W^{-s}_\star)}.
\\
&\le&C \|{\mathfrak N}_\star(\u_\star, \u_\star)\|_{ L^{p}(0,T; \W^{-s}_\star)}.
\end{array}
$$
Observe that $\|(1+|\xi|)^{-2r}\|_{L^{\frac{1}{2\bar r}}(\R)}=\|(1+|\xi|)^{-\frac{r}{\bar r}}\|_{L^1(\R)}< \infty$ if $ \frac{r}{\bar r}>1$. In particular, it holds for $r>\bar r$. It completes the proof.
\end{proof}
Before proceeding with a priori energy estimates for $\partial_t\u_h$ and $\partial_t\tilde \u_h$, let us write (\ref{Sec5-Lab1})  as a nonlinear heat equation
$$
\partial_t\u_\star+\nu A_\star\u_\star =P_\star\g,
$$
where
$$
\g:=\f_h-\mathcal{N}_\star(\u_\star, \u_\star).
$$
\begin{theorem}\label{thm5.6} Suppose that assumptions $(\rm H1)$-$(\rm H4)$ hold. Let $s$ and $\bar r$ be as in $(\rm C)$, but  $s\in[\frac{1}{2}, \frac{3}{2})$ and $\bar r\in[0,\frac{1}{2})$. Then there exists $C>0$, independent of $h$, such that 
\begin{equation}\label{thm5.6-lab1}
\|\partial_t\u_\star\|_{H^{\beta-1}(0,T; \W^{-\alpha}_\star)}+\|\u_\star\|_{H^{\beta}(0,T; \W^{-\alpha}_\star)}\le C,
\end{equation}
for all  $\alpha $ satisfying $0\le\alpha\le s\le 1+2\alpha<2$, and for all $\beta$ satisfying $\beta<\bar\beta:=\frac{1+\alpha}{1+s}(\frac{s}{2}+\frac{1}{4})$. Furthermore, 
\begin{equation}\label{thm5.6-lab2}
\|\Delta_\star\u_\star\|_{H^{-r}(0,T; \W^{-s}_\star)}\le C
\end{equation}
for $r$ satisfying $r>\bar r=\frac{3}{4}-\frac{s}{2}=\frac{1}{p}-\frac{1}{2}$.
\end{theorem}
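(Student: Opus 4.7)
The plan is a Fourier-transform-in-time argument in the spirit of Temam and Guermond-Pasciak \cite{Guermond-Pasciak_2008}, carried out on the stabilized scale $\W_\star^s$. Following the discussion preceding the theorem, one reads (\ref{Sec5-Lab1}) as the heat equation $\partial_t\u_\star+\nu A_\star\u_\star=P_\star\bigl(\f_h-\mathcal{N}_\star(\u_\star,\u_\star)\bigr)$ on the finite-dimensional Hilbert space $\V_\star$. The energy estimate~\eqref{energy-estimate} together with Corollary~\ref{co4.6} (and $\tau^{-1/2}\gtrsim h^{-1}$ which absorbs the subscale) provides $\u_\star\in L^\infty(0,T;\W_\star^0)\cap L^2(0,T;\W_\star^1)$, uniformly in $h$.

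I would extend $\u_\star$ and the source by zero off $[0,T]$ (denoted with tildes); the distributional time-derivative of the extension picks up boundary Dirac masses $\u_\star(0)\delta_0-\u_\star(T)\delta_T$, both uniformly bounded in $\W_\star^0$ by \eqref{energy-estimate} and \eqref{proj_u0}. Fourier-transforming in $t$ and diagonalizing the self-adjoint positive operator $A_\star$ (eigenvalues $\lambda_k>0$), the elementary convexity bound $|2\pi i\xi+\nu\lambda_k|^2\gtrsim|\xi|^{2(1-\theta)}\lambda_k^{2\theta}$ valid for every $\theta\in[0,1]$ combined with the mode-by-mode inversion yields
\[
\|\widehat{\tilde\u_\star}(\xi)\|_{\W_\star^{-\alpha}}^2\;\le\;C\,|\xi|^{-2(1-\theta)}\,\|\widehat G(\xi)\|_{\W_\star^{-\alpha-2\theta}}^2,
\]
where $\widehat G$ gathers the Fourier transforms of $P_\star\widetilde{\f_h}$, $P_\star\widetilde{\mathcal{N}_\star(\u_\star,\u_\star)}$, and the two boundary masses. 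Specializing to $\theta=0$ and $\alpha=s$, weighting by $(1+|\xi|)^{2\beta}$ and integrating in $\xi$, the binding constraint comes from the nonlinear term which by Lemma~\ref{lm5.5} lies only in $H^{-r}(\R;\W_\star^{-s})$ for every $r>\bar r=3/4-s/2$, and which forces $\beta<1-\bar r=\tfrac{s}{2}+\tfrac14$. The source piece gives the same constraint via the Remark after~(H4) embedding $\f\in L^p(\L^q)$ into $H^{-r}(\H^{-s})$, while the boundary masses contribute the strictly weaker $\beta<1/2$. Hence $\tilde\u_\star\in H^\beta(\R;\W_\star^{-s})$ for every such $\beta$.

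The range of $(\alpha,\beta)$ claimed in \eqref{thm5.6-lab1} is then recovered by real interpolation of this bound with the energy bound $\tilde\u_\star\in L^2(\R;\W_\star^1)$: choosing the interpolation parameter $\mu=(1+\alpha)/(1+s)\in[0,1]$, one has $[\W_\star^1,\W_\star^{-s}]_\mu=\W_\star^{-\alpha}$ and the Hilbertian interpolation identity gives $\tilde\u_\star\in H^{\mu\beta}(\R;\W_\star^{-\alpha})$ for every $\beta<\tfrac{s}{2}+\tfrac14$; in other words $H^{\beta'}(\W_\star^{-\alpha})$ for every $\beta'<\mu(\tfrac{s}{2}+\tfrac14)=\bar\beta=\tfrac{1+\alpha}{1+s}\bigl(\tfrac{s}{2}+\tfrac14\bigr)$, matching the statement. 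The precise range $0\le\alpha\le s\le 1+2\alpha<2$ is exactly what makes $\mu\in[0,1]$ and what keeps all indices inside the $\V_\star^s$-$\W_\star^s$ equivalence window of Lemma~\ref{lm4.5} and Corollary~\ref{co4.2}. The companion bound on $\partial_t\u_\star$ in $H^{\beta-1}(\W_\star^{-\alpha})$ follows by multiplying the Fourier identity additionally by $2\pi i\xi$ and rerunning the computation.

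For \eqref{thm5.6-lab2}, I would rearrange the scheme into $\nu\Delta_\star\u_\star=-\partial_t\u_\star+\f_h-\mathcal{N}_\star(\u_\star,\u_\star)+\nabla p_h$ and control each right-hand-side term in $H^{-r}(0,T;\W_\star^{-s})$: $\partial_t\u_\star$ by \eqref{thm5.6-lab1} with $\alpha=s$ and $\beta=1-r$, the source by the $L^p\hookrightarrow H^{-r}$ embedding, the convective term by Lemma~\ref{lm5.5}, and the pressure via the fractional-order inf-sup condition proved just before this section. The principal obstacle is the exponent bookkeeping: the three ingredients of $\widehat G$ sit in different space-time scales, and the subtle point is recognizing that the Fourier-multiplier inversion alone delivers only the non-sharp range $\beta<\tfrac{s}{2}+\tfrac14$ at $\alpha=s$, so that a subsequent real-interpolation step against the energy bound is essential to recover the sharp $\bar\beta$ for all admissible $\alpha$.
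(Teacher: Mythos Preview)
Your overall Fourier-in-time strategy mirrors the paper's, but there are two genuine gaps.

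\textbf{First gap: the Dirac masses are not ``strictly weaker''.} With zero extension, the boundary contributions $\u_\star(0)\delta_0-\u_\star(T)\delta_T$ have Fourier transforms that are \emph{constant} in $\xi$. At $\alpha=s$ your mode-by-mode bound gives $(1+|\xi|)^{2\beta}\|\widehat{\tilde\u_\star}(\xi)\|_{\W_\star^{-s}}^2\lesssim (1+|\xi|)^{2\beta}|\xi|^{-2}\|\widehat G(\xi)\|_{\W_\star^{-s}}^2$, and for the Dirac part the right-hand side behaves like $|\xi|^{2\beta-2}$ at infinity, which is integrable only for $\beta<\tfrac12$. But the theorem claims $\beta<\tfrac{s}{2}+\tfrac14$, and for every $s>\tfrac12$ one has $\tfrac{s}{2}+\tfrac14>\tfrac12$; hence the Dirac constraint is \emph{more} restrictive than the nonlinear one, not less. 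After your interpolation with $L^2(\R;\W_\star^1)$ you would obtain only $\beta<\frac{1+\alpha}{2(1+s)}$, strictly smaller than $\bar\beta=\frac{1+\alpha}{1+s}(\tfrac{s}{2}+\tfrac14)$, and this weaker exponent does not support Corollary~\ref{co5.7} (which needs $\bar\beta$ close to $\tfrac35$). The paper avoids this by extending $\u_\star$ via a smooth cutoff $\varphi$ and a linear ramp $(t+1)\u_{0\star}$ on $[-1,0]$ (using that $\f$ is assumed in $L^2(0,T+1;\cdot)$), so that $\tilde{\boldsymbol{\mathfrak g}}$ contains no Dirac masses and the binding constraint truly comes from $\mathcal N_\star$.

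\textbf{Second gap: the route to \eqref{thm5.6-lab2} is circular.} Your rearrangement $\nu\Delta_\star\u_\star=-\partial_t\u_\star+\f_h-\mathcal N_\star(\u_\star,\u_\star)+\nabla p_h$ keeps the pressure on the right, and the fractional inf-sup only lets you bound $\|p_h\|_{H^{1-s}}$ by the duality pairing $(\nabla p_h,\w_\star)$, which from \eqref{Sec5-Lab1} again involves $(\Delta_\star\u_\star,\w_\star)$; you would be assuming what you want to prove (indeed, the paper's pressure estimate, Lemma~5.9, is proved \emph{after} Theorem~\ref{thm5.6} and uses it). The paper instead stays on the projected equation $\partial_t\u_\star+\nu A_\star\u_\star=P_\star\g$, tests the Fourier identity against $A_\star^{1-s}\mathcal F\tilde{\boldsymbol{\mathfrak u}}_\star$, and takes the \emph{real} part: the time-derivative term drops (it is purely imaginary) and one gets $\|A_\star\mathcal F\tilde{\boldsymbol{\mathfrak u}}_\star\|_{\V_\star^{-s}}\le C\|\mathcal F\tilde{\boldsymbol{\mathfrak g}}\|_{\W_\star^{-s}}$ directly, with no pressure and no appeal to \eqref{thm5.6-lab1}. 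The passage from $A_\star$ back to $\Delta_\star$ then uses Lemma~\ref{lm4.5}.
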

\begin{proof} Let  $\boldsymbol{\mathfrak{g}}$ be the extension of $\g$ by zero off $[0,T]$. Let us define 
$$
\boldsymbol{\mathfrak{u}}_\star=\left\{
\begin{array}{lcl}
\boldsymbol{0} &\mbox{ for } &t\in(-\infty, -1],
\\
 (t+1)\u_{0\star} &\mbox{ for }&t\in[-1,0],
 \\
 \u_\star&\mbox{ for }&t\in[0,T+1],
 \\
 \boldsymbol{0}&\mbox{ for }& t\in[T+1,\infty).
\end{array}
\right.
$$ 
Fix a cutoff function $\varphi\in \mathcal{D}(\Omega)$ which equals $1$ on $[0,T]$ and vanishes on $(-1, T+1)$. Define  
$\tilde {\boldsymbol{\mathfrak{u}}}_\star=\varphi \boldsymbol{\mathfrak{u}}_\star$, and 
$$
\tilde{\boldsymbol{\mathfrak{g}}}=\left\{
\begin{array}{ll}
(1+t)\varphi' \u_{0\star}+ \varphi\u_{0\star}+\nu(1+t)\varphi A_\star\u_{0\star}, &\mbox{ for }t\in(-1,0), 
\\
\varphi \boldsymbol{\mathfrak{g}}+\varphi' \boldsymbol{\mathfrak{u}}_\star, & \mbox{ otherwise}. 
\end{array} 
\right.
$$
Write
$$
\partial_t\tilde{\boldsymbol{\boldsymbol{\mathfrak{u}}}}_\star+\nu A_\star\tilde{\boldsymbol{\boldsymbol{\mathfrak{u}}}}_\star =P_\star\tilde{\boldsymbol{\mathfrak{g}}}\quad\mbox{ in }\quad {\boldsymbol{\mathcal{S}}}'(\R, \V_\star).
$$
Applying the Fourier transform, we get
\begin{equation}\label{thm5.6-lab3}
2  \pi i \xi {\mathcal F}\tilde{\boldsymbol{\mathfrak{u}}}_\star+\nu A_\star{\mathcal F}\tilde{\boldsymbol{\mathfrak{u}}}_\star=P_\star{\mathcal F}\tilde{\boldsymbol{\mathfrak{g}}}.
\end{equation}
Let $\alpha\in\R^+$. Multiply \eqref{thm5.6-lab3} by the complex conjugate of $A^{-\alpha}_\star {\mathcal F}\boldsymbol{\boldsymbol{\mathfrak{u}}}_\star$ and take the imaginary part to get
$$
\begin{array}{rcl}
2\pi |\xi|\|{\mathcal F}\tilde{\boldsymbol{\boldsymbol{\mathfrak{u}}}}_\star\|^2_{\V^{-\alpha}_\star}&\le& \|{\mathcal F} \tilde{\boldsymbol{\mathfrak{g}}}\|_{\W^{-s}_\star} \|A^{-\alpha}_\star {\mathcal F}\tilde{\boldsymbol{\boldsymbol{\mathfrak{u}}}}_\star\|_{\W^s_\star}
\\
&\le &\|{\mathcal F} \tilde{\boldsymbol{\mathfrak{g}}}\|_{\W^{-s}_\star} \|{\mathcal F}\tilde{\boldsymbol{\boldsymbol{\mathfrak{u}}}}_\star\|_{\V^{s- 2\alpha}_\star},
\end{array}
$$
where we have applied \eqref{W^s<V^s} for $s\in[0,2)$. For $\alpha\le s\le 1+2\alpha$, i.e. $-\alpha\le s-2\alpha\le 1$, we interpolate to get
$$
\|{\mathcal F}\tilde{\boldsymbol{\boldsymbol{\mathfrak{u}}}}_\star\|_{\V^{s-2\alpha}_\star}\le \| {\mathcal F}\tilde{\boldsymbol{\boldsymbol{\mathfrak{u}}}}_\star\|_{\V^{-\alpha}_\star}^\gamma\|{\mathcal F}\tilde{\boldsymbol{\boldsymbol{\mathfrak{u}}}}_\star\|_{\V_\star}^{1-\gamma},
$$
where $\gamma=\frac{2\alpha+1-s}{1+\alpha}$. Therefore,
$$
2\pi |\xi|\|{\mathcal F}\tilde{\boldsymbol{\boldsymbol{\mathfrak{u}}}}_\star\|^{2-\gamma}_{\V^{-\alpha}_\star}\le C\|{\mathcal F} \tilde{\boldsymbol{\mathfrak{g}}}\|_{\W^{-s}_\star} \|{\mathcal F}\boldsymbol{\boldsymbol{\mathfrak{u}}}_\star\|_{\V_\star}^{1-\gamma},
$$
and hence
$$
|\xi|^{\frac{2}{2-\gamma}-\mu} \|{\mathcal F}\tilde{\boldsymbol{\mathfrak{u}}}_\star\|_{\V^{-\alpha}_\star}^2\le C (1+|\xi|)^{-\mu}\|{\mathcal F} \tilde{\boldsymbol{\mathfrak{g}}}\|_{\W^{-s}_\star}^{\frac{2}{2-\gamma}}\|{\mathcal F}\tilde{\boldsymbol{\mathfrak{u}}}_\star\|_{\V_\star}^{\frac{2(1-\gamma)}{2-\gamma}},
$$
where $\mu=\frac{2r}{2-\gamma}$. Integrating over $\R$  and using Hölder's inequality and Plancherel's equality gives 
$$
\int_{\R} |\xi|^{\frac{2}{2-\gamma}-\mu} \|{\mathcal F}\tilde{\boldsymbol{\mathfrak{u}}}_\star\|^2_{\V^{-\alpha}_\star} d\xi\le C \|\tilde{\boldsymbol{\mathfrak{g}}}\|_{H^{-r}(\R;\W^{-s}_\star)}^{\frac{2}{2-\gamma}}  \|\tilde{\boldsymbol{\mathfrak{u}}}_\star\|_{L^2(\R;\V_\star)}^{\frac{2(1-\gamma)}{2-\gamma}},
$$
which implies that 
\begin{equation}\label{thm5.6-lab4}
\int_{\R}|\xi|^{2\beta} \|{\mathcal F}\tilde{\boldsymbol{\mathfrak{u}}}_\star\|^2_{\V^{-\alpha}_\star}\le C\|\tilde{\boldsymbol{\mathfrak{g}}}\|_{H^{-r}(0,T;\W^{-s}_\star)}^{\frac{2}{2-\gamma}}\|\tilde{\boldsymbol{\mathfrak{u}}}_\star\|_{L^2(0,T;\V_\star))}^{\frac{2(1-\gamma)}{2-\gamma}},
\end{equation}
for $\beta<\bar\beta$ with $\bar\beta:=\frac{1+\alpha}{1+s}(1-\bar r)$ coming from the definition of $\gamma$, $\mu$, $\bar r$, and $\alpha\le s \le 1+2\alpha$. Next observe that we have, from  \eqref{rm3.1-lab1} and \eqref{lm5.5-lab1} for $s\in[0,\frac{3}{2})$, 
\begin{equation}\label{thm5.6-lab5}
\|\tilde{\boldsymbol{\mathfrak{g}}}\|_{H^{-r}(\R;\W^{-s}_\star)}\le C. 
\end{equation}
Inserting \eqref{energy-estimate} and \eqref{thm5.6-lab5} into \eqref{thm5.6-lab4}, we arrive at
$$
\int_{\R}|\xi|^{2\beta} \|{\mathcal F}\tilde{\boldsymbol{\mathfrak{u}}}_\star\|^2_{\V^{-\alpha}_\star}\le C.
$$
For $\beta\ge0$, we write
$$
\begin{array}{rcl} 
\displaystyle
\int_{\R}(1+|\xi|)^{2\beta} \|{\mathcal F}\tilde{\boldsymbol{\mathfrak{u}}}_\star\|^2_{\V^{-\alpha}_\star} d\xi&=&\displaystyle\int_{|\xi|\le1} (1+|\xi|)^{2\beta} \|{\mathcal F}\tilde{\boldsymbol{\mathfrak{u}}}_\star\|^2_{\V^{-\alpha}_\star} d\xi + \int_{|\xi|>1}(1+|\xi|)^{2\beta} \|{\mathcal F}\boldsymbol{\mathfrak{u}}_\star\|^2_{\V^{-\alpha}_\star} d\xi
\\
&\le&\displaystyle C \int_{|\xi|\le1}  \|{\mathcal F}\tilde{\boldsymbol{\mathfrak{u}}}_\star\|^2_{\V^{-\alpha}_\star} d\xi + C  \int_{|\xi|>1} |\xi|^{2\beta}\|{\mathcal F}\tilde{\boldsymbol{\mathfrak{u}}}_\star\|^2_{\V^{-\alpha}_\star} d\xi.
\\
&\le&\displaystyle C  \int_\R\| \tilde{\boldsymbol{\mathfrak{u}}}_\star\|^2_{\V_\star} d\xi+ C \int_\R |\xi|^{2\beta} \|{\mathcal F}\tilde{\boldsymbol{\mathfrak{u}}}_\star\|^2_{\V^{-\alpha}_\star} d\xi,
\end{array}
$$
where Plancherel's equality and the continuous embedding between  $\V_\star$ and $\V_\star^{-\alpha}$ were used in the last line. The above estimate also holds trivially for $\beta < 0$. Thus we get
$$
\|\partial_t \tilde{\boldsymbol{\mathfrak{u}}}_\star \|_{H^{\beta-1}(\R; \V_\star^{-\alpha})}+\|{\tilde{\boldsymbol{\mathfrak{u}}}}_\star\|_{H^{\beta}(\R; \V_\star^{-\alpha})}\le C.
$$    
As a result of \eqref{W^s<V^s} for $s\in[0,\frac{1}{2})$,  we obtain    
$$
\|\partial_t \tilde{\boldsymbol{\mathfrak{u}}}_\star \|_{H^{\beta-1}(0,T; \W_\star^{-\alpha})}+\|\tilde{\boldsymbol{\mathfrak{u}}}_\star\|_{H^{\beta}(0,T; \W_\star^{-\alpha})}\le C,
$$
for all  $\alpha $ satisfying $0\le\alpha\le s\le 1+2\alpha<2$, and for all $\beta$ satisfying $\beta<\bar\beta:=\frac{1+\alpha}{1+s}(\frac{s}{2}+\frac{1}{4})$. This latter inequality leads to \eqref{thm5.6-lab1}. 

Next, multiply \eqref{thm5.6-lab3} by $A_\star^{1-s}\mathcal{F}\tilde{\boldsymbol{\mathfrak{u}}}_\star$ and take the real part to get
$$
\nu\|A_\star\mathcal{F}\tilde{\boldsymbol{\mathfrak{u}}}_\star\|^2_{\V_\star^{-s}}\le C\|{\mathcal F}\tilde{\boldsymbol{\mathfrak{g}}}\|_{\W^{-s}_\star}\|A_\star^{1-s}\mathcal{F}\tilde{\boldsymbol{\mathfrak{u}}}_\star\|_{\V_\star^{s}}\le C\|{\mathcal F}\tilde{\boldsymbol{\mathfrak{g}}}\|_{\W^{-s}_\star}\|A_\star\mathcal{F}\tilde{\boldsymbol{\mathfrak{u}}}_\star\|_{\V_\star^{s}},
$$
where we have applied \eqref{W^s<V^s} for $s\in[0,2)$.
Thus, 
$$
(1+|\xi|)^{-2r}\|A_\star\mathcal{F}\tilde{\boldsymbol{\mathfrak{u}}}_\star\|_{\V_\star^{-s}}^2\le C (1+|\xi|)^{-2r} \|{\mathcal F} \tilde{\boldsymbol{\mathfrak{g}}}\|_{\W^{-s}_\star}^2, 
$$
and hence
$$
\|A_\star\tilde{\boldsymbol{\mathfrak{u}}}_\star\|_{H^{-r}(\R; \V_\star^{-s})}\le C \|\tilde{\boldsymbol{\mathfrak{g}}}\|_{H^{-r}(\R; \W_\star^{-s})}.
$$
It is not hard to see from \eqref{W^s<V^s}  that $\|\Delta_\star\v_\star\|_{\W^{-s}_\star}\le C \|A_\star\v_\star\|$ for all $\v_\star\in \V_\star$ and all $s\in[0,\frac{3}{2})$. Then, using \eqref{thm5.6-lab5} yields   
$$
\|\Delta_\star\tilde{\boldsymbol{\mathfrak{u}}}_\star\|_{H^{-r}(0,T; \W^{-s}_\star)}\le C,
$$
for all $r>\bar r$, which implies \eqref{thm5.6-lab2}. 
\end{proof}
\begin{corollary}\label{co5.7} Suppose that assumptions $(\rm H1)$-$(\rm H4)$ hold. For $\alpha\in[\frac{1}{4}, \frac{1}{2})$ and $\beta<\bar\beta=\frac{2}{5}(1+\alpha)$,  it follows that
\begin{equation}\label{co5.7-lab1}
\|\partial_t\u_\star\|_{H^{\beta-1}(0,T; \W^{-\alpha}_\star)}+\|\u_\star\|_{H^{\beta}(0,T; \W^{-\alpha}_\star)}\le C,
\end{equation}
where $C>0$ is a constante independent of $h$.
\end{corollary}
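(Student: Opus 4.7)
The plan is to obtain the corollary as a specialization of Theorem \ref{thm5.6} by choosing the parameter $s$ there optimally for each fixed $\alpha$ in the stated range. The only technical point is that the admissible range of $s$ in Theorem \ref{thm5.6} is $s \in [\max\{\alpha,\tfrac{1}{2}\},\,\min\{1+2\alpha,\tfrac{3}{2}\})$, and one must check that the supremum of the allowed exponents $\frac{1+\alpha}{1+s}\bigl(\tfrac{s}{2}+\tfrac{1}{4}\bigr)$ over this range equals $\tfrac{2}{5}(1+\alpha)$.

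First, I would note that the function
$$
\Phi(s)=\frac{1+\alpha}{1+s}\Bigl(\frac{s}{2}+\frac{1}{4}\Bigr)
$$
is strictly increasing in $s$ on $(0,\infty)$: a direct calculation gives $\Phi'(s)=(1+\alpha)/[4(1+s)^2]>0$. Hence, for $\alpha$ fixed, the supremum of $\Phi(s)$ over the admissible $s$ is attained (in the limit) at the upper endpoint of the admissible interval.

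Next, for $\alpha \in [\tfrac{1}{4},\tfrac{1}{2})$ one has $1+2\alpha \ge \tfrac{3}{2}$, so the upper constraint $s \le 1+2\alpha$ is not binding and the active upper bound is $s<\tfrac{3}{2}$. Similarly, $\alpha<\tfrac{1}{2}$ is compatible with the lower bound $s\ge\tfrac{1}{2}$ coming from $(\mathrm{C})$ since $\alpha\le\tfrac{1}{2}\le s$. A direct evaluation yields
$$
\lim_{s\to 3/2^{-}}\Phi(s)=\frac{1+\alpha}{5/2}\cdot 1=\frac{2(1+\alpha)}{5}=\bar\beta.
$$

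Finally, given any $\beta<\tfrac{2}{5}(1+\alpha)$, by continuity I can fix $s\in[\tfrac{1}{2},\tfrac{3}{2})$ (with also $s\ge \alpha$ and $s\le 1+2\alpha$, which are automatic in this regime) close enough to $\tfrac{3}{2}$ that $\beta<\Phi(s)$. Applying Theorem \ref{thm5.6} with this $s$ and the chosen $\alpha$ and $\beta$ immediately yields \eqref{co5.7-lab1}, with a constant that depends on $s,\alpha,\beta$ but is independent of $h$. No step here is a genuine obstacle; the only care needed is verifying that all four inequalities $\alpha\le s$, $s\le 1+2\alpha$, $s<\tfrac{3}{2}$, and $\beta<\Phi(s)$ are simultaneously achievable for every $\alpha$ in the advertised window $[\tfrac{1}{4},\tfrac{1}{2})$, which is exactly where the lower bound $\alpha\ge\tfrac{1}{4}$ comes from in the corollary's statement.
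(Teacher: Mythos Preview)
Your proof is correct and follows essentially the same approach as the paper: both observe that the factor $\frac{1}{1+s}(\tfrac{s}{2}+\tfrac14)$ is maximized as $s\to\tfrac32$, yielding $\bar\beta=\tfrac25(1+\alpha)$, and that the constraint $s\le 1+2\alpha$ forces $\alpha\ge\tfrac14$ for this choice to be admissible. Your version is in fact more carefully written than the paper's, since you explicitly verify monotonicity of $\Phi$ and treat the supremum at the open endpoint $s=\tfrac32$ via a limiting argument rather than as an attained maximum.
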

\begin{proof} 
From $s\in[\frac{1}{2},\frac{3}{2})$ and $0\le\alpha\le s\le 1+2\alpha<2$, we obtain $\alpha\in[\frac{1}{4}, \frac{1}{2})$. Next note that $\frac{1}{1+s} (\frac{s}{2}+\frac{1}{4})$ reaches its maximum $\frac{2}{5}$ at $s=\frac{3}{2}$.  Therefore we can simplify the expression $\bar\beta$ in term of $\alpha$ only as $\bar\beta=\frac{2}{5}(1+\alpha)$ in \eqref{thm5.6-lab1}. 
\end{proof}

Using \eqref{co4.2-lab1}, one can also prove the following. 

\begin{corollary} Assume that assumptions $(\rm H1)$-$(\rm H4)$ hold. Then, for $\alpha\in[\frac{1}{4},\frac{1}{2})$ and $\beta<\bar\beta=\frac{2}{5}(1+\alpha)$, it follows that there exists a constant $C>0$, independent of $h$, such that 
\begin{equation}\label{co5.8-lab1}
\|\partial_t\u_h\|_{H^{\beta-1}(0,T; \tilde\H^{-\alpha}_0(\Omega))}+\|\u_h\|_{H^{\beta}(0,T; \tilde\H^{-\alpha}_0(\Omega))}\le C.
\end{equation}
Furthermore, for $s\in[\frac{1}{2},\frac{3}{2}]$ and $r$ such that $r>\bar r=\frac{3}{4}-\frac{s}{2}=\frac{1}{p}-\frac{1}{2}$, it follows that
\begin{equation}\label{co5.8-lab2}
\|\Delta_h\u_h\|_{H^{-r}(0,T; \tilde\H^{-s}_0(\Omega))}\le C.
\end{equation}
\end{corollary}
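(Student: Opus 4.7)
The plan is to derive the bounds on the discrete finite-element components $\u_h$ and $\Delta_h\u_h$ directly from the already-established bounds on the composite quantities $\u_\star$, $\partial_t\u_\star$, and $\Delta_\star\u_\star$ via the norm-equivalence of Corollary \ref{co4.2}. Since $\tilde\W_h$ is constructed from $\pi_{\W_h}^\perp$-images, it is $\L^2(\Omega)$-orthogonal to $\W_h$, so the decomposition $\u_\star=\u_h+\tilde\u_h$ coincides with the projection splitting $\u_h=\pi_{\W_h}\u_\star$, $\tilde\u_h=\pi_{\W_h}^\perp\u_\star$ pointwise in time, and \eqref{co4.2-lab1} applies with $\w_h=\u_h(t)$, $\tilde\w_h=\tilde\u_h(t)$.

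For estimate \eqref{co5.8-lab1}, note that $\alpha\in[\tfrac14,\tfrac12)$ implies $-\alpha\in(-\tfrac32,\tfrac32)$, so \eqref{co4.2-lab1} yields the pointwise-in-time inequality
\begin{equation*}
\|\u_h(t)\|_{\tilde\H^{-\alpha}_0(\Omega)}\le c^{-1}\|\u_\star(t)\|_{\W^{-\alpha}_\star},
\end{equation*}
and the analogous bound for $\partial_t\u_h$, using that $\partial_t\u_h\in\W_h$ and $\partial_t\tilde\u_h\in\tilde\W_h$ at each time. After extending the functions off $[0,T]$ as in the proof of Theorem \ref{thm5.6} and squaring-and-integrating against the Fourier weights $(1+|\xi|)^{2\beta}$ and $(1+|\xi|)^{2(\beta-1)}$, these pointwise inequalities are preserved (both sides are Hilbert norms), and Corollary \ref{co5.7} closes the estimate.

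For estimate \eqref{co5.8-lab2}, exploit the identity already recorded after the definition of $-\Delta_\star$, namely
\begin{equation*}
-\Delta_\star\u_\star=-\Delta_h\u_h-h^{-2}\tilde\u_h,
\end{equation*}
where $-\Delta_h\u_h\in\W_h$ and $-h^{-2}\tilde\u_h\in\tilde\W_h$. Applying \eqref{co4.2-lab1} with exponent $-s$ (legitimate for $s\in[\tfrac12,\tfrac32)$) gives
\begin{equation*}
c\bigl(\|\Delta_h\u_h\|_{\tilde\H^{-s}_0(\Omega)}+h^{s-2}\|\tilde\u_h\|\bigr)\le\|\Delta_\star\u_\star\|_{\W^{-s}_\star}
\end{equation*}
pointwise in time, and integrating against $(1+|\xi|)^{-2r}$ in the Fourier variable combined with \eqref{thm5.6-lab2} delivers the required uniform bound. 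The endpoint $s=\tfrac32$ of the statement, if actually needed, is retrieved by a standard limit argument letting $s\uparrow\tfrac32$ (the range of admissible $r$ degenerates only at that endpoint).

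The only conceptual point to verify is that the pointwise-in-time norm inequalities provided by Corollary \ref{co4.2} really do transfer to the fractional time norms $H^\beta$, $H^{\beta-1}$, and $H^{-r}$ employed here. This is immediate and not a genuine obstacle: those norms are built from $L^2$ Plancherel weights on Hilbert-valued tempered distributions, so squaring and integrating any pointwise Hilbert-norm bound yields the corresponding integral-norm bound without further subtlety.
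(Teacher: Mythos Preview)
Your proposal is correct and follows precisely the approach the paper itself indicates: the paper's entire proof is the single sentence ``Using \eqref{co4.2-lab1}, one can also prove the following,'' and you have supplied exactly the details implicit in that remark --- the projection splitting $\u_\star=\u_h+\tilde\u_h$, the identity $-\Delta_\star\u_\star=-\Delta_h\u_h-h^{-2}\tilde\u_h$, and the transfer of the pointwise norm comparison of Corollary~\ref{co4.2} through the Fourier-weighted time norms (which is legitimate because the spatial projections $\pi_{\W_h}$, $\pi_{\W_h}^\perp$ commute with the temporal Fourier transform). Your caveat about the endpoint $s=\tfrac32$ is apt; the closed bracket in the statement is almost certainly a typo, since Theorem~\ref{thm5.6} and Lemma~\ref{lm5.5} only supply the bound for $s\in[\tfrac12,\tfrac32)$.
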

We now proceed to obtain an estimate for $p_h$.

\begin{lemma} Suppose that conditions $(\rm H1)$-$(\rm H4)$ hold. There exists a constant $C>0$, independent of $h$, such that, for $s\in[\frac{1}{2}, \frac{7}{10}]$ and $r>\bar r=\frac{3}{4}-\frac{s}{2}$,
\begin{equation}\label{lm5.9-lab1}
\|p_h\|_{H^{-r}(0,T; H^{1-s}(\Omega))}\le C,
\end{equation}
where $C>0$ is a constant independent of $h$.
\end{lemma}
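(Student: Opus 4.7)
The plan is to isolate $\nabla p_h$ from the discrete momentum equation \eqref{Sec5-Lab1}, apply the previous inf-sup lemma (with exponent $1-s$, which lies in $[0,1]$ for $s\in[\tfrac12,\tfrac{7}{10}]$) to convert a dual-norm control on $\nabla p_h$ into an $H^{1-s}(\Omega)$-bound on $p_h$, and then control the four resulting terms in $H^{-r}(0,T;\W_\star^{-s})$ using the a priori estimates already derived. Explicitly, rearranging \eqref{Sec5-Lab1} gives
$$
(\nabla p_h,\w_\star) = (\partial_t\u_\star,\w_\star) + b_\star(\u_\star,\u_\star,\w_\star) - \nu(\Delta_\star\u_\star,\w_\star) - (\f_h,\pi_{\W_h}\w_\star),
$$
and the inf-sup condition then yields
$$
\|p_h\|_{H^{1-s}(\Omega)} \lesssim \sup_{\w_\star\neq\boldsymbol{0}}\frac{|(\nabla p_h,\w_\star)|}{\|\w_\star\|_{\W_\star^s}}.
$$

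For the three ``easy'' terms I would use: \eqref{thm5.6-lab2} to place $\Delta_\star\u_\star$ in $H^{-r}(0,T;\W_\star^{-s})$ for $r>\bar r$; Lemma~\ref{lm5.5} to place $\mathcal{N}_\star(\u_\star,\u_\star)$ in the same space; and the remark following (H4) (giving $\f\in H^{-r}(0,T;\tilde\H_0^{-s}(\Omega))$) combined with the stability of $\pi_{\W_h}$ on $\tilde\H_0^s(\Omega)$ from Corollary~\ref{co4.2} to control $(\f_h,\pi_{\W_h}\w_\star)$ by $C\|\f\|_{\tilde\H_0^{-s}(\Omega)}\|\w_\star\|_{\W_\star^s}$ in the required time-norm.

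The delicate contribution is $(\partial_t\u_\star,\w_\star)$. Theorem~\ref{thm5.6} provides $\partial_t\u_\star\in H^{\beta-1}(0,T;\W_\star^{-\alpha})$ for any $\alpha\in[0,\tfrac12)$, $s_T\in[\tfrac12,\tfrac32)$ with $\alpha\le s_T\le 1+2\alpha$, and $\beta<\bar\beta_T=\frac{(1+\alpha)(2s_T+1)}{4(1+s_T)}$. Since $\alpha<\tfrac12\le s$, one has the embedding $\W_\star^{-\alpha}\hookrightarrow\W_\star^{-s}$, so $(\partial_t\u_\star,\w_\star)$ is controlled by $\|\partial_t\u_\star\|_{\W_\star^{-s}}\|\w_\star\|_{\W_\star^s}$ in $H^{\beta-1}(0,T)$. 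The embedding $H^{\beta-1}\hookrightarrow H^{-r}$ requires $r\ge 1-\beta$; optimizing by letting $s_T\to\tfrac32^-$ and $\alpha\to\tfrac12^-$ makes $\bar\beta_T\to\tfrac35$, so $1-\bar\beta_T\to\tfrac25$. This is compatible with the bare condition $r>\bar r=\tfrac34-\tfrac{s}{2}$ exactly when $\bar r\ge\tfrac25$, i.e.\ when $s\le\tfrac{7}{10}$, which pins down the upper endpoint in the statement.

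Summing the four bounds then yields $\|\nabla p_h\|_{H^{-r}(0,T;(\W_\star^s)')}\le C$, and applying the inf-sup condition delivers \eqref{lm5.9-lab1}. The main obstacle is precisely the parameter balance in the $\partial_t\u_\star$ term: because Theorem~\ref{thm5.6} only controls $\partial_t\u_\star$ in $\W_\star^{-\alpha}$ with $\alpha$ strictly less than $\tfrac12$, one has to push both $s_T$ and $\alpha$ to their boundary values in order to make the time-regularity threshold $1-\bar\beta_T$ meet the pressure threshold $\bar r$ coming from the other three contributions, and this matching is what forces the restriction $s\le\tfrac{7}{10}$.
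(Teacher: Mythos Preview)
Your proposal is correct and follows essentially the same route as the paper: isolate $\nabla p_h$ from \eqref{Sec5-Lab1}, apply the fractional inf-sup lemma, and bound the four contributions in $H^{-r}(0,T;\W_\star^{-s})$ via \eqref{thm5.6-lab2}, Lemma~\ref{lm5.5}, \eqref{rm3.1-lab2}, and the time-derivative estimate. The only cosmetic difference is that the paper packages the optimization of the $\partial_t\u_\star$ term through Corollary~\ref{co5.7} (which already records $\bar\beta=\tfrac{2}{5}(1+\alpha)$ after sending $s_T\to\tfrac{3}{2}$), whereas you redo that optimization directly from Theorem~\ref{thm5.6}; both arrive at $1-\bar\beta\to\tfrac{2}{5}$ and hence the constraint $s\le\tfrac{7}{10}$.
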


\begin{proof} First we write \eqref{co5.7-lab1} as 
$$
\|\partial_t	\u_\star\|_{H^{-r}(0,T; \W^{-\alpha}_\star)}\le C, 
$$
where $\alpha\in[\frac{1}{4},\frac{1}{2})$ and $r>\tilde r:=1-\bar\beta=\frac{3}{5}-\frac{2}{5}\alpha$. As a result, we have that
\begin{equation}\label{lm5.9-lab2}
\|\partial_t	\u_\star\|_{H^{-r }(0,T; \W^{-s}_\star)}\le C 
\end{equation}
holds for $\alpha\le s$ and $\tilde r\le \bar r$ provided that $s\in[\frac{1}{2},\frac{7}{10}]$ and $r>\bar r=\frac{3}{4}-\frac{s}{2}$.    

From \eqref{Sec5-Lab1}, we bound
$$
\begin{array}{rcl}
\|p_h\|_{H^{1-s}(\Omega)}&\le& \displaystyle \sup _{\v_\star\in \W_\star\backslash\{ 0\}} \frac{(\nabla p_h, \v_\star)}{\|\v_\star\|_{\W^s_\star}}
\\
&\le&C(\|\partial_t \u_\star\|_{\W^{-s}_\star }+\|\Delta_\star\u_\star\|_{\W^{-s}_\star}
+\|{\mathcal N}_\star(\u_\star,\u_\star)\|_{\W^{-s}_\star}+\|\f_h\|_{\W^{-s}_\star}
)
\\
&\le&C(\|\partial_t \u_\star\|_{\W^{-\alpha}_\star }+\|\Delta_\star\u_\star\|_{\W^{-s}_\star}
+\|{\mathcal N}_\star(\u_\star,\u_\star)\|_{\W^{-s}_\star}+\|\f_h\|_{\tilde\H^{-s}_0(\Omega)}).
\end{array}
$$

The proof is completed via \eqref{lm5.9-lab2}, \eqref{thm5.6-lab2}, \eqref{lm5.5-lab1} and \eqref{rm3.1-lab2}.

\end{proof}

\section{Convergence towards weak and suitable weak solutions}\label{weaksol}
In this section we will prove that the sequence of the approximate solutions provided by scheme \eqref{eq:GalerkinNS} converges towards a weak solution in the sense of Definition \ref{def:weak-solution3} and towards a suitable weak solution in the sense of Definition \ref{Suitable_solution}. In order for these convergence results to hold, we will need to use the following compactness results {\it à la Aubin-Lions}.

The following compactness result is due to Lions \cite{Lions_1969}.
\begin{lemma}\label{Lm6.1} Let $H_0\hookrightarrow H \hookrightarrow H_1$ be three Hilbert spaces with dense and continuous embedding. Assume that the embedding $H_0\hookrightarrow H$ is compact. Then $L^2(0,T; H_0)\cap H^\gamma (0,T; H_1) $ embeds compactly in $L^2(0,T; H)$ for $\gamma>0$. 
\end{lemma}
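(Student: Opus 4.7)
This is Lions' classical compactness lemma, so my plan is to reproduce the standard Fourier--truncation proof adapted to the fractional-regularity scale. Let $\{u_n\}$ be bounded in $L^2(0,T;H_0)\cap H^\gamma(0,T;H_1)$; the aim is to extract a subsequence converging strongly in $L^2(0,T;H)$.

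The first step is an Ehrling reduction: the compactness of $H_0\hookrightarrow H$ combined with continuity of $H\hookrightarrow H_1$ yields, for every $\epsilon>0$, a constant $C_\epsilon>0$ with $\|v\|_H\le\epsilon\|v\|_{H_0}+C_\epsilon\|v\|_{H_1}$ for all $v\in H_0$. Applied to $u_n(t)-u_m(t)$ and integrated in $t$, this gives
$$
\|u_n-u_m\|_{L^2(0,T;H)}^2\le 2\epsilon^2\|u_n-u_m\|_{L^2(0,T;H_0)}^2+2C_\epsilon^2\|u_n-u_m\|_{L^2(0,T;H_1)}^2.
$$
The first term is uniformly small by the $L^2(0,T;H_0)$ bound, so a diagonal argument on $\epsilon_k=1/k$ reduces the claim to extracting a subsequence Cauchy in $L^2(0,T;H_1)$.

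For this I combine a Fourier cut-off with Arzelà--Ascoli. Pick extensions $\tilde u_n\in\mathcal{S}'(\R;H_1)$ realizing the $H^\gamma(0,T;H_1)$-norm up to a factor two, and multiply by a smooth bump supported in $(-1,T+1)$ and equal to one on $[0,T]$; this preserves the $H^\gamma(\R;H_1)$-bound and produces an element of $L^2(\R;H_0)$ with norm controlled by $\|u_n\|_{L^2(0,T;H_0)}$. For $N>0$ set
$$
u_n^N(t):=\int_{|\xi|\le N} e^{2\pi it\xi}\,\mathcal{F}\tilde u_n(\xi)\,d\xi.
$$
Plancherel and the $H^\gamma$ bound yield the uniform tail estimate $\|\tilde u_n-u_n^N\|_{L^2(\R;H_1)}^2\le(1+N)^{-2\gamma}\|\tilde u_n\|_{H^\gamma(\R;H_1)}^2$, which tends to $0$ as $N\to\infty$ uniformly in $n$. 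On the other hand, for each fixed $N$, Cauchy--Schwarz inside the defining integral gives $\|u_n^N(t)\|_{H_0}\le(2N)^{1/2}\|\tilde u_n\|_{L^2(\R;H_0)}$ together with the Lipschitz bound $\|u_n^N(t)-u_n^N(s)\|_{H_0}\le 2\pi N(2N)^{1/2}|t-s|\,\|\tilde u_n\|_{L^2(\R;H_0)}$, so $\{u_n^N\}_n$ is pointwise bounded in $H_0$ and equicontinuous as a family of maps $[0,T]\to H_0$. The compactness of $H_0\hookrightarrow H$ then makes Arzelà--Ascoli applicable in $C([0,T];H)$, giving a subsequence convergent there, hence also in $L^2(0,T;H_1)$.

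A Cantor diagonal extraction over $N\in\N$, combined with the tail bound, produces a single subsequence Cauchy in $L^2(0,T;H_1)$; by the Ehrling reduction this is also Cauchy in $L^2(0,T;H)$, establishing compactness. The only genuinely delicate point is the joint construction of the extensions $\tilde u_n$ with simultaneous control in both the $L^2(\R;H_0)$- and $H^\gamma(\R;H_1)$-norms, which is resolved by the cutoff-and-multiply trick sketched above; everything else is standard bookkeeping.
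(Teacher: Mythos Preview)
The paper does not prove Lemma~\ref{Lm6.1}: it is stated with attribution to Lions~\cite{Lions_1969} and used as a black box. Your argument is precisely the classical proof from that reference---Ehrling reduction, Fourier low/high frequency splitting, and Arzel\`a--Ascoli on the band-limited part---so in spirit it matches exactly what the citation supplies.

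One technical point deserves tightening. Your ``cutoff-and-multiply'' construction of $\tilde u_n$ does not actually deliver $L^2(\R;H_0)$-control: a near-optimal extension in $H^\gamma(\R;H_1)$ need not take values in $H_0$ outside $(0,T)$, and multiplying by a bump that is nonzero on $(-1,0)\cup(T,T+1)$ does not cure this. With the paper's convention, however, the $H^\gamma(0,T;H_1)$-norm is defined as the $H^\gamma(\R;H_1)$-norm of the \emph{zero} extension, so no construction is required at all: the zero extension lies simultaneously in $L^2(\R;H_0)$ (with norm equal to $\|u_n\|_{L^2(0,T;H_0)}$) and in $H^\gamma(\R;H_1)$ by hypothesis, and the rest of your argument goes through verbatim.
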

The proof of the two following compactness result can be found in \cite[Ap. A.1, A.2]{Guermond_2007}.
\begin{lemma}\label{Lm6.2} Let $X\hookrightarrow Y$ be two Hilbert spaces with compact embedding. Then $H^\beta(0,T; X)$ embeds continuously and compactly in $C^0([0,T]; Y)$ for $\beta>\frac{1}{2}$.   
\end{lemma}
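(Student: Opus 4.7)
The plan is to prove this by combining a Sobolev-type embedding in time with a vectorial Arzel\`a--Ascoli argument, exploiting the compactness of $X\hookrightarrow Y$ to gain compactness.

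First I would establish the continuous embedding $H^\beta(0,T;X)\hookrightarrow C^0([0,T];X)$ for $\beta>\frac{1}{2}$. By the definition of the $H^\beta(0,T;X)$-norm as the infimum over extensions to $\mathcal{S}'(\R;X)$, it suffices to work with $v\in H^\beta(\R;X)$. Writing $v(t)=\int_\R e^{2\pi i t\xi}\mathcal{F}v(\xi)\,d\xi$ and applying Cauchy--Schwarz in $\xi$ with the weight $(1+|\xi|)^{2\beta}$ gives
$$
\|v(t)\|_X\le \left(\int_\R(1+|\xi|)^{-2\beta}\,d\xi\right)^{\!1/2}\left(\int_\R(1+|\xi|)^{2\beta}\|\mathcal{F}v(\xi)\|_X^2\,d\xi\right)^{\!1/2}\le C_\beta\|v\|_{H^\beta(\R;X)},
$$
since the first integral is finite precisely when $\beta>\frac{1}{2}$.

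Next I would upgrade this to a H\"older-in-time estimate. Starting from $v(t)-v(s)=\int_\R(e^{2\pi i t\xi}-e^{2\pi i s\xi})\mathcal{F}v(\xi)\,d\xi$, I split the integral at $|\xi|\simeq |t-s|^{-1}$, bounding $|e^{2\pi i t\xi}-e^{2\pi i s\xi}|$ by $2\pi|t-s||\xi|$ for small $|\xi|$ and by $2$ for large $|\xi|$. A Cauchy--Schwarz argument as above then yields
$$
\|v(t)-v(s)\|_X\le C_\beta\,|t-s|^{\beta-1/2}\,\|v\|_{H^\beta(\R;X)}
$$
for every $t,s\in\R$, so functions in $H^\beta(0,T;X)$ are uniformly $(\beta-\tfrac{1}{2})$-H\"older continuous into $X$.

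Finally, let $\{v_n\}\subset H^\beta(0,T;X)$ be a bounded sequence. By the two previous steps, $\{v_n\}$ is uniformly bounded in $C^0([0,T];X)$ and uniformly H\"older continuous into $X$; in particular it is equicontinuous into $Y$. For each $t\in[0,T]$ the set $\{v_n(t)\}$ is bounded in $X$, hence precompact in $Y$ since $X\hookrightarrow Y$ is compact. The vector-valued Arzel\`a--Ascoli theorem then provides a subsequence converging in $C^0([0,T];Y)$, which proves the claimed compact embedding. The only nontrivial step is the H\"older estimate, which reduces to the standard dyadic split of the Fourier integral described above; everything else is a direct Arzel\`a--Ascoli packaging.
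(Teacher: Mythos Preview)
Your proof is correct. The paper itself does not actually give a proof of this lemma; it simply cites \cite{Guermond_2007}, Appendix~A.1--A.2, so there is no in-paper argument to compare against. Your route---the Fourier-side Sobolev embedding $H^\beta(\R;X)\hookrightarrow C^{0,\beta-1/2}(\R;X)$ followed by the vector-valued Arzel\`a--Ascoli theorem exploiting the compact inclusion $X\hookrightarrow Y$---is the standard way to establish this result, and nothing in it is problematic.

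One minor remark: the H\"older exponent $\beta-\tfrac12$ in your second step is literally correct only for $\beta\in(\tfrac12,\tfrac32)$; for $\beta\ge\tfrac32$ the low-frequency integral in your dyadic split converges and the exponent saturates (you get Lipschitz or better). This does not affect the argument, since all you need is equicontinuity into $Y$, and any positive H\"older exponent suffices. If you want the statement to read cleanly for all $\beta>\tfrac12$, you can simply note that $H^\beta\hookrightarrow H^{\beta'}$ for any $\tfrac12<\beta'<\min(\beta,\tfrac32)$ and run your estimate with $\beta'$.
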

\begin{lemma}\label{Lm6.3} Let $H_0\hookrightarrow H_1$ be two Hilbert spaces with compact embedding. Let $\gamma>0$ and $\gamma>\mu$, then the injection $H^\gamma(0,T; H_0)\hookrightarrow H^\mu (0,T H_1)$ is compact. 
\end{lemma}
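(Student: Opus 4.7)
The plan is to bootstrap from Lemma~\ref{Lm6.1} together with a Fourier-side interpolation inequality, splitting into the two regimes $\mu\le 0$ and $0<\mu<\gamma$. First I would establish the core compact embedding $H^\gamma(0,T;H_0)\hookrightarrow L^2(0,T;H_1)$; then I would deduce the case $\mu\le 0$ by a continuous embedding and the case $0<\mu<\gamma$ by interpolation.

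For the core step, let $\{f_n\}\subset H^\gamma(0,T;H_0)$ be bounded. The hypothesis $\gamma>0$ gives a continuous embedding into $L^2(0,T;H_0)$, while continuity of $H_0\hookrightarrow H_1$ yields boundedness in $H^\gamma(0,T;H_1)$; hence $\{f_n\}$ is bounded in $L^2(0,T;H_0)\cap H^\gamma(0,T;H_1)$. Applying Lemma~\ref{Lm6.1} to the triple $H_0\hookrightarrow\overline{H_0}^{H_1}\hookrightarrow\overline{H_0}^{H_1}$ (replacing $H_1$ by the closure of $H_0$ in $H_1$ to recover density if needed) extracts a subsequence convergent in $L^2(0,T;H_1)=H^0(0,T;H_1)$. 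If $\mu\le 0$, the continuous inclusion $H^0(0,T;H_1)\hookrightarrow H^\mu(0,T;H_1)$, which is immediate from the infimum definition of the target norm together with $(1+|\xi|)^{2\mu}\le 1$, completes the argument.

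For $0<\mu<\gamma$, I would invoke the interpolation inequality
$$\|f\|_{H^\mu(0,T;H_1)}\le C\,\|f\|_{L^2(0,T;H_1)}^{1-\theta}\,\|f\|_{H^\gamma(0,T;H_1)}^{\theta},\qquad \theta=\mu/\gamma\in(0,1).$$
On the full line, this follows from Hölder in $\xi$ applied to the identity $(1+|\xi|)^{2\mu}=((1+|\xi|)^{2\gamma})^{\theta}\cdot 1^{1-\theta}$, which yields $\|g\|_{H^\mu(\R;H_1)}\le \|g\|_{L^2(\R;H_1)}^{1-\theta}\,\|g\|_{H^\gamma(\R;H_1)}^{\theta}$ for every $g\in H^\gamma(\R;H_1)$; the passage to $(0,T)$ then uses a bounded linear extension operator. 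Applied to the differences $f_{n_k}-f_{n_l}$ of the subsequence produced by the core step---whose $H^\gamma(0,T;H_1)$-norms are controlled by the original $H^\gamma(0,T;H_0)$-bound and whose $L^2(0,T;H_1)$-norms are Cauchy---this shows the subsequence is Cauchy, and hence convergent, in $H^\mu(0,T;H_1)$.

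The main technical subtlety is that the pointwise Hölder bound on $\R$ requires both the $L^2$- and the $H^\gamma$-norms to be evaluated on the \emph{same} extension, whereas the $H^\mu(0,T;\cdot)$-norm on the left is defined by an infimum over extensions. This is resolved by fixing a single bounded linear extension operator $E\colon H^\gamma(0,T;H_1)\to H^\gamma(\R;H_1)$ that is simultaneously bounded as $L^2(0,T;H_1)\to L^2(\R;H_1)$; such an $E$ is supplied by the standard reflection-and-truncation construction transferred to the Bochner setting. Equivalently, the entire $0<\mu<\gamma$ case can be obtained at once from interpolation of compact operators on the Hilbert scale $\{H^s(0,T;H_1)\}_{s\in\R}$, using that the core compact embedding and the trivial continuous embedding into $H^\gamma(0,T;H_1)$ interpolate to a compact embedding at every intermediate exponent.
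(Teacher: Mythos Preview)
Your proposal is correct. The paper itself does not prove Lemma~\ref{Lm6.3}; it simply refers to \cite[Ap.~A.2]{Guermond_2007}. The argument there is in the same spirit as yours: one first obtains compactness of $H^\gamma(0,T;H_0)\hookrightarrow L^2(0,T;H_1)$ from the Aubin--Lions type Lemma~\ref{Lm6.1}, and then upgrades to $H^\mu(0,T;H_1)$ by the Fourier interpolation inequality $\|g\|_{H^\mu}\le\|g\|_{L^2}^{1-\theta}\|g\|_{H^\gamma}^{\theta}$ applied to differences. Your handling of the two technical points---restoring density by passing to $\overline{H_0}^{H_1}$ before invoking Lemma~\ref{Lm6.1}, and fixing a single extension operator bounded simultaneously on $L^2$ and $H^\gamma$ to transfer the interpolation inequality from $\R$ to $(0,T)$---is exactly what is needed and is carried out correctly.
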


\begin{theorem}\label{Thm6.4} Assume that hypotheses $\rm (H1)$-$\rm(H4)$ are satisfied. Then there exists a subsequence (denoted in the same way) of approximate solutions $(\u_h, p_h)$ converging toward a weak solution given in  Definition \eqref{def:weak-solution3} in the following sense as $h\to 0$: 
\begin{equation}\label{Thm6.3-lab1}
\u_h \to \u \mbox{ in } L^2(0,T; \H^1_0(\Omega))-weak \mbox{ and in } L^2(0,T; \H^\beta(\Omega))-strong \mbox{ for all }\beta<1     
\end{equation}
and
\begin{equation}\label{Thm6.3-lab2}
p_h\to p \mbox{ in } H^{-r} (0,T; H^\delta(\Omega))-weak  \mbox{ for all } \delta\in[\frac{3}{10}, \frac{1}{2}] \mbox{ and } r>\frac{1}{4}+\frac{\delta}{2}.  
\end{equation}
\end{theorem}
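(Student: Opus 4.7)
The plan is to extract weakly convergent subsequences from the a priori bounds, upgrade the convergence of the velocity to strong via an Aubin--Lions argument (Lemma \ref{Lm6.1}), and then pass to the limit in the discrete formulation \eqref{eq:GalerkinNS}. First, from the energy estimate \eqref{energy-estimate}, $\{\u_h\}_{h>0}$ is bounded in $L^\infty(0,T;\H)\cap L^2(0,T;\V)$, hence up to a subsequence $\u_h\rightharpoonup\u$ weakly in $L^2(0,T;\H^1_0(\Omega))$ and weakly-$\star$ in $L^\infty(0,T;\L^2(\Omega))$. For the subgrid component, \eqref{energy-estimate} combined with $\tau^{-1}\ge C_s\nu h^{-2}$ gives $\|\tilde\u_h\|_{L^2(0,T;\L^2(\Omega))}\le C h$, so $\tilde\u_h\to\boldsymbol 0$ strongly in $L^2(0,T;\L^2(\Omega))$. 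Moreover, the pressure bound \eqref{lm5.9-lab1}, upon relabelling $\delta=1-s$, yields $\delta\in[\frac{3}{10},\frac{1}{2}]$ and $r>\bar r=\frac{1}{4}+\frac{\delta}{2}$, from which $p_h\rightharpoonup p$ weakly in $H^{-r}(0,T;H^\delta(\Omega))$, proving \eqref{Thm6.3-lab2}.

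To upgrade to the strong convergence \eqref{Thm6.3-lab1}, I apply Lemma \ref{Lm6.1} with the Hilbert triple $H_0=\H^1_0(\Omega)\hookrightarrow H=\H^\beta(\Omega)\hookrightarrow H_1=\tilde\H^{-\alpha}_0(\Omega)$ for some fixed $\alpha\in[\frac{1}{4},\frac{1}{2})$ and any $\beta<1$. The embedding $H_0\hookrightarrow H$ is compact for $\beta<1$ and $H\hookrightarrow H_1$ is continuous. The fractional time regularity \eqref{co5.8-lab1} provides $\u_h\in H^{\beta'}(0,T;\tilde\H^{-\alpha}_0(\Omega))$ uniformly for some $\beta'>0$, so Lemma \ref{Lm6.1} gives a subsequence with $\u_h\to\u$ strongly in $L^2(0,T;\H^\beta(\Omega))$ for every $\beta<1$.

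Next I pass to the limit in \eqref{eq:GalerkinNS}. Pick $\v\in\mathcal{D}([0,T)\times\Omega)^3$ with $\nabla\cdot\v=\boldsymbol 0$ and $\v(T)=\boldsymbol 0$, and test \eqref{eq:Galerkin-u} with the $\L^2$-projection $\v_h=\pi_{\W_h}\v\in\W_h$. The linear terms pass to the limit by weak convergence together with the approximation property \eqref{approx-velocity}, which gives $\v_h\to\v$ in $L^2(0,T;\H^1_0(\Omega))$. For the trilinear convective term I use the strong convergence of $\u_h$ in $L^2(0,T;\L^r(\Omega))$ for some $r>2$ (obtained from \eqref{Thm6.3-lab1} by Sobolev embedding $\H^\beta\hookrightarrow\L^r$), combined with the weak convergence of $\nabla\u_h$, in the standard Navier--Stokes pattern. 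The pressure term $(p_h,\nabla\cdot\v_h)$ does not vanish in general (since $\nabla\cdot\v_h\ne 0$), but it equals $-(p_h,\nabla\cdot\pi_{\W_h}^\perp\v)$, and by the interpolation estimate \eqref{approx-velocity} it is of order $h^{1-\delta}$ times $\|p_h\|_{H^{-r}(0,T;H^\delta)}$ after test-function smoothing, and therefore tends to zero. The subgrid term $b(\u_h,\v_h,\tilde\u_h)$ is dominated by $\|\u_h\|_{L^\infty(\L^3)}\|\nabla\v_h\|_{L^\infty(L^6)}\|\tilde\u_h\|_{L^2(\L^2)}$, which vanishes because $\|\tilde\u_h\|_{L^2(\L^2)}=O(h)$. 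The pressure equation \eqref{eq:Galerkin-p} is used to identify $p$ via the inf-sup condition proved earlier; testing with arbitrary (not necessarily solenoidal) $\v$ then produces the momentum equation in $H^{-r}(0,T;\tilde\H^{-s}_0(\Omega))$ of Definition \ref{def:weak-solution3}. Finally, the energy inequality follows from the discrete energy identity obtained in the proof of \eqref{energy-estimate} by weak lower semicontinuity of the dissipation and strong convergence in $L^2(\L^2)$ of $\u_h$.

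The main obstacle is the treatment of the convective and subgrid terms simultaneously with the non-conformity of the test function projection: one must verify that \emph{all} of the extra terms in the VMS scheme---in particular, the coupling $b(\u_h,\v_h,\tilde\u_h)$ and the non-conforming consistency errors produced by replacing $\v$ by $\v_h=\pi_{\W_h}\v$ in an equation that contains the stabilization $\tau^{-1}(\tilde\u_h,\tilde\v_h)$ and the pressure gradient $(\nabla p_h,\tilde\v_h)$---vanish in the limit. This requires exploiting the sharp scaling $\|\tilde\u_h\|\lesssim h$ together with the inverse and interpolation estimates in $\rm(H3)$, and carefully accounting for the fractional Sobolev scales developed in Section \ref{sec:tech}; the compactness and weak--strong combinations described above supply the rest of the argument.
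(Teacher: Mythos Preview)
Your approach is essentially that of the paper: extract weak limits from the a priori bounds \eqref{energy-estimate}, \eqref{co5.8-lab1}, \eqref{lm5.9-lab1}, get $\tilde\u_h\to\boldsymbol 0$ strongly from $\tau^{-1}\ge C_s\nu h^{-2}$, upgrade the velocity convergence to strong via the Aubin--Lions Lemma~\ref{Lm6.1} with the triple $\H^1_0\hookrightarrow\H^\beta\hookrightarrow\tilde\H^{-\alpha}_0$, and then pass to the limit in the nonlinear term; the relabelling $\delta=1-s$ for the pressure is exactly right.

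Two points where the paper differs and which you should address. First, the paper does \emph{not} pass to the limit by testing with $\pi_{\W_h}\v$ for a fixed smooth solenoidal $\v$; instead it uses the separate uniform bounds on $\partial_t\u_h$, $\Delta_h\u_h$, $\mathcal N(\u_h,\u_h)$ and $\nabla p_h$ in $H^{-r}(0,T;\tilde\H^{-s}_0(\Omega))$ (equations \eqref{co5.8-lab1}, \eqref{co5.8-lab2}, \eqref{lm5.5-lab1}, \eqref{lm5.9-lab1}) to get weak convergence of each term directly in that dual space, which is precisely the space in which Definition~\ref{def:weak-solution3} states the momentum equation. Your route via projected test functions is valid, but you should also test \eqref{eq:Galerkin-p} with $q_h=\pi_{Q_h}q$ and pass to the limit to obtain $\nabla\cdot\u=0$ (the role of \eqref{eq:Galerkin-p} is to recover incompressibility, not to ``identify $p$''). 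Second, you omit the verification of the initial condition $\u(0)=\u_0$ in $\H$, which is part of Definition~\ref{def:weak-solution3}. The paper obtains this via Lemma~\ref{Lm6.2}: since $\bar\beta=\frac{2}{5}(1+\alpha)>\frac{1}{2}$ for $\alpha\in(\frac{1}{4},\frac{1}{2})$, the bound \eqref{co5.8-lab1} and the compact embedding $H^\beta(0,T;\L^2)\hookrightarrow C^0([0,T];\tilde\H^{-\alpha}_0)$ give $\u_h(0)\to\u(0)$ in $\tilde\H^{-\alpha}_0(\Omega)$, while \eqref{proj_u0} and \eqref{lm4.2-lab3} give $\u_{0h}\to\u_0$.
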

\begin{proof} Let $\v\in H^r(0,T; \tilde\H^s_0(\Omega))$, for $s\in (\frac{1}{2}, \frac{7}{10}]$ and $ r>\frac{3}{4}-\frac{s}{2}$, and $q\in L^2(0,T; H^1_{\int=0}(\Omega))$. From \eqref{approx-velocity} and \eqref{approx-pressure}, we are allowed to construct three sequences $\{\v_h\}_{h>0}\subset H^r(0,T; \W_h)$, $\{\tilde\v_h\}_{h>0}\subset H^r(0,T; \tilde\W_h)$ and $\{q_h\}_{h>0}\subset L^2(0,T; Q_h)$ such that $\v_h\to \v$ in $H^r(0,T;\tilde\H^s_0(\Omega))$-strong, $\tilde\v_h\to \boldsymbol{0}$ in $L^2(0,T; \L^2(\Omega))$-strong and $q_h\to q$ in $L^2(0,T; H^1_{\int=0}(\Omega))$-strong as $h\to0$.

By virtue of \eqref{energy-estimate}, \eqref{co5.8-lab1}, \eqref{co5.8-lab2} and \eqref{lm5.9-lab1}, we know that there exist a subsequence of $\{\v_h\}_{h>0}$ and $\{p_h\}_{h>0}$, still denoted by itself, and a pair $(\u, p)$ such that
$$
\u_h \to \u \mbox{ in } L^\infty(0,T; \L^2(\Omega))-\mbox{weak-$\star$},     
$$
$$
\u_h \to \u \mbox{ in } L^2(0,T; \H^1_0(\Omega))-\mbox{weak},     
$$
$$
\partial_t\u_h \to \partial_t\u \mbox{ in } H^{-r}(0,T; \tilde\H^{-s}_0(\Omega))-\mbox{weak}, 
$$
$$
\Delta_h\u_h \to \Delta\u \mbox{ in } H^{-r}(0,T; \tilde\H^{-s}_0(\Omega))-\mbox{weak}, 
$$
and
$$
\nabla p_h\to \nabla p \mbox{ in } H^{-r} (0,T; \tilde\H^{-s}_0(\Omega))-\mbox{weak},  
$$
for all $s\in(\frac{1}{2}, \frac{7}{10}]$ and  $ r>\frac{3}{4}-\frac{s}{2}$. Observe that we have used that the fact that $\tilde\H^{-s}(\Omega)$ coincides with $\H^{-s}_0(\Omega)$ for $s\in(\frac{1}{4}, \frac{7}{10}]$ for the pressure.  We also have, from \eqref{energy-estimate}, that
\begin{equation}\label{Thm6.4-lab1}
\tilde\u_h\to 0\quad \mbox{ in }\quad L^2(0,T; \L^2(\Omega))-\mbox{strong},
\end{equation}
since
$$
\frac{\nu^\frac{1}{2}}{h}\|\tilde\u_h\|_{L^2(0,T;\L^2(\Omega))}\le \|\tau^{-\frac{1}{2}}\tilde\u_h\|_{L^2(0,T; \L^2(\Omega))}\le C.
$$

We can pass to the limit in \eqref{eq:Galerkin-p}. Thus we find that $\nabla\cdot \u=0$ in $(0,T)\times \Omega$, whence $\u\in L^\infty (0,T; \H)\cap L^2(0,T; \V)$. 
For the trilinear terms, we proceed as follows.  By Lemma \ref{Lm6.1},  we have that 
$$
\u_h \to \u \mbox{ in } L^2(0,T; \H^\beta(\Omega))-strong \mbox{ for all }\beta<1,     
$$
since $\{\u_h\}_{h>0}$ is bounded in  $L^2(0,T; \H^1_0(\Omega))\cap H^{\beta}((0,T);\tilde\H^{-\alpha}_0(\Omega))$ for $\alpha\in[\frac{1}{4},\frac{1}{2})$ and $0<\beta <\frac{2}{5}(1+\alpha)$ from \eqref{energy-estimate} and  \eqref{co5.8-lab1}. 
Therefore,
$$\mathcal{N}(\u_h,\u_h)\to \mathcal{N}(\u,\u)\mbox{ in } \mathcal{D}' ((0,T)\times\Omega).$$
As a consequence of \eqref{lm5.5-lab1}, we obtain            
$$\mathcal{N}(\u_h,\u_h)\to \mathcal{N}(\u,\u)\mbox{ in } H^{-r}(0,T;  \tilde\H^{-s}_0(\Omega)).$$
On passing to the limit in \eqref{eq:Galerkin-p}, we have had that $\nabla\cdot\u=0$ in $(0,T)\times \Omega$, thereby 
$$\mathcal{N}(\u_h,\u_h)\to (\u\cdot\nabla)\u\mbox{ in } H^{-r}(0,T;  \tilde\H^{-s}_0(\Omega)).$$
By an analogous argument, we find that 
$$\tilde{\mathcal{N}}(\u_h, \tilde\u_h)\to \boldsymbol{0}\mbox{ in } H^{-r}(0,T;  \tilde\H^{-s}_0(\Omega)),$$
where $\langle \tilde{\mathcal{N}}(\u_h,\tilde\w_h), \v_h \rangle=b(\u_h, \v_h, \tilde\w_h)$ for all $\u_h,\w_h\in \W_h$ and $\tilde\v_h\in\tilde\W_h$.

From the above convergences, it is easy to see that 
$$
\partial_t \u_h+\mathcal{N}(\u_h,\u_h)-\nu\Delta_h\u_h +\nabla p_h-\tilde{\mathcal{N}}(\u_h, \tilde\u_h)-\f_h \to  \partial_t \u+(\u_h\cdot\nabla)\u_h-\nu \Delta\u+\nabla p-\f.
$$
in $H^{-r}(0,T; \tilde \H^{-s}_0(\Omega))$ as $h\to 0$.

For the initial condition,  we have that $\u_h\to \u$ in $C^0([0,T]; \tilde\H^{-\alpha}_0(\Omega))$-strong for $\alpha\in (\frac{1}{4},\frac{1}{2})$ by Lemma~\ref{Lm6.2}; therefore, $\u_h(0)\to \u(0)$ in $\tilde\H^{-\alpha}_0(\Omega)$. Furthermore, it follows from \eqref{proj_u0} and \eqref{lm4.2-lab3} that $\u_{0h}\to\u_0$ in $\tilde\H^{-\alpha}_0(\Omega)$. We have thus shown that $\u(0)=\u_0$.

The energy inequality can be verified by the lower semicontinuity of the norm for the weak topology; for complete details, see  \cite{Badia-Gutierrez}.             
\end{proof}

\begin{theorem} Under hypotheses $\rm (H1)$-$\rm(H4)$, the sequence of approximate solutions $(\u_h, p_h)$ converges, up to a subsequence, to a suitable weak solution given in Definition \ref{Suitable_solution} as $h\to0$.  
\end{theorem}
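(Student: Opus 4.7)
The plan is to mimic Guermond's argument \cite{Guermond_2007} for the plain Galerkin FEM, adapted to accommodate the dynamic orthogonal subgrid scale $\tilde\u_h$, the cross-term $-b(\u_h,\v_h,\tilde\u_h)$ in \eqref{eq:Galerkin-u}, and the absence of a discrete inf-sup condition. Fix a nonnegative $\varphi\in\mathcal{D}((0,T)\times\Omega;\R^+)$; the goal is to derive a localized discrete energy identity of the form
\begin{equation*}
\tfrac{1}{2}\tfrac{d}{dt}(|\u_h|^2,\varphi)+\nu(|\nabla\u_h|^2,\varphi)-((\tfrac{1}{2}|\u_h|^2+p_h)\u_h,\nabla\varphi)-\tfrac{\nu}{2}(|\u_h|^2,\Delta\varphi)-(\f_h\cdot\u_h,\varphi)=\mathcal{R}_h(\varphi),
\end{equation*}
where the remainder $\mathcal{R}_h(\varphi)$ collects all nonconforming residuals and tends to $0$ in $\mathcal{D}'((0,T)\times\Omega)$ as $h\to 0$.

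To obtain this identity, I would test \eqref{eq:Galerkin-u} with $\v_h:=\pi_{\W_h}(\varphi\u_h)\in\W_h$, \eqref{eq:Sub-Scale} with a suitable $\tilde\v_h\in\tilde\W_h$ approximating $\varphi\tilde\u_h$, and \eqref{eq:Galerkin-p} with $q_h:=\pi_{Q_h}(\varphi p_h)\in Q_h$, and then sum the three identities. Integration by parts in the convective, viscous and pressure terms produces the target $\nabla\cdot((\tfrac{1}{2}|\u_h|^2+p_h)\u_h)$, $-\tfrac{\nu}{2}|\u_h|^2\Delta\varphi$ and $\nu|\nabla\u_h|^2\varphi$ contributions, while the dissipative subgrid penalty $(\tau^{-1}|\tilde\u_h|^2,\varphi)\ge 0$ can be moved to the right-hand side and dropped, preserving the inequality sign required by Definition \ref{Suitable_solution}.

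The bulk of the work is showing $\mathcal{R}_h(\varphi)\to 0$ in $\mathcal{D}'$. The velocity and pressure commutator errors $\varphi\u_h-\pi_{\W_h}(\varphi\u_h)$ and $\varphi p_h-\pi_{Q_h}(\varphi p_h)$ are controlled by \eqref{comm-prop-vel}--\eqref{comm-prop-pre} together with the uniform energy bounds of Section \ref{sec:energy}. The crucial quantitative input for the subgrid residuals is the decay $\|\tilde\u_h\|_{L^2(0,T;\L^2(\Omega))}=O(h)$, which follows from $\nu^{1/2}h^{-1}\|\tilde\u_h\|_{L^2(0,T;\L^2(\Omega))}\le\|\tau^{-1/2}\tilde\u_h\|_{L^2(0,T;\L^2(\Omega))}\le C$; combined with the bounds \eqref{est_LrLk} and H\"older's inequality, this reduces the cross-terms $b(\u_h,\pi_{\W_h}(\varphi\u_h),\tilde\u_h)$, $b(\u_h,\u_h,\tilde\v_h)$ and $(\nabla p_h,\tilde\v_h)$ to quantities of order $h^{\alpha}$ with some $\alpha>0$. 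The passage to the limit in the cubic nonlinearity uses the strong convergence $\u_h\to\u$ in $L^2(0,T;\H^\beta(\Omega))$ for $\beta<1$ furnished by Theorem \ref{Thm6.4}.

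The closing ingredient is the lower semicontinuity $\liminf_{h\to 0}\nu(|\nabla\u_h|^2,\varphi)\ge\nu(|\nabla\u|^2,\varphi)$, which is standard from the weak convergence of $\nabla\u_h$ in $L^2$ and the nonnegativity of $\varphi$, and is what upgrades the limit identity to an \emph{inequality} in the correct direction. The main obstacle is the treatment of the localized subgrid cross-terms: in the global energy identity the contributions $-b(\u_h,\u_h,\tilde\u_h)+b(\u_h,\u_h,\tilde\u_h)=0$ cancel exactly, but the localization by $\varphi$ destroys this cancellation and produces a residual of the schematic form $b(\u_h,\pi_{\W_h}^\perp(\varphi\u_h),\tilde\u_h)$; its smallness must be extracted by combining \eqref{comm-prop-vel} (which gives $\pi_{\W_h}^\perp(\varphi\u_h)=O(h)$ in an appropriate norm) with the $O(h)$ decay of $\tilde\u_h$ and an interpolation between $L^\infty(\L^2)$ and $L^2(\H^1)$ on $\u_h$. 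The corresponding power of $h$ is what ultimately closes the estimate, and its sharp bookkeeping is the most delicate step of the argument.
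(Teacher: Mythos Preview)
Your overall strategy---test \eqref{eq:Galerkin-u} with $\pi_{\W_h}(\varphi\u_h)$, use \eqref{eq:Galerkin-p} with $\pi_{Q_h}(\varphi p_h)$, invoke the commutator properties \eqref{comm-prop-vel}--\eqref{comm-prop-pre}, and exploit the decay $\|\tilde\u_h\|_{L^2(0,T;\L^2(\Omega))}=O(h)$---matches the paper, and your identification of $b(\u_h,\pi^\perp_{\W_h}(\varphi\u_h),\tilde\u_h)$ as a delicate residual is correct. One deviation: the paper does \emph{not} test the subscale equation \eqref{eq:Sub-Scale}. Your plan to choose ``a suitable $\tilde\v_h\in\tilde\W_h$ approximating $\varphi\tilde\u_h$'' is problematic because no commutator property is available for $\tilde\W_h$ (hypotheses \eqref{comm-prop-vel}--\eqref{comm-prop-pre} concern only $\W_h$ and $Q_h$), so there is no mechanism to control $\varphi\tilde\u_h-\tilde\v_h$. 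The paper instead shows directly that the single subscale cross-term $-b(\u_h,\pi_{\W_h}(\varphi\u_h),\tilde\u_h)$ in \eqref{eq:Galerkin-u} vanishes, splitting it into three pieces each of order $h^{3/4}$ via \eqref{comm-prop-vel}, inverse inequalities, and the $\tau^{-1/2}L^2$ bound on $\tilde\u_h$.

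The genuine gap is your treatment of the pressure--subscale coupling. You assert that $(\nabla p_h,\tilde\v_h)$ is $O(h^\alpha)$ ``combined with the bounds \eqref{est_LrLk} and H\"older's inequality'', but the only uniform pressure control is \eqref{lm5.9-lab1}, namely $p_h\in H^{-r}(0,T;H^{1-s}(\Omega))$ with $r>0$: this is \emph{negative} time regularity, so no $L^p_t$ bound on $\nabla p_h$ exists and plain H\"older in time cannot close. In the paper this coupling surfaces (after inserting $q_h=\pi_{Q_h}(\varphi p_h)$ into \eqref{eq:Galerkin-p}) as $\int_0^T(\nabla(\varphi p_h)-\nabla\pi_{Q_h}(\varphi p_h),\tilde\u_h)\,\dt$, and is the most delicate step of the whole proof. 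One must pair the negative-time pressure estimate against the \emph{positive}-time fractional bound on $\tilde\u_h$ furnished by Corollary~\ref{co5.7}, i.e.\ $\|\tilde\u_h\|_{H^r(0,T;\L^2(\Omega))}\le C h^{-\alpha}$, and then choose parameters so that simultaneously $1-s>\alpha$ (to extract a positive power $h^{1-s-\alpha}$ from \eqref{comm-prop-pre}) and $\bar\beta=\tfrac{2}{5}(1+\alpha)>r>\bar r=\tfrac{3}{4}-\tfrac{s}{2}$ (to close the time duality). The paper makes an explicit choice $s=\tfrac12+\tfrac{16}{9}\varepsilon$, $r=\tfrac12-\tfrac{4}{9}\varepsilon$, $\alpha=\tfrac14-\tfrac{5}{9}\varepsilon$ to satisfy all constraints. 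Without this fractional-in-time machinery your argument does not close.
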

\begin{proof} Let $\phi\in \mathcal{D}((0,T)\times\Omega; \R^+)$ and substitute $\v_h=\pi_{\W_h}(\u_h\phi)$ into \eqref{eq:Galerkin-u} 
to get
\begin{equation}\label{Thm6.5-lab1}
\begin{array}{l}
\displaystyle \int_0^T \{(\partial_t\u_h,\pi_{\W_h}(\u_h\phi))+b(\u_h,\u_h,
\pi_{\W_h}(\u_h\phi))+\nu(\nabla\u_h,\nabla\pi_{\W_h}(\u_h\phi))
\\
\displaystyle
-(p_h,\nabla\cdot\pi_{\W_h}(\u_h\phi))- b(\u_h,\pi_{\W_h}(\u_h\phi),\tilde\u_h)-(\f_h,\pi_{\W_h}(\u_h\phi))\}\,\dt=0.
\end{array}
\end{equation}

We are ready to take the limit in \eqref{Thm6.5-lab1} as $h\to0$ so as to prove that the weak solution $(\u, p)$ found in Theorem \ref{Thm6.4} is suitable. We will only focus on passing to the limit in the terms of \eqref{Thm6.5-lab1} involving the subscale velocity $\tilde\u_h$ and the pressure term. The remaining terms appear in a rudimentary  finite element formulation so that a proof can be found in \cite{Guermond_2009}. In particular, from \eqref{comm-prop-vel} and \eqref{inverseH^s-L^2andL^2-H^-s} and in virtue of Lemma \ref{Lm6.3}, it follows that 
$$
\lim_{h\to0}\int_0^T (\partial_t\u_h,\u_h\phi)\,\dt= -\frac{1}{2}\int_0^T (|\u|^2,\partial_t\phi)\,\dt,
$$
$$
\lim_{h\to0}\int_0^T b(\u_h,\u_h,\pi_{\W_h}(\u_h\phi))\, \dt= -\frac{1}{2}\int_0^T (|\u|^2\u, \nabla\phi)\,dt, 
$$   
$$
 \liminf_{h\to0} \int_0^T\nu(\nabla\u_h,\nabla\pi_{\W_h}(\u_h\phi))\,\dt\ge \int_0^T (|\nabla\u|^2, \phi)\, \dt-\int_{0}^T(\frac{1}{2}|\u|^2, \Delta\phi)\,\dt,
$$
and
$$
\lim_{h\to0} -\int_0^T\langle\f_h,\u_h\phi\rangle\,\dt=-\int_0^T \langle\f,\u\phi\rangle\, \dt.
$$

To begin with, we first turn our attention to passing to the limit in the convective term.
\begin{equation}\label{Thm6.5-lab5}
\begin{array}{rcl}
b(\u_h,\pi_{\W_h}(\u_h\phi),\tilde\u_h)\, \dt&=&b(\u_h,\u_h\phi,\tilde\u_h)\,\dt+b(\u_h,\pi_{\W_h}(\u_h\phi)-\u_h\phi,\tilde\u_h)\,\dt
\\
&=&  b (\u_h, \u_h, \tilde\u_h\phi)+(\u_h\cdot\nabla\phi \u_h, \tilde\u_h)+b(\u_h,\pi_{\W_h}^\perp(\u_h\phi),\tilde\u_h)
\\ 
&=& (\pi^\perp_{\W_h} (\mathcal{N}(\u_h,\u_h)\phi), \tilde\u_h)+ (\pi_{\W_h}^\perp(\u_h\cdot\nabla\phi \u_h), \tilde\u_h)
\\
&&+b(\u_h,\pi_{\W_h}^\perp(\u_h\phi),\tilde\u_h).
\end{array}
\end{equation}
From \eqref{comm-prop-vel} and \eqref{inverseL^inf-L^k}, we have:
\begin{align*}
\int_0^T (\pi^\perp_{\W_h} (\mathcal{N}(\u_h,\u_h)\phi), \tilde\u_h)\, \dt\le& \int_0^T \|\pi^\perp_{\W_h} (\mathcal{N}(\u_h,\u_h)\phi), \tilde\u_h)\| \|\tilde\u_h\| \dt\le C  \int_0^T h \|\u_h\|_{\L^\infty(\Omega)} \|\nabla\u_h\| \|\tilde\u_h\|\, \dt
\\
\le& C  \left(\int_0^T \tau h^2 \|\u_h\|^2_{\L^\infty(\Omega)} \|\nabla\u_h\|^2\,\dt \right)^{\frac{1}{2}} \left(\int_0^T \tau^{-1} \|\tilde\u_h\|^2\,\dt\right)^{\frac{1}{2}}
\\
\le& C h^{\frac{3}{4}} \|\u_h\|^\frac{1}{2}_{L^\infty(0,T; \L^2(\Omega))} \|\u_h\|_{L^2(0,T; \H^1_0(\Omega))} \|\tilde\u_h\|_{\tau^{-\frac{1}{2}} L^2(0,T; \L^2(\Omega))}
\end{align*}  
and hence
$$
\lim_{h\to 0} \int_0^T (\pi^\perp_{\W_h} (\mathcal{N}(\u_h,\u_h)\phi), \tilde\u_h)\, \dt=0.
$$
Analogously, we bound
$$
\int_0^T (\u_h\cdot\nabla\phi, \u_h\cdot\tilde\u_h)\,\dt\le C T h^{\frac{3}{4}} \|\u_h\|_{L^\infty(0,T; \L^2(\Omega))} \|\tilde\u_h\|_{\tau^{-\frac{1}{2}}L(0,T; \L^2(\Omega))} 
$$
and
$$
\int_0^T b(\u_h,\pi_{\W_h}(\u_h\phi)-\u_h\phi,\tilde\u_h)\,\dt\le C h^{\frac{3}{4}} \|\u_h\|^\frac{1}{2}_{L^\infty(0,T; \L^2(\Omega))} \|\u_h\|_{L^2(0,T; \H^1_0(\Omega))} \|\tilde\u_h\|_{\tau^{-\frac{1}{2}} L^2(0,T; \L^2(\Omega))}.
$$
Thus
$$
\lim_{h\to0} \int_0^T (\u_h\cdot\nabla\phi, \u_h\cdot\tilde\u_h)\,\dt=0,
$$
and  
$$
\lim_{h\to 0}\int_0^T b(\u_h,\pi_{\W_h}(\u_h\phi)-\u_h\phi,\tilde\u_h)\,dt=0. 
$$

For the ``viscous" term, it is not hard to see that
$$
\liminf_{h\to 0}\int_0^T \tau^{-1}(|\tilde\u_h|^2,\phi)\,\dt\ge0.
$$ 

For the pressure terms, we write 
$$
\int_0^T (p_h,\nabla\cdot \pi_{\W_h}(\u_h\phi))\,\dt=\int_0^T (p_h\u_h, \nabla\phi)\,dt+\int_0^T (p_h, \nabla \cdot (\pi_{\W_h}(\u_h\phi)-(\u_h\phi)))\,\dt+\int_0^T (\phi p_h,\nabla\cdot\u_h)\,\dt.
$$
It was proved in \cite{Guermond_2007} that 
$$
\lim_{h\to 0}  \int_0^T (p_h \u_h,\nabla\phi)\,\dt=\int_0^T (p u, \nabla\phi)\,\dt
$$
and
$$
\lim_{h\to0}=\int_0^T (p_h, \nabla \cdot (\pi_{\W_h}(\u_h\phi)-(\u_h\phi)))\,\dt=0. 
$$
For the remaining pressure terms, we use \eqref{eq:Galerkin-p} with $q_h=\pi_{Q_h} (\phi p_h)$ to obtain
\begin{align*}
\int_0^T (\phi p_h,\nabla\cdot\u_h)\,\dt&+\int_0^T (\nabla p_h, \tilde\u_h\phi)=\int_0^T (p_h\phi-\pi_{Q_h} (p_h\phi), \nabla\cdot\u_h)\,\dt
\\
&+\int_0^T (\nabla(p_h\phi)-\nabla\pi_{Q_h}(p_h\phi), \tilde\u_h)\,\dt-\int_0^T (p_h\nabla\phi, \tilde\u_h)\,\dt. 
\end{align*}
We know from \cite{Guermond_2007} that 
$$
\lim_{h\to0} \int_0^T (p_h\phi-\pi_{Q_h} (p_h\phi), \nabla\cdot\u_h)\,\dt=0.
$$

Let $\varepsilon>0$  and set $s=\frac{1}{2}+\frac{16}{9}\varepsilon$ and $\bar r=\frac{3}{4}-\frac{s}{2}=\frac{1}{4}-\frac{4}{9}\varepsilon$. Now choose $r=\frac{1}{2}-\frac{4}{9}\varepsilon$. Moreover, set $\alpha=\frac{1}{4}-\frac{5}{9}\varepsilon$ and $\bar\beta=\frac{2}{5} (1+\alpha)=\frac{1}{5}-\frac{2}{9}\varepsilon$. Thus we have $1-s>\alpha$ and $\bar\beta>r$ since
$$
1-s=\frac{1}{2}-\frac{16}{9}\varepsilon>\frac{1}{2}(\frac{1}{2}-\frac{16}{9}\varepsilon)=\frac{1}{4}-\frac{4}{9}\varepsilon>\frac{1}{4}-\frac{5}{9}\varepsilon=\alpha
$$
and
$$
r=\frac{1}{2}-\frac{4}{9}\varepsilon<\frac{1}{2}-\frac{2}{9}\varepsilon=\frac{2}{5}(\frac{5}{4}-\frac{5}{9}\varepsilon)=\frac{2}{5}(1+\frac{1}{4}-\frac{5}{9}\varepsilon)=\frac{2}{5}(1+\alpha)=\bar\beta.
$$
From the a priori energy estimates \eqref{co5.7-lab1} and \eqref{lm5.9-lab1} and the commutator property \eqref{comm-prop-pre}, our choice of parameters yields
\begin{align*}
\int_0^T (\nabla(p \phi)-\nabla\pi_{Q_h}(p_h\phi),\tilde\u_h)\,\dt&\le \|\nabla(p \phi)-\nabla\pi_{Q_h}(p_h\phi)\|_{H^{-r}(0,T; \L^2(\Omega))} \|\tilde\u_h\|_{H^r(0,T; \L^2(\Omega))} 
\\
&\le C {h^{1-s-\alpha}} \|p_h\|_{H^{-r}(0,T; H^{1-s}(\Omega))} \|\tilde\u_h\|_{h^{\alpha}H^r(0,T;\L^2(\Omega))}
\end{align*}
and hence
$$
\lim_{h\to0} \int_0^T (\nabla(\phi)-\nabla\pi_{Q_h}(p_h\phi),\tilde\u_h)\,\dt=0.
$$
Finally, it is easy to see in a similar fashion that 
$$
\lim_{h\to 0}\int_0^T (\phi p_h,\nabla\cdot\u_h)\,\dt=\lim_{h\to} \int_0^T (\phi p_h-\pi_{\W_h}(\phi p_h),\nabla\cdot\u_h)\,\dt=0.
$$

\end{proof}
\appendix
\section{Proof of the inverse inequalities \eqref{inverseH^1-H^s}}\label{App-A}

To prove inequalities \eqref{inverseH^1-H^s}, we follow very closely the arguments developed in \cite[Thm. 4.5.11]{Brenner-Scott}. 

We first need to introduce an equivalent norm for fractional order Hilbert spaces as follows. Let $s\in(0,1)$. Then 
$$
\|u\|^2_{H^s(\Omega)}=\|u\|^2+|u|^2_{H^s(\Omega)},
$$
where
$$
|u|^2_{H^s(\Omega)}=\int_\Omega\int_\Omega \frac{|u(\x)-u(\y)|^2}{|\x-\y|^{3+2s}}\,{\rm d}\x\,{\rm d}\y.
$$

Given $(K,\mathcal{P}, \Sigma)$, we define $(\tilde K, \tilde P, \tilde\Sigma)$ where $\hat K=\{(1/h_K)\x : \x\in K\}$. Thus, if $u_h$ is a function defined on $K$, then $\hat u_h$ is defined on $ \tilde K$ by
$$
\hat u(\hat \x)=u ( h_K^{-1}\x)\quad \mbox{ for all } \quad \hat \x\in \hat K. 
$$
Thus we can write
$$
\|\nabla u_h\|_{\L^2(K)}= h_K^{\frac{1}{2}} \|\hat \nabla\hat u_h\|_{\L^2(K)}.
$$ 
As $\hat \nabla\hat u_h$ belongs to  a space of finite and fixed dimension on $\hat K$, on which all norms are equivalent, it is not hard to see that there is a constant $ C_{\hat T}>0$ such that 
$$
\|\hat \nabla\hat u_h\|_{\L^2(K)}\le  C_{\hat T} |\hat u_h|_{H^s(\hat K)}.
$$
Reverting to $K$, this leads to
$$
\|\hat \nabla \hat u\|_{\L^2}\le   C_{\hat T} h_K^{-\frac{3}{2}+s} |u_h|_{H^{s}(K)} 
$$
and hence
$$
\|\nabla u_h\|_{\L^2(K)}\le C_{\hat T} h^{-1+s}_K \|u_h\|_{H^s(\Omega)}.
$$
An argument in the proof of \cite[Prop. 4.4.11 ]{Brenner-Scott} shows that if $(\tilde K, \tilde{\mathcal{P}}, \tilde\Sigma)$ is a referent element, we have that there exists a constant $C_{\tilde T}>0$ such that $ C_{\hat T}\le C_{\tilde T}$.  
Summing over all elements $K$ and using the quasi-uniformity of the mesh leads to 
$$
\|\nabla u_h\|\le C h^{-1+s} \left(\sum_{K\in\mathcal{T}_h} \|u_h\|_{H^s(\Omega)}^2\right).
$$
Then \eqref{inverseH^1-H^s} follows because the sum of the fractional norms over all elements is smaller than the fractional norm over the union of the elements.
\section*{Acknowledgment}
The authors are very grateful to Professor Vivette Girault who provided a proof of a particular case of inequality \eqref{inverseH^1-H^s}. 
\bibliography{art17.bbl}

\begin{thebibliography}{10}

\bibitem{Adams_Fournier}
{\sc Adams, R. A.; Fournier, J. J. F.}
\newblock {\em Sobolev spaces}, volume 140 of {\em Pure and Applied Mathematics
  (Amsterdam)}.
\newblock Elsevier/Academic Press, Amsterdam, second edition, 2003.

\bibitem{Badia2012}
{\sc Badia, S.}
\newblock On stabilized finite element methods based on the {Scott}-{Zhang}
  projector. {C}ircumventing the inf-sup condition for the {S}tokes problem.
\newblock {\em Computer Methods in Applied Mechanics and Engineering},
  247-248(0):65--72, 2012.

\bibitem{Badia-Codina-Gutierrez}
{\sc Badia, S.; Codina, R.; Guti{\'e}rrez-Santacreu, J. V.}
\newblock Long-term stability estimates and existence of a global attractor in
  a finite element approximation of the {N}avier-{S}tokes equations with
  numerical subgrid scale modeling.
\newblock {\em SIAM J. Numer. Anal.}, 48(3):1013--1037, 2010.

\bibitem{Badia-Gutierrez}
{\sc Badia, S; Guti{\'e}rrez-Santacreu, J. V.}
\newblock Convergence towards weak solutions of the {N}avier-{S}tokes equations
  for a finite element approximation with numerical subgrid-scale modelling.
\newblock {\em IMA J. Numer. Anal.}, 34(3):1193--1221, 2014.

\bibitem{becker_finite_2001}
{\sc Becker, R.; Braack, M}.
\newblock A finite element pressure gradient stabilization for the {Stokes}
  equations based on local projections.
\newblock {\em Calcolo}, 38(4):173--199, 2001.

\bibitem{Bertoluzza_1999}
{Bertoluzza, S.}
\newblock The discrete commutator property of approximation spaces.
\newblock {\em C. R. Acad. Sci. Paris S\'er. I Math.}, 329(12):1097--1102,
  1999.

\bibitem{Boris1992}
{\sc Boris, J. P.; Grinstein, F. F.; Oran, E. S.; Kolbe, R. L.}
\newblock New insights into large eddy simulation.
\newblock {\em Fluid Dynamics Research}, 10(4--6):199, 1992.

\bibitem{Brenner-Scott}
{\sc Brenner, S. C.; Scott, L. R.}
\newblock {\em The mathematical theory of finite element methods}, volume~15 of
  {\em Texts in Applied Mathematics}.
\newblock Springer, New York, third edition, 2008.

\bibitem{Caffarelli-Kohn-Nirenberg}
{\sc Caffarelli, L.; Kohn, R.; Nirenberg, L.}
\newblock Partial regularity of suitable weak solutions of the
  {N}avier-{S}tokes equations.
\newblock {\em Comm. Pure Appl. Math.}, 35(6):771--831, 1982.

\bibitem{codina_analysis_2008}
{\sc Codina, R.}
\newblock Analysis of a stabilized finite element approximation of the {O}seen
  equations using orthogonal subscales.
\newblock {\em Applied Numerical Mathematics}, 58(3):264--283, 2008.

\bibitem{codinab5}
{\sc Codina, R.; Blasco, J.}
\newblock Stabilized finite element method for the transient {Navier-Stokes}
  equations based on a pressure gradient projection.
\newblock {\em Computer Methods in Applied Mechanics and Engineering},
  182:277--300, 2000.

\bibitem{codina_time_2007}
{\sc Codina, R.; Principe, J.; Guasch, O. Badia, S.}
\newblock Time dependent subscales in the stabilized finite element
  approximation of incompressible flow problems.
\newblock {\em Computer Methods in Applied Mechanics and Engineering},
  196(21--24):2413--2430, 2007.

\bibitem{colomes_assessment_2015}
{\sc Colom{\'e}s, O; Badia, S.; Codina, R.; Principe, J.}
\newblock Assessment of variational multiscale models for the large eddy
  simulation of turbulent incompressible flows.
\newblock {\em Computer Methods in Applied Mechanics and Engineering},
  285:32--63, March 2015.

\bibitem{Dauge_1989}
{\sc Dauge, M.}
\newblock Stationary {S}tokes and {N}avier-{S}tokes systems on two- or
  three-dimensional domains with corners. {I}. {L}inearized equations.
\newblock {\em SIAM J. Math. Anal.}, 20(1):74--97, 1989.

\bibitem{Ern-Guermond_2004}
{\sc Ern, A.; and Guermond, J.-L.}
\newblock {\em Theory and practice of finite elements}, volume 159 of {\em
  Applied Mathematical Sciences}.
\newblock Springer-Verlag, New York, 2004.

\bibitem{Girault-Raviart_1979}
{\sc Girault, V.; Raviart, P.-A.}
\newblock {\em Finite element approximation of the {N}avier-{S}tokes
  equations}, volume 749 of {\em Lecture Notes in Mathematics}.
\newblock Springer-Verlag, Berlin-New York, 1979.

\bibitem{Girault-Raviart_1986}
{\sc Girault, V.; Raviart, P.-A.}
\newblock {\em Finite element methods for {N}avier-{S}tokes equations},
  volume~5 of {\em Springer Series in Computational Mathematics}.
\newblock Springer-Verlag, Berlin, 1986.
\newblock Theory and algorithms.

\bibitem{Grinstein2007}
{\sc Grinstein, F. F.; Margollin, L. G.; Rider, W. J.}
\newblock {\em Implicit large eddy simulation: computing turbulent fluid
  dynamics}.
\newblock Cambridge university press, 2007.

\bibitem{Grisvard_1985}
{\sc Grisvard, P.}
\newblock {\em Elliptic problems in nonsmooth domains}, volume~24 of {\em
  Monographs and Studies in Mathematics}.
\newblock Pitman (Advanced Publishing Program), Boston, MA, 1985.

\bibitem{guasch_statistical_2013}
{\sc Guasch, O.; Codina, R.}
\newblock Statistical behavior of the orthogonal subgrid scale stabilization
  terms in the finite element large eddy simulation of turbulent flows.
\newblock {\em Computer Methods in Applied Mechanics and Engineering},
  261--262:154--166, July 2013.

\bibitem{guermond_stabilization_1999}
{\sc Guermond, J.-L.}
\newblock Stabilization of {Galerkin} approximations of transport equations by
  subgrid modeling.
\newblock {\em ESAIM: Mathematical Modelling and Numerical Analysis -
  Mod{\'e}lisation Math{\'e}matique et Analyse Num{\'e}rique},
  33(6):1293--1316, 1999.

\bibitem{guermond_finite-element-based_2006}
{\sc Guermond, J.-L.}
\newblock Finite-element-based {Faedo}--{Galerkin} weak solutions to the
  {Navier}--{Stokes} equations in the three-dimensional torus are suitable.
\newblock {\em Journal de Math{\'e}matiques Pures et Appliqu{\'e}es},
  85(3):451--464, March 2006.

\bibitem{Guermond_2007}
{\sc Guermond, J.-L.}
\newblock Faedo-{G}alerkin weak solutions of the {N}avier-{S}tokes equations
  with {D}irichlet boundary conditions are suitable.
\newblock {\em J. Math. Pures Appl. (9)}, 88(1):87--106, 2007.

\bibitem{guermond_use_2008}
{\sc Guermond, J.-L.}
\newblock On the use of the notion of suitable weak solutions in {CFD}.
\newblock {\em International Journal for Numerical Methods in Fluids},
  57(9):1153--1170, July 2008.

\bibitem{Guermond_2009}
{\sc Guermond, J.-L.}
\newblock The {LBB} condition in fractional {S}obolev spaces and applications.
\newblock {\em IMA J. Numer. Anal.}, 29(3):790--805, 2009.

\bibitem{Guermond-Pasciak_2008}
{\sc Guermond, J.-L.; Pasciak, J. E.}
\newblock Stability of discrete {S}tokes operators in fractional {S}obolev
  spaces.
\newblock {\em J. Math. Fluid Mech.}, 10(4):588--610, 2008.

\bibitem{Hopf}
{\sc Hopf, E.}
\newblock \"{U}ber die {A}nfangswertaufgabe f\"ur die hydrodynamischen
  {G}rundgleichungen.
\newblock {\em Math. Nachr.}, 4:213--231, 1951.

\bibitem{hughes_variational_1998}
{\sc Hughes, T. J. R.; Feij\'oo, G. R.; Mazzei, L.; Quincy, J.-B.}
\newblock The variational multiscale method - {A} paradigm for computational
  mechanics.
\newblock {\em Computer Methods in Applied Mechanics and Engineering},
  166(1--2):3--24, 1998.

\bibitem{Hughes2000}
{\sc Hughes, T. J. R.; Mazzei, L.; Jansen, K. E.}
\newblock Large eddy simulation and the variational multiscale method.
\newblock {\em Computing and Visualization in Science}, 3:47--59, 2000.

\bibitem{Kellogg-Osborn_1976}
{\sc Kellogg, R. B.; Osborn, J. E.}
\newblock A regularity result for the {S}tokes problem in a convex polygon.
\newblock {\em J. Functional Analysis}, 21(4):397--431, 1976.

\bibitem{Leray}
{\sc Leray, J.}
\newblock Sur le mouvement d'un liquide visqueux emplissant l'espace.
\newblock {\em Acta Math.}, 63(1):193--248, 1934.

\bibitem{Lin_1998}
{\sc Lin, F. H.}
\newblock A new proof of the {C}affarelli-{K}ohn-{N}irenberg theorem.
\newblock {\em Comm. Pure Appl. Math.}, 51(3):241--257, 1998.

\bibitem{Lions_1969}
{\sc Lions, J.-L.}
\newblock {\em Quelques m\'ethodes de r\'esolution des probl\`emes aux limites
  non lin\'eaires}.
\newblock Dunod; Gauthier-Villars, Paris, 1969.

\bibitem{Lions_Magenes_1968}
{\sc Lions, J.-L.; Magenes, E.}
\newblock {\em Probl\`emes aux limites non homog\`enes et applications. {V}ol.
  1}.
\newblock Travaux et Recherches Math\'ematiques, No. 17. Dunod, Paris, 1968.

\bibitem{Lions-Magenes_1972}
{\sc Lions, J.-L.; Magenes, E.}
\newblock {\em Non-homogeneous boundary value problems and applications. {V}ol.
  {I}}.
\newblock Springer-Verlag, New York-Heidelberg, 1972.
\newblock Translated from the French by P. Kenneth, Die Grundlehren der
  mathematischen Wissenschaften, Band 181.

\bibitem{Scheffer_1977}
{\sc Scheffer, V.}
\newblock Hausdorff measure and the {N}avier-{S}tokes equations.
\newblock {\em Comm. Math. Phys.}, 55(2):97--112, 1977.

\bibitem{Simon_1990}
{\sc Simon, J.}
\newblock Nonhomogeneous viscous incompressible fluids: existence of velocity,
  density, and pressure.
\newblock {\em SIAM J. Math. Anal.}, 21(5):1093--1117, 1990.

\bibitem{Temam_2001}
{\sc Temam, R.}
\newblock {\em Navier-{S}tokes equations}.
\newblock AMS Chelsea Publishing, Providence, RI, 2001.
\newblock Theory and numerical analysis, Reprint of the 1984 edition.

\end{thebibliography}
\bibliographystyle{plain}
\nocite{*}
\end{document}